\newtheorem{thm}{Theorem}[section]
\newtheorem {asp}{Assumption}[section]
\newtheorem{lem}{Lemma}[section]
\newtheorem{prop}{Proposition}[section]
\newtheorem{deff}{Definition}[section]
\theoremstyle{definition}
\theoremstyle{remark}
\newtheorem{example}{Example}
\numberwithin{equation}{section}
\newcommand{\eps}{\varepsilon}
\newcommand{\M}{\mathcal{M}}
\newcommand{\F}{\mathcal{F}}
\newcommand{\E}{\mathbb{E}}
\newcommand{\N}{\mathbb{N}}
\newcommand{\PP}{\mathbb{P}}
\newcommand{\R}{\mathbb{R}}
\numberwithin{equation}{section}
\newcommand{\1}{\boldsymbol{1}}
\newcommand{\bed}{\begin{displaymath}}
\newcommand{\eed}{\end{displaymath}}
\newcommand{\bea}{\bed\begin{array}{rl}}
\newcommand{\eea}{\end{array}\eed}
\newcommand{\ad}{&\!\!\!\disp}
\newcommand{\aad}{&\disp}
\newcommand{\barray}{\begin{array}{ll}}
\newcommand{\earray}{\end{array}}
\def\disp{\displaystyle}
\def\bar{\overline}
\def\hat{\widehat}
\def\a.s{\text{\;a.s.\;}}
\begin{document}
\title{Coexistence and Exclusion of
Stochastic Competitive Lotka-Volterra Models\thanks{This
research was supported in part by the National Science Foundation under grant DMS-1207667.}}
\author{Dang Hai Nguyen\thanks{Department of Mathematics, Wayne State University, Detroit, MI
48202, 
dangnh.maths@gmail.com.} \and
George Yin\thanks{Department of Mathematics, Wayne State University, Detroit, MI
48202,
gyin@math.wayne.edu.}}

\maketitle

\begin{abstract} This work derives sufficient conditions
for the coexistence and exclusion of a stochastic competitive Lotka-Volterra model.
The conditions obtained are close to necessary. In addition,
convergence in distribution of positive solutions of the model is also established.
A number of numerical examples are given to illustrate our results.

\bigskip
\noindent {\bf Keywords.} Ergodicity; coexistence;  exclusion; Lotka-Volterra, competition;
stationary distribution.

\bigskip
\noindent{\bf Subject Classification.} 34C12, 60H10, 92D25.

\end{abstract}

\newpage




\section{Introduction}\label{sec:int}
Cooperation, predator-prey, and competition are three  main interactions
among species in eco-systems. Among them, competition is one of the
most popular interactions.
 Such interactions occur when two or more species compete for the same
 resource such as food, shelter, nesting sites, etc.
Due to competition, the growth of a species is
depressed in the presence of others.
Traditionally,
competitive interactions are modeled by systems of ordinary differential equations
known as the Lotka-Volterra models.
For instance, a competitive Lotka-Volterra model for two species takes the form
\begin{equation}\label{e1.0}
\begin{cases}
dx(t)=x(t)\big(a_1-b_1x(t)-c_1y(t)\big)dt \\
dy(t)=y(t)\big(a_2-b_2y(t)-c_2x(t)\big)dt,
\end{cases}
\end{equation}
where $x(t)$ and  $y(t)$ represent the densities of the two species at time $t$, $a_1$, and $a_2>0$ are intrinsic growth rates, and $b_1$ and $b_2>0$ are  intra-specific competition rates while $c_1$ and $c_2>0$ represent the inter-specific competition.
An important question regarding the competitive interaction is whether the species co-exist or a competitive exclusion occurs.
This question has been addressed fully for
the deterministic
model given by \eqref{e1.0}.
We state
a result whose proof can be found in \cite{HS} or \cite{Mu}.

\begin{prop}\label{thm1}
Let $\lambda_1:=a_2-c_2\dfrac{a_1}{b_1}$ and $\lambda_2:=a_1-c_1\dfrac{a_2}{b_2}$.
\begin{itemize}
\item[{\rm (i)}] If $\lambda_1>0$ and $\lambda_2>0$, all positive solutions $(x(t), y(t))$ to \eqref{e1.0} converge to the unique positive equilibrium
$\left(\dfrac{a_1c_2-a_2b_1}{c_1c_2-b_1b_2}, \dfrac{a_2c_1-a_1b_2}{c_1c_2-b_1b_2}\right)$.
\item[{\rm (ii)}] If $\lambda_1>0$ and $\lambda_2<0$, all positive solutions $(x(t), y(t))$  converge to $(0, \dfrac{a_2}{b_2})$.
\item[{\rm (iii)}] If $\lambda_1<0$ and $\lambda_2>0$, all positive solutions $(x(t), y(t))$  converge to $\left(\dfrac{a_1}{b_1}, 0\right)$.
\item[{\rm (iv)}] If $\lambda_1<0$ and $\lambda_2<0$, there is an unstable manifold $($called the separatrix$)$ splitting the interior of the positive quadrant $\R^{2,\circ}_+$ into two regions. Solutions above the separatrix  converge to $\left(0, \dfrac{a_2}{b_2}\right)$, while solutions below the separatrix tend to $\left(\dfrac{a_1}{b_1}, 0\right)$.
\end{itemize}
\end{prop}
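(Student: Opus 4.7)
The plan is to carry out a classical phase-plane analysis of the planar competitive system \eqref{e1.0} in the closed positive quadrant $\R^2_+$, combining a linearization at each equilibrium with Dulac's criterion to rule out periodic orbits and then invoking the Poincar\'e--Bendixson theorem. First I would note the obvious invariance and boundedness features: both axes and the open quadrant $\R^{2,\circ}_+$ are positively invariant, and since $\dot x \le x(a_1-b_1x)$ and $\dot y\le y(a_2-b_2y)$, every positive trajectory eventually enters a compact rectangle. On $\{y=0\}$ the flow reduces to the scalar logistic equation with limit $a_1/b_1$, and analogously on $\{x=0\}$.

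Next I would enumerate the equilibria and their local behaviour. Besides $(0,0)$, $E_x:=(a_1/b_1,0)$, and $E_y:=(0,a_2/b_2)$, a direct substitution produces an interior equilibrium $E^{*}=\bigl((a_1c_2-a_2b_1)/(c_1c_2-b_1b_2),\,(a_2c_1-a_1b_2)/(c_1c_2-b_1b_2)\bigr)$ which lies in $\R^{2,\circ}_+$ precisely in cases (i) and (iv). The Jacobian at $E_x$ has eigenvalues $-a_1$ and $\lambda_1$, and at $E_y$ it has eigenvalues $-a_2$ and $\lambda_2$, so each boundary equilibrium is asymptotically stable exactly when the corresponding $\lambda_i<0$ and is a saddle otherwise. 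At $E^{*}$ the Jacobian has negative trace $-b_1x^{*}-b_2y^{*}$ and determinant $(b_1b_2-c_1c_2)x^{*}y^{*}$; in case (i) one checks $b_1b_2>c_1c_2$, making $E^{*}$ a sink, while in case (iv) $c_1c_2>b_1b_2$, making $E^{*}$ a saddle.

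To eliminate periodic orbits I would apply Dulac's criterion on $\R^{2,\circ}_+$ with weight $1/(xy)$: a direct computation yields
\[
\frac{\partial}{\partial x}\!\left(\frac{a_1-b_1x-c_1y}{y}\right)+\frac{\partial}{\partial y}\!\left(\frac{a_2-b_2y-c_2x}{x}\right)=-\frac{b_1}{y}-\frac{b_2}{x}<0,
\]
which rules out closed orbits and heteroclinic loops in $\R^{2,\circ}_+$. Combined with ultimate boundedness, the Poincar\'e--Bendixson theorem forces every $\omega$-limit set inside $\R^{2,\circ}_+$ to be a single equilibrium. In cases (i), (ii), (iii) there is exactly one locally asymptotically stable equilibrium (namely $E^{*}$, $E_y$, $E_x$, respectively), while every other equilibrium is either a saddle whose stable manifold lies on a coordinate axis or the unstable source $(0,0)$; a connecting-orbit argument then forces every positive trajectory to converge to the unique sink.

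The hard part is case (iv), where both $E_x$ and $E_y$ are sinks and $E^{*}$ is an interior saddle. Here I would exploit the fact that \eqref{e1.0} is strongly competitive on $\R^{2,\circ}_+$ (since $\partial_y\dot x<0$ and $\partial_x\dot y<0$), which by Hirsch's theorem for planar monotone flows forces the stable manifold $W^{s}(E^{*})$ to be a single smooth curve, monotone with respect to the competitive partial order, and whose closure in $\R^2_+$ joins the origin to $E^{*}$ and extends to infinity inside $\R^{2,\circ}_+$. Any point strictly above $W^{s}(E^{*})$ in the competitive order dominates a point on an orbit converging to $E_y$, so monotonicity plus Poincar\'e--Bendixson gives $\omega$-convergence to $E_y$, and symmetrically below. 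The delicate step is verifying that $W^{s}(E^{*})$ is globally a monotone graph separating $\R^{2,\circ}_+$ into exactly two invariant basins (rather than bending back on itself or accumulating on the boundary away from the origin); this is precisely where the competitive structure, rather than merely local hyperbolicity, is indispensable.
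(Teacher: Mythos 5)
The paper offers no proof of Proposition \ref{thm1} at all: it is stated as a classical fact with a pointer to \cite{HS} and \cite{Mu}, so there is nothing in-paper to compare your argument against; what you have written is essentially the standard textbook proof from those references, and it is correct in substance. The computations check out: the eigenvalues at $(a_1/b_1,0)$ are $-a_1$ and $\lambda_1$, at $(0,a_2/b_2)$ they are $-a_2$ and $\lambda_2$, the interior equilibrium has trace $-b_1x^*-b_2y^*<0$ and determinant $(b_1b_2-c_1c_2)x^*y^*$, the Dulac function $1/(xy)$ gives divergence $-b_1/y-b_2/x<0$, and dissipativity plus Poincar\'e--Bendixson then reduces cases (i)--(iii) to identifying the unique sink, exactly as you argue. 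Two remarks. First, the interior equilibrium as displayed (copied from the paper's statement) has its coordinates interchanged: solving $b_1x+c_1y=a_1$, $c_2x+b_2y=a_2$ gives $x^*=(a_2c_1-a_1b_2)/(c_1c_2-b_1b_2)$ and $y^*=(a_1c_2-a_2b_1)/(c_1c_2-b_1b_2)$; this is a typo inherited from the statement, but the ``direct substitution'' you invoke would have caught it. Second, in case (iv) you correctly isolate the only genuinely delicate point --- that $W^s(E^*)\cup\{(0,0)\}$ is a single curve running from the origin through $E^*$ to infinity whose complement in $\R^{2,\circ}_+$ has exactly two connected components --- but you do not prove it; as written this step is an appeal to Hirsch's theory of planar competitive flows rather than an argument. (Minor terminology: the separatrix is \emph{unordered} with respect to the competitive cone, equivalently monotone for the componentwise order, not ``monotone with respect to the competitive partial order.'') Once that topological fact is granted, your conclusion follows cleanly, since each of the two components is connected and is a union of the open basins of the two boundary sinks, every interior orbit converges to some equilibrium, $(0,0)$ is a repeller, and only points of $W^s(E^*)$ converge to $E^*$; so the remaining gap is citation-sized and matches the level of detail at which \cite{HS} itself treats this case.
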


Proposition \ref{thm1} indicates that in case (i), the interspecific competition is not too strong, so the two species coexist.
For the rest of the cases, the competitive exclusion  takes place.
In particular, in case (iv), one population with starting advantage (i.e., its initial density is sufficiently larger
than that of the other)
will eventually win, while the other will
be extinct. In addition,
In case (ii)
or (iii), one species always dominates the other.

 In the past decade, besides deterministic models, stochastic ecology models have gained increasing attention to depict more realistically
 eco-systems. The main thoughts are that such systems are often subject to environmental noise.
 Various types of
environmental noises have been considered. General Lotka-Volterra models perturbed by white noise have been studied in \cite{DS, JLO, Mx, MMR, MSR}, while the authors in \cite{LJM, LM, ZY1, ZY2} go further by considering the effect of both white and colored noises to the Lotka-Volterra models. Assuming that the population may suffer sudden environmental shocks, e.g., earthquakes, hurricanes, epidemics, etc, Bao et. al. in \cite{BMYY} consider competitive  system with jumps.
Meanwhile, Tran and Yin \cite{
TY2} use a Wonham filter to deal with a regime-switching Lotka-Volterra model in which the switching is
a hidden Markov chain.
In the aforementioned papers, some nice estimates on moment and pathwise asymptotic behaviors have been given. Some efforts have also been devoted
to providing conditions for
permanence and extinction of the species as well as the existence of stationary distribution.	
Nevertheless,
no
conditions as sharp as
their deterministic counterpart
(cf. Proposition \ref{thm1}) have been obtained.
Motivated by the needs, this paper aims to provide the classification for a stochastic competitive model that is similar to
Proposition \ref{thm1}.
Suppose that the coefficients of \eqref{e1.0} are subject to random noise that can be represented
by Brownian motions, the model becomes
\begin{equation}\label{e1.1}
\begin{cases}
dX(t)=X(t)\big(a_1-b_1X(t)-c_1Y(t)\big)dt+(\alpha_1X^2(t)+\gamma_1X(t))dB_1(t)+\beta_1 X(t)Y(t)dB_2(t), \\
dY(t)=Y(t)\big(a_2-b_2Y(t)-c_2X(t)\big)dt+(\alpha_2Y^2(t)+\gamma_2Y(t))dB_3(t)+\beta_2 X(t)Y(t)dB_2(t),
\end{cases}
\end{equation}
where $B_1(\cdot)$, $B_2(\cdot)$, and $B_3(\cdot)$ are independent Brownian motions.
To reduce unnecessary computations due to notational complexity and to make our ideas more understandable but still
 preserve important properties, we assume that the lowest-power terms are not affected by environment noise
 for simplicity, that is, $\gamma_1=\gamma_2=0$. Thus, the following model will be considered throughout the rest of the paper:
\begin{equation}\label{e1.2}
\begin{cases}
dX(t)=X(t)\big(a_1-b_1X(t)-c_1Y(t)\big)dt+\alpha_1X^2(t)dB_1(t)+\beta_1 X(t)Y(t)dB_2(t), \\
dY(t)=Y(t)\big(a_2-b_2Y(t)-c_2X(t)\big)dt+\alpha_2Y^2(t)dB_3(t)+\beta_2 X(t)Y(t)dB_2(t).
\end{cases}
\end{equation}
Similar to the deterministic case, we introduce two values $\lambda_1,\lambda_2$ that can be
 considered as threshold values and that can be
 calculated from the coefficients.
We show that if both of them are positive, the
coexistence takes place and all positive solutions to \eqref{e1.2} converge to a unique invariant probability measure in total variation norm.
If one of the quantities is positive and the other is negative, then one species will
dominate, the other will die out.
In
case both values are negative, each species will die out with a positive probability.
Another distinctive contribution of this paper is the demonstration of  link of the threshold values and the Lyapunov exponents.
 We
 demonstrate that when $Y(t)$ or $X(t)$ converge to $0$, their Lyapunov exponents are
 precisely $\lambda_1$ and $\lambda_2$, respectively.
It should be mentioned that some
 related results have been obtained for stochastic Lotka-Volterra models of predator-prey type; see \cite{DDT, RR}.
However, the methods used in \cite{DDT, RR} are not applicable to our
model
 for two reasons.
First, relying on the basic principle that the predator will die out without prey, there is only one threshold value determining whether the predator
 will be extinct or permanent. In contrast, our model requires to examine two values arising from the behavior of solutions
  leading to much more difficulty. Second, in \cite{DDT, RR}, the inter-specific terms were assumed not
 to
 subject to random noise so that the solutions in $\R^{2,\circ}_+$, the interior of $\R^2_+$, can be compared easily to the solutions on the boundary.
It is not the case for our model. Some new techniques will therefore be introduced to overcome the difficulty.
Moreover, it can be seen in our proofs  that similar results can be obtained for the general model \eqref{e1.1} using our new method.

To proceed,
the rest of the paper is arranged as follows.
We present our main results and provide some numerical examples demonstrating our findings in Section \ref{sec:mai}.
Because the proofs are quite technical,
Sections \ref{sec:coe} and  \ref{sec:com} are devoted to the proofs for the coexistence and the exclusion cases, respectively.
In Section \ref{sec:pie}, we treat a Kolmogorov system of competitive type under telegraph noise.
That section complements our earlier results in \cite{DDY1}.
We conclude with discussion on model \eqref{e1.1} and its variants.

\section{Main Results}\label{sec:mai}
Let $(\Omega,\F,\{\F_t\}_{t\geq0},\PP)$ be a complete filtered probability space with the filtration $\{\F_t\}_{t\geq 0}$ satisfying the usual condition,  i.e., it is increasing and right continuous while $\F_0$ contains all $\PP$-null sets.
We consider
model \eqref{e1.2},
where
 $B_1(t)$, $B_2(t)$, and $B_3(t)$ are three $\F_t$-adapted, mutually independent Brownian motions.
We suppose that $a_i, b_i, c_i$ are positive constants for $i=1,2$.
We also suppose that $\alpha_i\ne 0$, $i=1,2$ so that the diffusion is non-degenerate.
The degenerate case will be discussed later.
Throughout this paper, to simplify the notation, we denote $z=(x,y), z_0=(x_0, y_0)$, and $Z(t)=(X(t), Y(t)).$
We also denote $a\wedge b=\min\{a,b\}$, $a\vee b=\max\{a,b\}$, and $\R^{2,\circ}_+=\{(x,y): x>0,y>0\}$.
Let $Z_z(t)=(X_z(t), Y_z(t))$ be the solution to \eqref{e1.2} with initial value $z$.
It is proved in \cite{MMR} that if $z\in\R^{2,\circ}_+$, $Z_z(t)$ remains in $\R^{2,\circ}_+$ with probability 1.
Moreover, the solution $Z(t)$ is a strong homogeneous Markov process. We state some important properties of the solution whose proof can be found in \cite{MMR, MSR, LJM}.

\begin{prop}\label{prop2.1}
The following assertions hold:
\begin{itemize}
\item[{\rm(i)}]
There is an $M_0>0$ such that $$\limsup\limits_{t\to\infty} \E V
(X_{z }(t),Y_{z }(t))\leq M_0\,\forall z\in\R^2_+\setminus\{(0,0)\}$$ where $V(x,y)=(x+y)^{-1}+(x+y).$
  \item[{\rm (ii)}] For any  $\eps>0$, $H>1$, $T>0$, there is an $\bar H=\bar H(\eps, H, T)>1$ such that
  $$\PP\left\{\bar H^{-1}\leq X_z(t)\leq \bar H\,\forall t\in[0,T]\right\}\geq1-\eps \ \hbox{ if } \ z\in[H^{-1},H]\times[0,H]$$ and that
$$\PP\left\{\bar H^{-1}\leq Y_z(t)\leq \bar H\,\forall t\in[0,T]\right\}\geq1-\eps \ \hbox{ if } \ z\in[0,H]\times[H^{-1},H].$$
\item[{\rm (iii)}] For any $p\in(0,3)$, there is an $M_{p}>0$ such that $$\E\int_0^t\|Z_{z }(s)\|^{p}\leq  M_{p}(t+\|z \|)\,\forall z \in\R^2_+, t\geq 0.$$
\end{itemize}
\end{prop}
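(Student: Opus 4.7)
The plan is to establish all three assertions by applying It\^o's formula to carefully chosen auxiliary functions of $Z(t)$, exploiting the dissipative structure supplied by the competition terms $-b_iX_i^2$ to absorb the growth coming from the quadratic multiplicative noise.

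For (i), the natural Lyapunov function is $V(x,y)=(x+y)+(x+y)^{-1}$, which penalizes both $x+y\to\infty$ and $x+y\to 0^+$. Writing $S=X+Y$, the drift of $S$ is $a_1X+a_2Y-b_1X^2-b_2Y^2-(c_1+c_2)XY$ and $d\langle S\rangle/dt\leq CS^4$. I would check that at infinity the $-b_{\min}S^2$ drift dominates and the It\^o correction $S^{-3}\cdot S^4=S$ is subdominant, while near $S=0$ the term $-S^{-2}(a_1X+a_2Y)\sim -S^{-1}$ drives $\op V\to-\infty$. Collecting these estimates gives $\op V\leq K-\eta V$ globally, and Gronwall's inequality then yields $\E V(Z(t))\leq V(z)e^{-\eta t}+K/\eta$, whence the $\limsup$ bound.

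For (iii), I would apply It\^o's formula to $X^p$ with $p\in(0,1)$: concavity $p(p-1)<0$ flips the sign of the It\^o correction $\tfrac12 p(p-1)\alpha_1^2X^{p+2}$, producing the bound $\op X^p\leq K_p-c_pX^{p+2}$. Dynkin's formula then gives $\E\int_0^tX_z(s)^{p+2}\,ds\leq c_p^{-1}(x^p+K_pt)$, which, together with the symmetric estimate for $Y$, covers the range of exponents $(2,3)$. The remaining range $(0,2]$ is handled by the elementary estimate $\E\int_0^t(X+Y)^2\,ds\leq C(\|z\|+t)$ derived by taking $\phi=x+y$ (which has no It\^o correction and directly inherits the competitive dissipativity), followed by a Young-type interpolation.

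For (ii), I would apply It\^o's formula to $\log X_z(t)$, obtaining an SDE whose drift and diffusion are polynomial in $X,Y$. On the stopped event where $(X,Y)$ remains in $[\bar H^{-1},\bar H]\times[0,\bar H]$, those coefficients are uniformly bounded, so Doob's martingale inequality controls $\sup_{[0,T]}|\log X_z(t)-\log x|$. The risk that $Y$ exits $[0,\bar H]$ is ruled out with high probability via Markov's inequality together with the moment bound from (iii); a standard localization via stopping times then closes the circular dependency, and the statement for $Y$ follows by symmetry.

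The main obstacle I anticipate is matching polynomial degrees: the quadratic multiplicative noise $\alpha_iX_i^2\,dB$ raises the effective order of It\^o corrections by two relative to the deterministic drift, so in both (i) and (iii) the Lyapunov function must supply enough cancellation (through the negative weighting $S^{-3}$ in (i), respectively through concavity with exponent $p<1$ in (iii)) to restore dissipativity, and the upper endpoint $p<3$ is exactly the threshold at which this concavity trick ceases to produce a usable bound. Verifying the estimates globally also requires partitioning $\R^{2,\circ}_+$ into regions according to whether $X$ or $Y$ dominates and carefully absorbing cross-terms such as $X^pY^2$ with Young's inequality.
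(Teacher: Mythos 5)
The paper does not actually prove Proposition 2.1: it is quoted as a collection of known facts with the proof deferred to \cite{MMR, MSR, LJM}, so your self-contained Lyapunov argument supplies more than the paper does, and for parts (i) and (iii) it is correct and essentially the standard argument of those references. For (i), $\op V\leq K-\eta V$ does hold: at infinity the dissipative drift $-b_{\min}S^2$ beats the It\^o correction $S^{-3}\cdot O(S^4)=O(S)$, near $S=0$ the term $-S^{-2}(a_1X+a_2Y)\sim -S^{-1}$ dominates the bounded contributions, and Gronwall after the usual localization finishes. For (iii), concavity of $x\mapsto x^p$ with $p\in(0,1)$ flips the sign of $\tfrac12p(p-1)\alpha_1^2X^{p+2}$, Dynkin's formula gives the integrated moments of order $p+2\in(2,3)$, and $S=X+Y$ covers $(0,2]$; your diagnosis that $p<3$ is exactly where the trick degenerates (the factor $p(p-1)\to 0$ as $p\to1^-$) is right.

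The one genuine gap is in (ii), in the step where you rule out $Y$ exiting $[0,\bar H]$ ``via Markov's inequality together with the moment bound from (iii).'' Part (iii) controls the time-integrated quantity $\E\int_0^T\|Z(s)\|^p\,ds$, and no Markov inequality applied to it bounds $\sup_{t\in[0,T]}Y_z(t)$: the process can spike above any level on a time set of arbitrarily small measure without changing the integral. You need a maximal inequality for the running supremum, and the fix is already contained in your part (i): since $\op V\leq K$, the stopped process $V(Z(t\wedge\tau))-Kt$ is a supermartingale, so $\PP\{\sup_{t\leq T}V(Z(t))\geq R\}\leq (V(z)+KT)/R$, which is uniform over $z\in[H^{-1},H]\times[0,H]$ because $V(z)\leq 3H$ there; this simultaneously yields the upper bounds on $X$ and $Y$ and a lower bound on $X+Y$. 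On that event your $\log X$ computation (stopped at the first exit from $\{V\leq R\}$, with Doob or the exponential martingale inequality applied to the stochastic integral, whose quadratic variation is then at most $CR^2T$) delivers the two-sided bound on $X$, and the $Y$ statement follows by symmetry. So the architecture of (ii) is sound, but the cited tool for the containment step is the wrong one and must be replaced by a supermartingale maximal inequality.
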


To take an
in-depth
study,  we first consider the equation on the boundary.
On the $x$-axis, we have
\begin{equation}\label{e2.1}
d\varphi (t)=\varphi (t)\big(a_1-b_1\varphi (t)\big)dt+\alpha_1\varphi^2 (t)dB_1(t).
\end{equation}
This
diffusion has a unique invariant probability measure $\pi^*_1$ in $(0,\infty)$ with density
$$f^*_1(\phi)=\dfrac{c^*_1}{\phi^4}\exp\left(\dfrac{2b_1}{\alpha_1^2}\dfrac1\phi-\dfrac{a_1}{\alpha_1^2}\dfrac1{\phi^2}\right), \phi>0$$
where $c^*_1$ is the normalizing constant.
We refer to \cite{DDT} for the proof and the expression of $c^*_1$.
By the ergodicity (see \cite[Theorem 3.16]{AS}),
for any measurable function $h(\cdot):\R_+\to\R$ satisfying that $\int_0^\infty |h(\phi)|f^*_1(\phi)d\phi<\infty$, we have
\begin{equation}\label{erg}
\PP\left\{\lim\limits_{T\to\infty}\dfrac1T\int_0^Th(\varphi_{x}(t))dt=\int_0^\infty h(\phi)f^*_1(\phi)d\phi\right\}=1\,\forall x>0,
\end{equation}
where $\varphi_{x}$ is the solution to \eqref{e2.1} starting at $x$.
In particular, for any $p\in(-\infty,3)$,
\begin{equation}\label{e2.2}
\PP\left\{\lim\limits_{T\to\infty}\dfrac1T\int_0^T\varphi_{x}^p(t)dt=Q_p:=\int_0^\infty \phi^pf^*_1(\phi)d\phi<\infty\right\}=1\,\forall x>0.
\end{equation}
We define
\begin{equation}\label{e2.3}
\lambda_1=\int_0^\infty\left(a_2-c_2\phi-\dfrac{\beta_2^2}2\phi^2\right)f^*_1(\phi)d\phi=a_2-c_2Q_1-\dfrac{\beta_2^2}2Q_2.
\end{equation}
Similarly, considering
diffusion whose  equation on the $y$-axis is
$$d\psi (t)=\psi (t)\big(a_2-b_2\psi (t)\big)dt+\alpha_2\psi^2 (t)dB_3(t),$$
which has a unique invariant probability measure $\pi^*_2$. We can define
\begin{equation}\label{e2.4}
\lambda_2=\int_0^\infty\left(a_1-c_1\phi-\dfrac{\beta_1^2}2\phi^2\right)f^*_2(\phi)d\phi,
\end{equation}
where $f^*_2(\cdot)$ is the density function of $\pi^*_2$ given by
$$f^*_2(\phi)=\dfrac{c^*_2}{\phi^4}\exp\left(\dfrac{2b_2}{\alpha_2^2}\dfrac1\phi-\dfrac{a_2}{\alpha_2^2}\dfrac1{\phi^2}\right), \phi>0.$$
Let us
elaborate on the definition and use of  $\lambda_1$ and $\lambda_2$.
To determine whether  $Y_z(t)$ converges to 0 or not, we consider the Lyapunov exponent of $Y_z(t)$ when $Y_z(t)$ is small for a sufficiently long time. Hence, we look at the following equation which is derived from It\^o's formula.
\begin{equation}\label{e2.5}
\begin{aligned}
\dfrac{\ln Y_z(T)}T=\dfrac{\ln y}T&+\dfrac1T\int_0^T\left(a_2-b_2Y_z(t)-\dfrac{\alpha_2^2}2Y_z^2(t)-c_2X_z(t)-\dfrac{\beta_2^2}2X_z^2(t)\right)dt\\
&+\dfrac1T\int_0^T\Big(\alpha_2 Y_z(t)dB_3(t)+\beta_2X_z(t)dB_2(t)\Big).
\end{aligned}
\end{equation}
When $T$ is large, the first and third terms on the right-hand side of \eqref{e2.5} are small.
Intuitively, if $Y_z(t)$ is small in $[0,T]$, $X_z(t)$ is close to $\varphi_x(t)$.
Using the ergodicity, we see that $\dfrac{\ln y(T)}T$ is close to $\lambda_1$.
We here give the definitions of stochastic coexistence and competitive exclusion and then states our main results whose proofs are left to Sections \ref{sec:coe} and \ref{sec:com}.

\begin{deff} {\rm
The populations of two species modeled by \eqref{e1.2} are said to stochastically coexist if for any $\eps>0$, there is an $M=M(\eps)>1$ such that
$$\liminf\limits_{t\to\infty}\PP\left\{M^{-1}\leq X(t), Y(t)\leq M\right\}\geq 1-\eps.$$
The competitive exclusion is said to
take place
almost surely if $$\PP\left\{\lim\limits_{t\to\infty}X(t)=0 \text{ or }\lim\limits_{t\to\infty}Y(t)=0\right\}=1.$$
}\end{deff}

\begin{thm}\label{thm2.1}
If $\lambda_1$ and $\lambda_2$ are both positive, the two species coexist. Moreover, there is a unique invariant measure $\mu^*$ with support $\R^{2,\circ}_+$
of the solution process $Z(t)$ such that
\begin{itemize}
  \item[{\rm (i)}] the transition probability $P(t, z,\cdot)$ of $Z(t)$ converges in total variation to $\mu^*$  $\,\forall\,z\in \R^{2\circ}_+$;
  \item[{\rm (ii)}] for any $\mu^*$-integrable function $F(z):\R^{2,\circ}_+\to\R$, we have
$$\lim\limits_{t\to\infty}\dfrac1t\int_0^tF(Z_{z_0}(s))ds=\int_{\R^{2\circ}_+}F(z)\mu^*(dz)\text{ a.s. }\,\forall z_0\in \R^{2\circ}_+.$$
\end{itemize}
\end{thm}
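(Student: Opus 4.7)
The plan is to prove Theorem \ref{thm2.1} by constructing a Lyapunov function $U$ on $\R^{2,\circ}_+$ that is coercive both near the boundary and at infinity, establishing a Foster--Lyapunov drift condition of the form $LU\le -\kappa U + c\mathbf{1}_K$ for some compact $K\subset \R^{2,\circ}_+$, and then invoking the Meyn--Tweedie machinery for non-degenerate elliptic diffusions to deduce uniqueness of the invariant measure and geometric ergodicity in total variation.

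For coerciveness at infinity, the function $V(x,y)=(x+y)^{-1}+(x+y)$ from Proposition \ref{prop2.1}(i) already yields the bound we need. The new content is boundary repulsion, where the hypotheses $\lambda_1,\lambda_2>0$ enter. Applying It\^o's formula to $\ln Y_z(t)$ gives a drift equal to $a_2-b_2 Y -c_2 X -\frac{\alpha_2^2}{2} Y^2 -\frac{\beta_2^2}{2} X^2$. When $Y$ is small and $X$ is close to the boundary diffusion $\varphi_x$ on the $x$-axis, the ergodic identity \eqref{erg} together with the definition \eqref{e2.3} show that this drift, averaged in time, tends to $\lambda_1>0$. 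Hence for a small exponent $p_2>0$ the quantity $Y^{-p_2}$ has time-averaged negative drift near the $x$-axis, and symmetrically near the $y$-axis. The candidate Lyapunov function is therefore
\[
U(x,y) = \theta\bigl(x^{-p_1}+y^{-p_2}\bigr) + V(x,y)^{q}
\]
with small $\theta,p_1,p_2,q>0$. To convert the time-averaged negativity into a pointwise generator bound, I would either use a Khasminskii-type perturbation $y^{-p_2}\bigl(1+\eta g(x)\bigr)$ with $g$ tuned so that the generator action on $g$ cancels the non-constant part of the drift of $\ln Y$, or argue indirectly via a Bena{\"\i}m--Schreiber-type stochastic persistence argument, comparing any ergodic occupation measure of $Z(t)$ to the boundary invariant measures $\pi^*_1,\pi^*_2$ and ruling out boundary mass using $\lambda_1,\lambda_2>0$ together with Proposition \ref{prop2.1}(ii).

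Once the drift condition is in hand, the remaining structural properties needed for Meyn--Tweedie are the strong Feller property and $\mu^*$-irreducibility on $\R^{2,\circ}_+$. Strong Feller is immediate from uniform ellipticity on compact subsets of $\R^{2,\circ}_+$: the diffusion matrix has determinant $\alpha_1^2\alpha_2^2\, x^4y^4+\alpha_1^2\beta_2^2\, x^6y^2+\alpha_2^2\beta_1^2\, x^2 y^4$, which is strictly positive whenever $\alpha_1,\alpha_2\ne 0$ and $(x,y)\in \R^{2,\circ}_+$. Irreducibility follows from the Stroock--Varadhan support theorem by an explicit control-path construction. With these ingredients, the total variation convergence in (i) and the strong law (ii) are standard consequences of the Harris ergodic theorem. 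I expect the main obstacle to be the Lyapunov construction step, particularly near the corner $(0,0)$ where $X$ and $Y$ are simultaneously small and the one-dimensional boundary heuristic is less informative; this will likely force a careful tuning of the exponents $p_1,p_2,q$ and a separate treatment of the three boundary neighborhoods (near the $x$-axis, near the $y$-axis, and near the origin) before the estimates are glued together.
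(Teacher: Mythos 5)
Your overall strategy (Foster--Lyapunov drift plus Meyn--Tweedie) is a legitimate alternative in principle, but the proposal leaves its central step unexecuted, and that step is exactly where all the difficulty lives. The condition $\lambda_1>0$ is an \emph{average} of the drift $a_2-c_2\phi-\frac{\beta_2^2}{2}\phi^2$ of $\ln Y$ against the boundary invariant measure $\pi_1^*$; pointwise this drift is very negative for large $x$, so $y^{-p_2}$ alone cannot satisfy a generator inequality near the $x$-axis. You acknowledge this and offer two possible repairs, but carry out neither. The Khasminskii corrector $y^{-p_2}\bigl(1+\eta g(x)\bigr)$ requires solving the Poisson equation for the boundary diffusion with a right-hand side growing like $\phi^2$, while $\pi_1^*$ has density decaying only like $\phi^{-4}$ at infinity (moments exist only for orders $<3$), so $g$ and its derivatives grow and must be shown to be dominated; moreover the full generator applied to $g(X(t))$ differs from the boundary generator by the terms $-c_1xy\,g'(x)+\frac{\beta_1^2x^2y^2}{2}g''(x)$ and by a cross-variation term proportional to $\beta_1\beta_2x^2y^2\,g'(x)\,\partial_y(y^{-p_2})$ coming from the shared Brownian motion $B_2$ (note $\langle dX,dY\rangle_t=\beta_1\beta_2X^2Y^2\,dt\neq0$), none of which you address. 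The Bena{\"\i}m--Schreiber occupation-measure route likewise needs a nontrivial uniform-integrability argument to pass from positivity of the external Lyapunov exponents of boundary ergodic measures to persistence, precisely because the diffusion coefficients grow quadratically. As written, the proposal is a plan for a proof, not a proof. (A small slip: the determinant of the diffusion matrix is $\alpha_1^2\alpha_2^2x^4y^4+\alpha_1^2\beta_2^2x^6y^2+\alpha_2^2\beta_1^2x^2y^6$, not $x^2y^4$ in the last term; this does not affect positivity.)

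It is worth contrasting this with what the paper actually does, because the paper deliberately avoids constructing any pointwise Lyapunov function. Proposition \ref{prop3.1} works over a \emph{finite} horizon $[0,T]$: it couples $X_z$ to the boundary diffusion $\varphi_x$ (Lemmas \ref{lm3.2} and \ref{lm3.3}), invokes the ergodic theorem for $\varphi$ once $T$ is large, and concludes $\ln Y_z(T)-\ln y\ge\frac{\lambda_1}{5}T$ with high probability when $y$ is small. Proposition \ref{prop3.2} then runs a discrete-time telescoping argument on $U(y)=(\ln H-\ln y)\vee L_1$ along the skeleton chain $Z(kT)$ to bound the time-averaged occupation of a boundary strip; combined with the tightness from Proposition \ref{prop2.1}(i) this yields positive average occupation of a compact subset of $\R^{2,\circ}_+$, hence existence of an invariant probability measure, after which uniqueness and total variation convergence follow from non-degeneracy alone. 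Note also that the theorem claims only total variation convergence, not the geometric ergodicity your drift condition would deliver, so you are attempting to prove more than is needed. If you want to keep your approach, the occupation-measure route is the more tractable of your two options, but you must supply the moment estimates; otherwise the finite-horizon/skeleton-chain device is the way to sidestep the pointwise drift problem entirely.
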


The following two theorems give
criteria under which
the competitive exclusion takes place almost surely.

\begin{thm}\label{thm2.2}
If $\lambda_1<0$ and $\lambda_2>0$ then the distribution of $X_{z_0}(t)$ converges weakly to $\pi^*_1$ and
$$\PP\left\{\lim\limits_{t\to\infty} \dfrac{\ln Y_{z_0}(t)}t=\lambda_1<0\right\}=1\,\forall z_0\in\R^{2\circ}_+.$$
If $\lambda_1>0$ and $\lambda_2<0$ then the distribution of $Y_{z_0}(t)$ converges weakly to $\pi^*_2$ while
$$\PP\left\{\lim\limits_{t\to\infty} \dfrac{\ln X_{z_0}(t)}t=\lambda_2<0\right\}=1\,\forall z_0\in\R^{2\circ}_+.$$
\end{thm}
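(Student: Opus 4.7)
\medskip

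\textbf{Proof proposal.} By the symmetry between the two species, it suffices to treat the case $\lambda_1<0<\lambda_2$; the targets are $\lim_{t\to\infty}\frac{\ln Y_{z_0}(t)}{t}=\lambda_1$ a.s.\ and the weak convergence of the law of $X_{z_0}(t)$ to $\pi_1^*$. The central identity is the It\^o expansion \eqref{e2.5}. Proposition~\ref{prop2.1}(iii) with $p=2$ gives $\E\int_0^T X_{z_0}^2(s)ds$ and $\E\int_0^T Y_{z_0}^2(s)ds$ growing at most linearly, so the standard strong law of large numbers for continuous local martingales yields $\frac{1}{T}\int_0^T\alpha_2 Y_{z_0}(s)dB_3(s)\to 0$ and $\frac{1}{T}\int_0^T\beta_2 X_{z_0}(s)dB_2(s)\to 0$ a.s., while $\frac{\ln y_0}{T}\to 0$. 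The problem reduces to evaluating the time-average of $a_2-b_2Y_{z_0}-\frac{\alpha_2^2}{2}Y_{z_0}^2-c_2X_{z_0}-\frac{\beta_2^2}{2}X_{z_0}^2$.

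Let $\varphi_{x_0}$ solve \eqref{e2.1} driven by the same $B_1$ with initial value $x_0$. The ergodic theorem \eqref{erg} applied to $h(\phi)=a_2-c_2\phi-\frac{\beta_2^2}{2}\phi^2$ combined with \eqref{e2.3} gives $\frac{1}{T}\int_0^T h(\varphi_{x_0}(s))ds\to\lambda_1$ a.s. The a.s.\ identity $\lim_{T\to\infty}\frac{\ln Y_{z_0}(T)}{T}=\lambda_1$ will follow once we establish
$$\frac{1}{T}\int_0^T\bigl(|X_{z_0}(s)-\varphi_{x_0}(s)|+|X_{z_0}^2(s)-\varphi_{x_0}^2(s)|+Y_{z_0}(s)+Y_{z_0}^2(s)\bigr)ds\to 0\ \text{a.s.}$$
I would resolve the inherent circularity by a bootstrap. \emph{Pass~1:} derive the crude bound $\limsup_{T\to\infty}\frac{\ln Y_{z_0}(T)}{T}\le\frac{\lambda_1}{2}$ a.s., using stopping times at the threshold crossings of $Y_{z_0}$ through a small level $\delta>0$. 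On the excursions $\{Y_{z_0}<\delta\}$, a direct It\^o/Gronwall comparison of $\ln X_{z_0}-\ln\varphi_{x_0}$ shows $|X_{z_0}-\varphi_{x_0}|=O(\delta)$ on average; the Lebesgue measure of the excursions $\{Y_{z_0}\ge\delta\}$ is controlled through Proposition~\ref{prop2.1}(iii) and Markov's inequality. Combined with $\lambda_1<0$, these estimates yield exponential decay of $Y_{z_0}$, so $\int_0^\infty(Y_{z_0}+Y_{z_0}^2)ds<\infty$ a.s. \emph{Pass~2:} with this integrability in hand, the It\^o SDE for $U=\ln X_{z_0}-\ln\varphi_{x_0}$ is mean-reverting around $0$ with forcing $O(Y_{z_0}+Y_{z_0}^2)$ and diffusion with quadratic variation $O(\int Y_{z_0}^2ds)$; this forces $U(t)\to 0$ a.s.\ and the desired time-averaged convergence of $X_{z_0}$ to $\varphi_{x_0}$, closing the bootstrap and yielding the sharp identity.

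For the weak convergence, Proposition~\ref{prop2.1}(i) gives tightness of the laws of $X_{z_0}(t)$ on $[0,\infty)$. Since $Y_{z_0}(t)\to 0$, the $X_{z_0}$-equation asymptotically coincides with \eqref{e2.1}, so every weak limit point $\nu$ is invariant for the semigroup of \eqref{e2.1} on $[0,\infty)$. The hypothesis $\lambda_2>0$ prevents $\nu$ from charging $\{0\}$: close to $x=0$ the effective Lyapunov exponent, read off from the definition of $\lambda_2$, is positive and forces a rebound. Since $\pi_1^*$ is the unique invariant probability on $(0,\infty)$, we conclude $\nu=\pi_1^*$ and the full weak convergence follows. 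The case $\lambda_1>0>\lambda_2$ is handled by an entirely symmetric argument with the roles of $(X,\varphi,B_1,f_1^*)$ and $(Y,\psi,B_3,f_2^*)$ interchanged.

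The main obstacle is Pass~1. In \cite{DDT,RR} the inter-specific terms are noise-free, allowing a pathwise one-sided comparison between $X_{z_0}$ and $\varphi_{x_0}$ and a clean bootstrap. Here the presence of $\beta_1 X Y dB_2$ and $\beta_2 X Y dB_2$ blocks such a comparison, so all decay estimates on $Y$ must be extracted from averaged-in-time coupling bounds; the threshold/excursion machinery above is the new ingredient this forces upon us.
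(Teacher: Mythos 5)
Your skeleton matches the paper's (reduce to the It\^o identity \eqref{e2.5}, kill the martingale terms, show $Y_{z_0}\to 0$ and that $X_{z_0}$ tracks $\varphi_{x_0}$, acknowledge the circularity), and you correctly identify the coupling between $X$ and $\varphi$ as the crux. But the two steps you lean on in Pass~1 do not work as stated. First, the occupation-time control of $\{Y_{z_0}\ge\delta\}$ via Proposition~\ref{prop2.1}(iii) and Markov's inequality gives that the expected fraction of time spent there is at most $M_p\delta^{-p}(1+\|z_0\|/t)$, which is \emph{vacuous} for small $\delta$ (the bound exceeds $1$); moment bounds alone cannot show the process spends most of its time near the $x$-axis, and this is exactly the circularity you set out to break. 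The paper breaks it differently: it proves a purely local statement (Proposition~\ref{prop4.1}) — for initial data in $[H^{-1},H]\times(0,\tilde\delta]$, with probability $\ge 1-5\eps$ one has simultaneously $\limsup_t \ln Y_z(t)/t\le-\lambda$ and $|\varphi_x^{-1}(t)-X_z^{-1}(t)|\le\gamma$ for all $t$, via a self-consistent stopping-time argument — and then \emph{globalizes} by observing that this makes $Z(t)$ non-recurrent in $\R^{2,\circ}_+$, hence transient by nondegeneracy, so it must eventually land in the strip $C_1=[H^{-1},H]\times(0,\tilde\delta_1)$; the hypothesis $\lambda_2>0$ enters precisely here, through the analogue of Proposition~\ref{prop3.2}, to rule out escape toward the $y$-axis. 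This globalization step, which is where the a.s.\ statement for arbitrary $z_0$ comes from, is absent from your proposal.

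Second, your Pass~2 contraction for $U=\ln X_{z_0}-\ln\varphi_{x_0}$ is not uniformly mean-reverting: the restoring drift is $-b_1(X-\varphi)-\tfrac{\alpha_1^2}{2}(X^2-\varphi^2)$, whose strength relative to $|U|$ is proportional to the logarithmic mean of $X$ and $\varphi$, and this degenerates when $\varphi$ is near $0$ — which happens on a set of times of positive density, since $\pi_1^*$ charges every neighborhood of $0$. A Gronwall bound on $U$ over an infinite horizon therefore does not close. This is why the paper works with $\varphi^{-1}-X^{-1}$ (the inverse of the logistic solution satisfies a \emph{linear} SDE): the drift of $(\varphi^{-1}-X^{-1})^2$ contains the term $-2a_1(\varphi^{-1}-X^{-1})^2$, a contraction at the constant rate $2a_1$, with forcing $O(Y^2/\varphi^2)$ that is then controlled by the exponential decay of $Y$, the ergodic bound $\int_0^t\varphi^{-2}\le 2Q_{-2}t$, and exponential martingale inequalities (see \eqref{e4.5}--\eqref{e4.14}). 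The log-difference comparison you propose is what the paper uses only in Section~\ref{sec:pie}, where Assumption~\ref{asp5.2} confines the dynamics to a compact set bounded away from the origin. Your argument for the weak convergence of $X_{z_0}(t)$ (tightness plus identification of limit points as invariant measures of \eqref{e2.1}) is a different and in principle viable route from the paper's Lipschitz test-function argument based on the pathwise bound $|\varphi_x^{-1}-X_z^{-1}|\le\gamma$, but as written it also presupposes the coupling and the exclusion of mass near $x=0$, i.e., the same two missing ingredients.
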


\begin{thm}\label{thm2.3}
Suppose that $\lambda_1$ and $\lambda_2$ are both negative. For any $z_0\in\R^{2\circ}_+$, we have
$p_{z_0}>0, q_{z_0}>0$ and $p_{z_0}+q_{z_0}=1$ where
$$p_{z_0}=\PP\left\{\lim\limits_{t\to\infty} \dfrac{\ln X_{z_0}(t)}t=\lambda_2\right\}\text{ and } q_{z_0}=\PP\left\{\lim\limits_{t\to\infty} \dfrac{\ln Y_{z_0}(t)}t=\lambda_1\right\}.$$
Moreover the distribution of $Z_{z_0}(t)$ converges weakly to $\mu_{z_0}:=p_{z_0}(\delta^*\times\pi^*_2)+q_{z_0}(\pi^*_1\times\delta^*)$
where
$\delta^*$ is the Dirac measure concentrated at $0$.
To be more precise, for any measurable sets $A, B\subset\R$, $\mu_{z_0}(A\times B)=p_{z_0}\delta^*(A)\pi^*_2(B)+q_{z_0}\pi^*_1(A)\delta^*(B)$.		
\end{thm}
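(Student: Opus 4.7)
The plan is to decompose the proof into three interlocking parts, matching the three assertions of the theorem: (A) $p_{z_0}>0$ and $q_{z_0}>0$ for every $z_0\in\R^{2,\circ}_+$; (B) $p_{z_0}+q_{z_0}=1$, i.e., one species is driven to extinction almost surely; and (C) the weak convergence of the law of $Z_{z_0}(t)$ to the mixture $\mu_{z_0}$. Parts (A) and (C) will rely on the tracking-by-boundary-diffusion machinery already developed for Theorem~\ref{thm2.2}, while the heart of the argument, and the main obstacle, will be part (B).

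For (A), I would first exploit the nondegeneracy of the noise ($\alpha_i\ne 0$) via a support-theorem / irreducibility argument to show that from any $z_0$ the process $Z_{z_0}$ enters, with positive probability in finite time, an arbitrarily thin slab $N_\eta^x=\{(x,y):x\in I,\ 0<y<\eta\}$ around the $x$-axis, where $I\subset(0,\infty)$ is a compact interval charged by $\pi^*_1$. Inside $N_\eta^x$, $X$ is a perturbation of the boundary diffusion $\varphi$ of order $O(\eta)$ (the cross-noise $\beta_1 XY\,dB_2$ in the $X$-equation has size $Y<\eta$), and an argument parallel to that of Theorem~\ref{thm2.2}---built on \eqref{e2.5}, the ergodicity \eqref{e2.2}, and the definition \eqref{e2.3} of $\lambda_1$---would deliver
\begin{equation*}
\inf_{z\in N_\eta^x}\PP_z\bigl\{\lim_{t\to\infty}\ln Y_z(t)/t=\lambda_1\bigr\}\longrightarrow 1\quad\text{as}\quad\eta\downarrow 0.
\end{equation*}
Combined with the strong Markov property this yields $q_{z_0}>0$; a symmetric argument near the $y$-axis yields $p_{z_0}>0$.

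Part (B) is the main obstacle. By Proposition~\ref{prop2.1}(i) the family $\{Z_{z_0}(t)\}_{t\ge 0}$ is tight, so on the complement $(A_X\cup A_Y)^c$---with $A_X=\{\lim X_{z_0}(t)=0\}$ and $A_Y=\{\lim Y_{z_0}(t)=0\}$---the trajectory returns infinitely often to some fixed compact $K\subset\R^{2,\circ}_+$. The strategy is to promote the pointwise estimate from (A) to a uniform one: produce $T_K<\infty$ and $\delta_K>0$ with
\begin{equation*}
\inf_{z\in K}\PP_z\bigl(A_X\cup A_Y\text{ is realized by time }T_K\bigr)\ge\delta_K,
\end{equation*}
after which a conditional Borel--Cantelli argument along the successive return times to $K$ would force $\PP((A_X\cup A_Y)^c)=0$. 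The delicate point is exactly the cross-noise term $\beta_i XY\,dB_2$, which prevents any pathwise comparison between $Z_{z_0}$ and the boundary diffusions; the uniform control over a long window on events $\{\sup_{s\le T}Y(s)<\eta\}$ must be built by hand, and this is where I expect the new techniques flagged in the introduction to enter, presumably via moment estimates exploiting Proposition~\ref{prop2.1}(ii)--(iii) together with variance estimates on the cross-noise martingale.

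Granted (A) and (B), part (C) should follow by a tracking-plus-ergodicity argument. On $A_Y$, $Y(t)\to 0$ forces $Y(t)^2\to 0$ and the corresponding martingale terms in \eqref{e2.5} to be $o(T)$, so
\begin{equation*}
\frac{\ln Y_{z_0}(T)}{T}=\frac{1}{T}\int_0^T\!\!\left(a_2-c_2X_{z_0}(t)-\frac{\beta_2^2}{2}X_{z_0}^2(t)\right)dt+o(1);
\end{equation*}
a tracking lemma identifying the time-averages of $X_{z_0}$ and $X_{z_0}^2$ on $A_Y$ with $Q_1$ and $Q_2$ then gives $\ln Y_{z_0}(T)/T\to\lambda_1$ a.s.\ on $A_Y$, and symmetrically $\ln X_{z_0}(T)/T\to\lambda_2$ a.s.\ on $A_X$. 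The same tracking, combined with the ergodicity \eqref{erg}, shows that conditionally on $A_Y$ the law of $X_{z_0}(t)$ converges weakly to $\pi^*_1$ and conditionally on $A_X$ the law of $Y_{z_0}(t)$ converges weakly to $\pi^*_2$. Conditioning on the partition $\{A_X,A_Y\}$ and testing against bounded continuous $F$ on $\R^2_+$ then yields
\begin{equation*}
\E F(Z_{z_0}(t))\longrightarrow p_{z_0}\int F(0,y)\,d\pi^*_2(y)+q_{z_0}\int F(x,0)\,d\pi^*_1(x),
\end{equation*}
which is precisely the announced weak convergence of $Z_{z_0}(t)$ to $\mu_{z_0}$.
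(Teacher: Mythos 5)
Your parts (A) and (C) follow the paper's route: positive probability of entering a thin slab $[H^{-1},H]\times(0,\tilde\delta)$ by non-degeneracy, Proposition \ref{prop4.1}-type tracking of the boundary diffusion inside the slab to get $q_{z_0}>0$ (symmetrically $p_{z_0}>0$), and then the identification of the Lyapunov exponents and of the conditional limit laws exactly as in the proof of Theorem \ref{thm2.2} (via Proposition \ref{prop4.2} and the strong law for the martingale terms in \eqref{e2.5}). Those pieces are sound.

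Part (B) is where you have a genuine gap, and you half-admit it. Your starting point --- that on $(A_X\cup A_Y)^c$ the trajectory returns infinitely often to a fixed compact $K\subset\R^{2,\circ}_+$ --- does not follow from Proposition \ref{prop2.1}(i): the Lyapunov function there is $V(x,y)=(x+y)^{-1}+(x+y)$, which keeps the \emph{sum} away from $0$ and $\infty$ but says nothing about each coordinate separately, and $X(t)\not\to 0$, $Y(t)\not\to 0$ only gives $\limsup X>0$ and $\limsup Y>0$, not simultaneous lower bounds along a common sequence of times. So the conditional Borel--Cantelli scheme has nothing to hang on. The paper closes this with a different, and shorter, idea: Proposition \ref{prop4.1} already shows that from points in the thin slab the process stays near the boundary forever with probability at least $1-5\eps$, so the non-degenerate diffusion $Z(t)$ is \emph{not recurrent} in $\R^{2,\circ}_+$; by the recurrence/transience dichotomy for non-degenerate diffusions it is therefore transient, whence $\PP\{Z_{z_0}(t)\in C_2\}\to 0$ for \emph{every} compact $C_2\subset\R^{2,\circ}_+$. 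Combining this with $\limsup_t\PP\{Z_{z_0}(t)\in C\}\ge 1-\eps$ for $C=\{H^{-1}\le x\vee y\le H\}$ (note $C$ reaches down to the axes, so it is not compact in $\R^{2,\circ}_+$) forces, at some large deterministic time, $\PP\{Z_{z_0}(T)\in C_3\cup C_4\}\ge 1-3\eps$ for the two thin slabs, and one application of the Markov property together with the slab estimate gives $p_{z_0}+q_{z_0}\ge(1-\eps)(1-3\eps)$. Letting $\eps\downarrow 0$ yields $p_{z_0}+q_{z_0}=1$ with no Borel--Cantelli and no uniform-in-$K$ hitting estimate. You should replace your part (B) by this transience argument; as written, your version would fail at the very first step.
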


\begin{example}\label{ex1}
Consider \eqref{e1.2} with parameters $a_1=4, a_2=3$, $b_1=1.5, b_2=1$, $c_1=1, c_2=0.5$, $\alpha_1=0.25, \alpha_2=0.5$, $\beta_1=0.5, \beta_2=0.25$.
Direct calculation shows that $\lambda_1=1.08, \lambda_2=1.53$.  In view of Theorem \ref{thm2.1}, \eqref{e1.2} has a unique invariant probability measure $\mu^*$ with support $\R^{2,\circ}_+$.
Moreover, the strong law of large numbers and the convergence in total variation of the transition probability hold.
We provide Figure \ref{f1.1} for illustration.
\begin{figure}
\centering
\includegraphics[totalheight=2in]{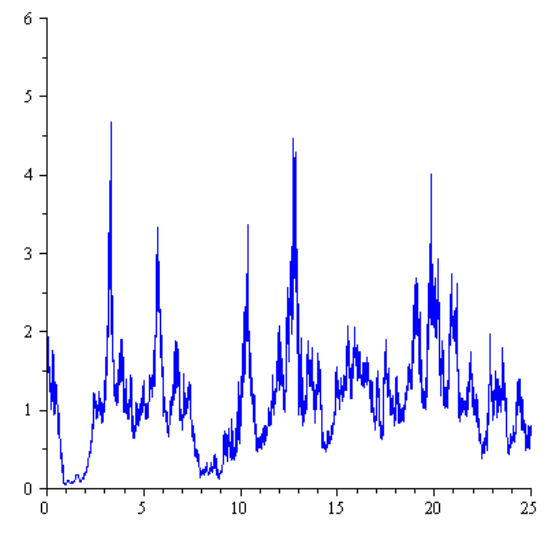}
\includegraphics[totalheight=2in]{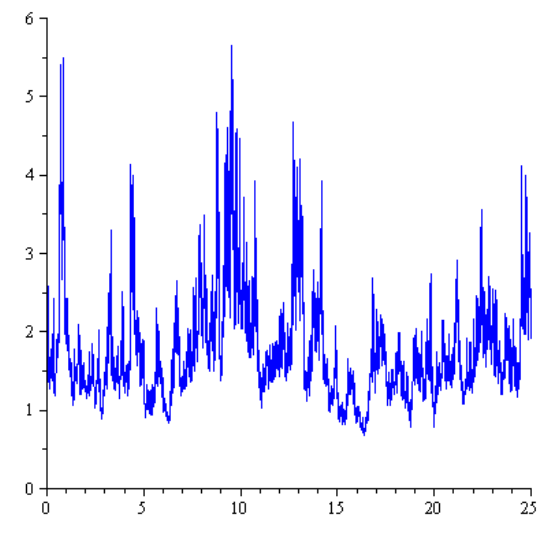}
\includegraphics[totalheight=2in]{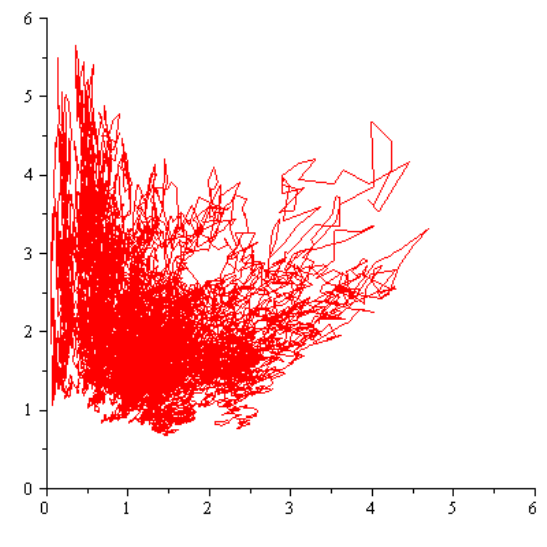}
\caption{\label{f1.1}
Trajectories of $X_z(t), Y_z(t)$ and phase portrait $(X_z(t), Y_z(t))$ in Example \ref{ex1} with $z=(2,2)$.}
\end{figure}
\end{example}
\begin{example}\label{ex2}
Consider \eqref{e1.2} with parameters $a_1=4, a_2=2$, $b_1=1.5, b_2=1$, $c_1=2, c_2=1$, $\alpha_1=1, \alpha_2=0.5$, $\beta_1=0.5, \beta_2=1$.
In this example, $\lambda_1=-1.07, \lambda_2=0.41$.  In view of Theorem \ref{thm2.2}, $\lim\limits_{t\to\infty}\dfrac{\ln Y_z(t)}t= -1.07$ while $X_z(t)$ converges in distribution to $\pi^*_1$.
Sample paths of $\ln X_z(t), \ln Y_z(t)$ and phase portrait of $(X_z(t), Y_z(t))$ with $z=(2,2)$ are
plotted in Figure \ref{f1.2}.
\begin{figure}
\centering
\includegraphics[totalheight=2in]{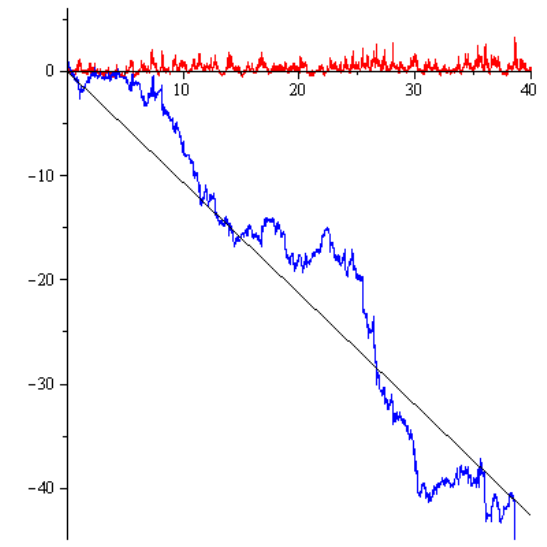}
\includegraphics[totalheight=2in]{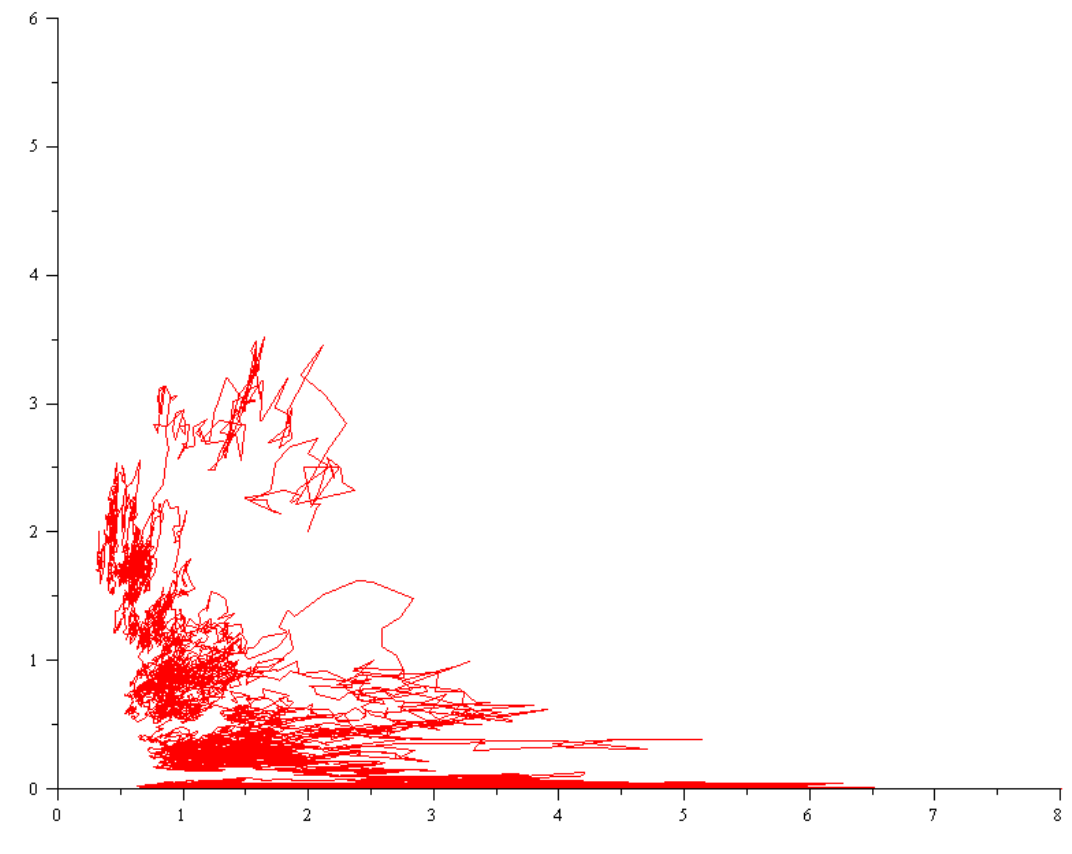}
\caption{\label{f1.2}
Sample paths of $\ln (X_z(t))$ (in red) and $\ln (Y_z(t))$ (in blue) and phase portrait of Example \ref{ex2}.}
\end{figure}
\end{example}

\begin{example}\label{ex3}
Consider \eqref{e1.2} with parameters $a_1=a_2=2$, $b_1=b_2=1$, $c_1=c_2=2$, $\alpha_1=\alpha_2=1$, $\beta_1=\beta_2=1$.
We have $\lambda_1=\lambda_2=-1.06$.  This system is symmetric. The initial value has the same coordinates: $z=(2,2)$.
Hence, the probabilities that the solution converges to the two axes are the same.
We provided two trials. One of them results in the convergence to the $y$-axis. The other shows the convergence to the $x$-axis. Figures \ref{f1.3}, \ref{f1.4} validate our claim.
\begin{figure}
\centering
\includegraphics[totalheight=1.9in]{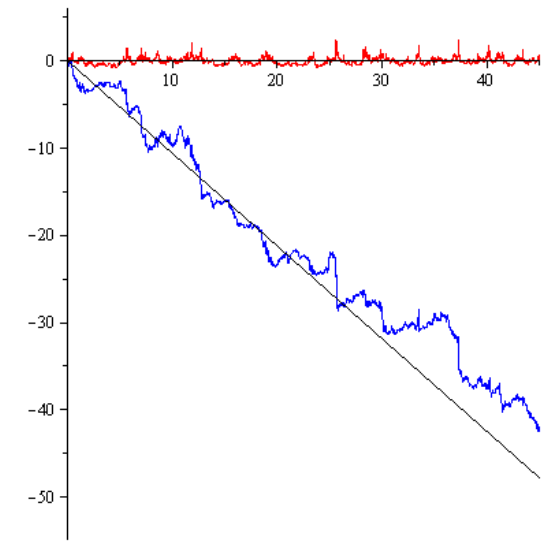}
\includegraphics[totalheight=1.9in]{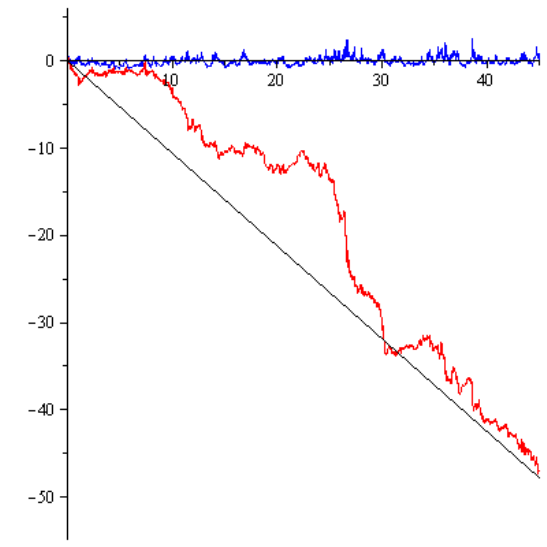}
\caption{\label{f1.3}
Sample paths of $\ln (X_z(t))$ (in red) and $\ln (Y_z(t))$ (in blue) of Ex. \ref{ex3} in two trials. The black line has slope $\lambda_1=\lambda_2<0$.}
\end{figure}
\begin{figure}
\centering
\includegraphics[totalheight=2in]{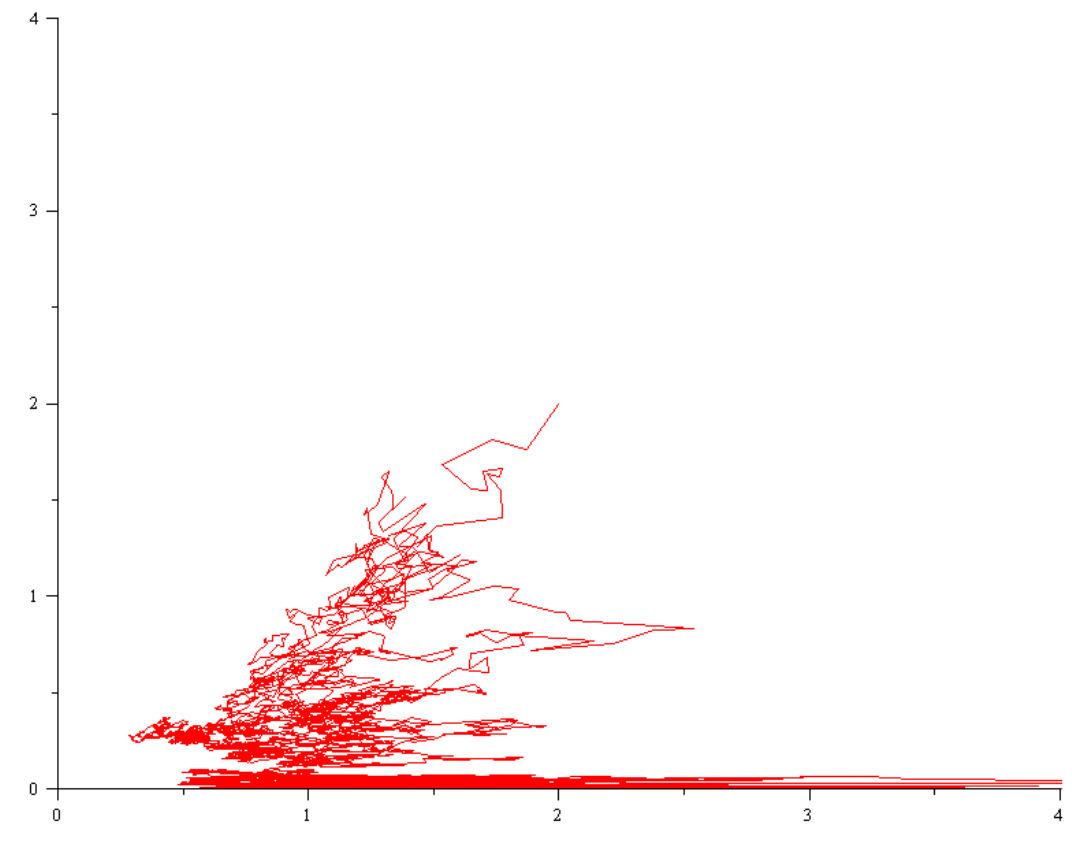}
\includegraphics[totalheight=2in]{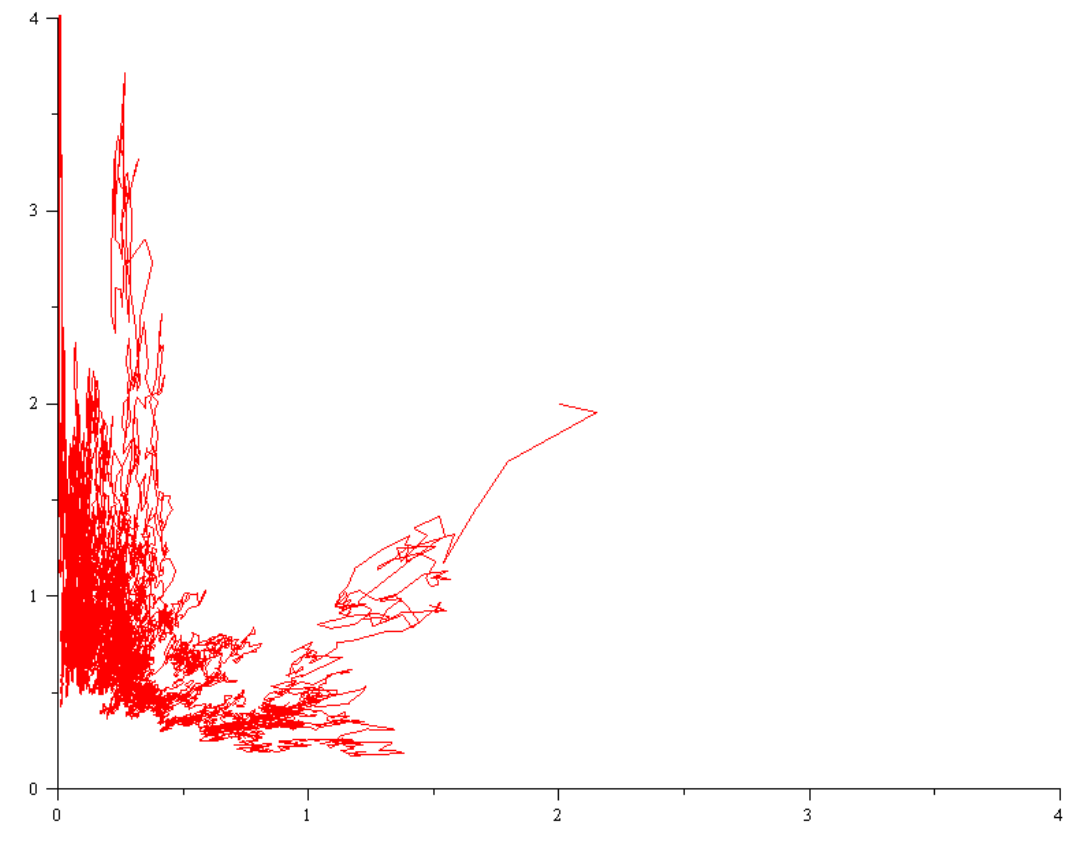}
\caption{\label{f1.4}
Phase portraits of the solution of Ex. \ref{ex3} in two trials.}
\end{figure}
\end{example}

\section{Coexistence}\label{sec:coe}
This section is devoted to proving Theorem \ref{thm2.1}.
The following formula is the well-known  exponential martingale inequality, which will be used several times in our proofs.
It asserts that for any $a, b>0$,
\begin{equation}\label{emi}
\PP\left\{\int_0^tg(s)dW(s)-\dfrac{a}{2}\int_0^tg^2(s)ds>b\,\forall t\geq0\right\}\leq e^{-ab},
\end{equation}
if $W(t)$ is a $\F_t$-adapted Brownian motion while $g(t)$ is a real-valued $\F_t$-adapted process and $\int_0^tg^2(s)ds<\infty\,\forall t\geq0$ almost surely (see \cite[Theorem 1.7.4]{XM}).
It should be noted that  in  \cite[Theorem 1.7.4]{XM}, the inequality is stated for a finite interval.
However, \eqref{emi} holds since
$$
\begin{aligned}
&\PP\left\{\int_0^tg(s)dW(s)-\dfrac{a}{2}\int_0^tg^2(s)ds>b\,\forall t\geq0\right\}\\
&\qquad\qquad\qquad=\lim\limits_{T\to\infty}\PP\left\{\int_0^tg(s)dW(s)-\dfrac{a}{2}\int_0^tg^2(s)ds>b\,\forall \,t\in[0,T]\right\}.
\end{aligned}
$$
Let any $T>1$,
 $p^*\in (1,1.5)$, and $\dfrac1{p^*}+\dfrac1{q^*}=1$. For $A\in\F$, denote by $\1_{A}$ the indicator function of $A$.
Using part (iii) of Proposition \ref{prop2.1} and Holder's inequality, we can estimate
\begin{equation}\label{e3.1}
\begin{aligned}
\E\1_A\big|\ln Y_{z }(T)-\ln y \big|\leq& \E\int_0^T\1_A\left|a_2+b_2Y_{z }(s)+c_2X_{z }(s)+\dfrac{\alpha_2^2Y_{z }^2(s)+\beta_2^2X_{z }^2(s)}{2}\right|ds\\
&+\E\1_A\left|\int_0^T(\alpha_2Y_{z }dB_3(s)+\beta_2X_{z }(s))dB_2(s)\right|\\
\leq&\theta_1\left(\PP(A)T\big)^{1/q^*}\Big(\E\int_0^T\big(1+X_{z }^{2p^*}(s)+Y_{z }^{2p^*}(s)\big)ds\right)^{1/p^*}\\
&+\sqrt{\PP(A)}\left(\E\int_0^T(\alpha_2^2Y_z^2+\beta^2_2X^2_{z }(s))ds\right)^{\frac12}\\
\leq&\theta_2\big(\PP(A)\big)^{1/q^*}(1+|z |)^{1/p^*}T\qquad (\text{ since } 1/q^*<1/2).
\end{aligned}
\end{equation}
for some constants $\theta_1,\theta_2$ independent of $z$, $T$ and $A$.
In particular, when $A=\Omega$,
\begin{equation}\label{e3.2}
\E\Big|\dfrac{\ln Y_{z }(T)-\ln y}T \Big|\leq \theta_2(1+|z |)^{1/p^*}.
\end{equation}
and consequently,
\begin{equation}\label{e3.3}
\PP\left\{\Big|\dfrac{\ln Y_{z }(T)-\ln y}T \Big|\geq \dfrac{\theta_2(1+|z |)^{1/p^*}}\eps\right\}\leq\eps.
\end{equation}
In what follows, we define the stopping time
$$\tau_{z }^\sigma =\inf\{t\geq0: Y_z(t)\geq\sigma\}.$$

\begin{lem}\label{lm3.2}
For any $T>1,\eps>0, \sigma>0$, there is a $\delta=\delta(T, \eps, \sigma)>0$
such that $$\PP\{\tau_{z }^\sigma \geq T\}\geq 1-\eps\, \ \forall z\in(0,\infty)\times(0,\delta].$$
\end{lem}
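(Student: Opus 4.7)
The plan is to apply It\^o's formula to $\ln Y_z(t)$ and show that as long as $Y_z$ stays below $\sigma$, the logarithm cannot move up by more than a controlled amount in time $T$; then choosing $\ln \delta$ sufficiently negative makes it impossible for $\ln Y_z$ to reach $\ln \sigma$ before time $T$ on a high-probability event. Concretely, from \eqref{e2.5} one has, for $t \in [0, \tau_z^\sigma \wedge T]$,
\begin{equation*}
\ln Y_z(t) = \ln y + \int_0^t\!\Bigl(a_2 - b_2 Y_z - c_2 X_z - \tfrac{\alpha_2^2}{2} Y_z^2 - \tfrac{\beta_2^2}{2} X_z^2\Bigr)ds + M_1(t) + M_2(t),
\end{equation*}
where $M_1(t) = \int_0^t \alpha_2 Y_z\, dB_3$ and $M_2(t) = \int_0^t \beta_2 X_z\, dB_2$, and $Y_z(s)\le\sigma$ on this interval by definition of $\tau_z^\sigma$.

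The first obstacle is that $c_2 X_z$ in the drift is unbounded. I would tame it by completing the square: the elementary bound $c_2 X_z \le \tfrac{\beta_2^2}{4} X_z^2 + \tfrac{c_2^2}{\beta_2^2}$ lets us absorb the linear term into half of the negative quadratic, leaving a spare $-\tfrac{\beta_2^2}{4}\int_0^t X_z^2\,ds$. The remaining negative quadratic is then exactly what the exponential martingale inequality \eqref{emi}, applied to $g = \beta_2 X_z$ on $W = B_2$ with $a = 1/2$, requires: on an event $A_1$ with $\PP(A_1) \ge 1 - e^{-b/2}$ we have $M_2(t) \le b + \tfrac14\int_0^t \beta_2^2 X_z^2\,ds$ for all $t\ge 0$. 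Choose $b$ so that $e^{-b/2} < \eps/2$.

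For $M_1$, observe that the stopped process $M_1(t\wedge \tau_z^\sigma)$ has quadratic variation bounded by $\alpha_2^2\sigma^2 T$, so Doob's $L^2$ inequality gives
\begin{equation*}
\PP\Bigl\{\sup_{0\le t\le T} |M_1(t\wedge\tau_z^\sigma)| \ge K\Bigr\} \le \frac{4\alpha_2^2\sigma^2 T}{K^2};
\end{equation*}
pick $K = K(\eps, T, \sigma)$ so that this probability is at most $\eps/2$, and call that event $A_2$. On $A_1 \cap A_2$, combining the three estimates yields, for all $t \in [0, \tau_z^\sigma \wedge T]$,
\begin{equation*}
\ln Y_z(t) \le \ln y + \Bigl(a_2 + \tfrac{c_2^2}{\beta_2^2}\Bigr) T + b + K =: \ln y + C(T,\eps,\sigma).
\end{equation*}

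To finish, define $\delta := \sigma \exp(-C(T,\eps,\sigma) - 1)$. If $y \le \delta$, then on $A_1 \cap A_2$ the right-hand side is strictly less than $\ln \sigma$ on all of $[0, \tau_z^\sigma \wedge T]$, which by continuity of $Y_z$ forces $\tau_z^\sigma \ge T$ (otherwise at $t = \tau_z^\sigma < T$ one would have $\ln Y_z(\tau_z^\sigma) = \ln\sigma$, a contradiction). Since $\PP(A_1\cap A_2) \ge 1 - \eps$, this gives the desired bound. Note that the argument is uniform in $x = X_z(0)$ because the $x$-dependence of $X_z$ enters only through the martingale and drift estimates, both of which have been controlled in a pathwise manner via completing the square. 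The main obstacle, as flagged, is exactly the fact that $X_z$ can be arbitrarily large; the key trick is that the endogenous negative drift $-\tfrac{\beta_2^2}{2}X_z^2$ in the $\ln Y_z$ equation is powerful enough to simultaneously dominate the linear competition term and the quadratic variation of the cross-noise $\beta_2 X_z\, dB_2$.
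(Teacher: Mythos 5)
Your proof is correct and rests on the same core mechanism as the paper's: use the exponential martingale inequality to dominate the stochastic integral by a fraction of its quadratic variation, absorb that into the negative quadratic drift terms of $\ln Y_z$, conclude $\ln Y_z(t)\leq \ln y + Ct + C'$ on a high-probability event, and then take $\delta$ exponentially small. Two remarks, though. First, the ``main obstacle'' you flag is not one: the competition term enters the drift of $\ln Y_z$ as $-c_2X_z(t)$ with $c_2>0$ and $X_z>0$, so it is nonpositive and can simply be discarded along with $-b_2Y_z$ and $-\tfrac{\alpha_2^2}{2}Y_z^2$; your completing-the-square step is an upper bound for $+c_2X_z$, which is not the term present. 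The step is harmless (it yields a valid, merely weaker, upper bound on the drift), but it is unnecessary, and it divides by $\beta_2^2$, which fails in the case $\beta_2=0$ that the paper does not exclude (only $\alpha_i\neq 0$ is assumed). Second, the paper's version is slightly cleaner than your split of the noise into $M_1$ and $M_2$: it applies \eqref{emi} once, with $a=1$, to the single continuous local martingale $\int_0^t\big(\alpha_2Y_zdB_3+\beta_2X_zdB_2\big)$, whose bracket $\int_0^t(\alpha_2^2Y_z^2+\beta_2^2X_z^2)ds$ is then cancelled exactly by the two quadratic drift terms, giving $\ln Y_z(t)<\ln y+\ln\tfrac1\eps+a_2t$ for all $t$ and the explicit choice $\delta=\sigma\eps e^{-a_2T}$, with no need for Doob's inequality or for stopping at $\tau_z^\sigma$ in the martingale estimate. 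Your use of Doob's $L^2$ inequality for the stopped $M_1$ is valid, but the single-martingale route avoids it entirely.
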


%
%

\begin{proof}
By the exponential martingale inequality,
$\PP(\Omega_1^z)\geq1-\eps	$,
where
$$\Omega_1^z=\left\{\int_0^{t}(\alpha_2Y_zdB_3(s)+\beta_2X_{z }(s)dB_2(s))<\dfrac12\int_0^{t}\big(\alpha_2^2Y^2_z(s)+\beta^2_2X^2_{z}(s)\big)ds+\ln\dfrac1\eps\,\forall t\right\}.$$
In view of \eqref{e2.5},
when $\omega\in \Omega_1^z$ we have
$$\ln Y_{z } (t)<\ln y +\ln\dfrac1\eps+\int_0^{t}a_2dt=\ln y+\ln\dfrac1\eps+a_2t\, \ \forall t\geq0.$$
Letting $\delta=\sigma\eps e^{-a_2T}$, we can see that if $y\leq\delta$, then
$Y_{z } (t)<\sigma\,\forall t<T, \omega\in\Omega_1^z$.
\end{proof}

\begin{lem}\label{lm3.3}
For any $H, T>1,\eps, \nu >0$, there is a $\sigma>0$ such that for all $z \in [H^{-1},H]\times(0,\sigma]$,
$$\PP\{|\varphi_{x }(t)-X_{z }(t)|<\nu\,\forall 0\leq t\leq T\wedge\tau_{z }^\sigma \}\geq 1-\eps.$$
\end{lem}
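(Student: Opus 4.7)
The plan is to show that $X_z(t)$ tracks the one-dimensional diffusion $\varphi_x(t)$ as long as $Y_z(t)$ remains small. Setting $\Delta(t) := X_z(t) - \varphi_x(t)$, we have $\Delta(0) = 0$ since $\varphi_x$ coincides with $X_{(x,0)}$. Subtracting the two equations yields the linear SDE
\[
d\Delta = \bigl[a_1\Delta - b_1\Delta(X_z + \varphi_x) - c_1 X_z Y_z\bigr]\,dt + \alpha_1\Delta(X_z + \varphi_x)\,dB_1 + \beta_1 X_z Y_z\,dB_2,
\]
whose inhomogeneous forcing terms $c_1 X_z Y_z\,dt$ and $\beta_1 X_z Y_z\,dB_2$ are of order $\sigma$ on the interval $[0, \tau_z^\sigma]$.

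To control the multiplicative noise $\alpha_1 \Delta(X_z+\varphi_x)\,dB_1$, whose coefficient does not vanish with $Y_z$, I localize both $\varphi_x$ and $X_z$. By Proposition~\ref{prop2.1}(ii) applied to $\varphi_x = X_{(x,0)}$ and to $X_z$, there exists $\bar H = \bar H(\eps, H, T) > 1$ such that the event
\[
\Omega_0 := \bigl\{\bar H^{-1} \leq \varphi_x(t),\, X_z(t) \leq \bar H \ \forall t \in [0,T]\bigr\}
\]
has probability at least $1 - \eps/2$ uniformly in $z \in [H^{-1}, H] \times [0, H]$. Let $\tau_{\bar H}$ be the first exit time of $\varphi_x$ or $X_z$ from $[\bar H^{-1}, \bar H]$, and set $\rho := T \wedge \tau_z^\sigma \wedge \tau_{\bar H}$. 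On $[0,\rho]$ all coefficients in the $\Delta$-equation are bounded by constants depending only on $\bar H$ and the model parameters, and $|X_z Y_z| \leq \bar H \sigma$.

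Applying It\^o's formula to $\Delta^2$, using $2c_1|\Delta X_z Y_z| \leq c_1\Delta^2 + c_1\bar H^2 \sigma^2$ on the cross term and discarding the non-positive dissipative term $-2b_1 \Delta^2(X_z+\varphi_x)$, I obtain on $[0,\rho]$
\[
d(\Delta^2) \leq \bigl(K_1 \Delta^2 + K_2\sigma^2\bigr)\,dt + dN_t,
\]
where $K_1, K_2$ depend only on the model parameters and $\bar H$, and $N$ is a continuous local martingale whose bracket satisfies $d\langle N\rangle_t \leq K_3\Delta^2\,dt$ on $[0,\rho]$ (since $\Delta^2 \leq 4\bar H^2$ there). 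Taking expectations and applying Gronwall's inequality yields $\E\,\Delta^2(t\wedge\rho) \leq K_4\sigma^2$ for all $t \leq T$. The Burkholder--Davis--Gundy inequality together with Jensen then gives $\E\sup_{t\leq T}|N_{t\wedge\rho}| \leq K_5\sigma$, so
\[
\E \sup_{0\leq t\leq T} \Delta^2(t\wedge\rho) \leq K_6 \sigma \qquad (\sigma \leq 1).
\]
Markov's inequality yields $\PP\bigl\{\sup_{t \leq T}|\Delta(t\wedge\rho)| \geq \nu\bigr\} \leq K_6\sigma/\nu^2$, which is at most $\eps/2$ once $\sigma$ is chosen sufficiently small.

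On the intersection of $\Omega_0$ with the complement of the previous event, $\tau_{\bar H} > T$ (so $\rho = T \wedge \tau_z^\sigma$) and $\sup_{t \leq T \wedge \tau_z^\sigma}|\Delta(t)| < \nu$, proving the lemma with probability at least $1 - \eps$. The main obstacle is that the noise coefficient $\alpha_1(X_z + \varphi_x)$ of $\Delta$ does not vanish with $Y_z$; without the $\tau_{\bar H}$-localization Gronwall would not close, and the $\sigma$-small forcing would be overwhelmed by the exponential growth allowed by this multiplicative perturbation. Restricting to $[0,\tau_{\bar H}]$ turns this coefficient into a bounded one and lets the linear estimate produce the required $\sigma^2$-order bound on $\E\Delta^2$.
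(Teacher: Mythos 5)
Your proof is correct and follows essentially the same route as the paper's: localize both $\varphi_x$ and $X_z$ in $[\bar H^{-1},\bar H]$ via Proposition~\ref{prop2.1}(ii) and a stopping time, bound the difference of the two SDEs using It\^o, Burkholder--Davis--Gundy, and Gronwall to get an $O(\sigma)$ bound on $\E\sup_{t\le T}\Delta^2(t\wedge\rho)$, and conclude by Chebyshev. The only cosmetic difference is that you apply It\^o to $\Delta^2$ and split the estimate into a pointwise Gronwall step followed by a BDG step for the supremum, whereas the paper runs Gronwall directly on $\E\sup_{s\le t}(\varphi_x-X_z)^2$; both yield the same conclusion.
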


\begin{proof}
By part (ii) of Proposition \ref{prop2.1}, we can find $\bar H$ sufficiently large such that
$$\PP\{(\varphi_{x }(t)\big)\vee \big(X_{z }(t)\big)\leq \bar H\,\forall t\leq T\}\geq 1-\dfrac\eps2\,\forall z\in [H^{-1},H]\times(0,1].$$
Let $\xi_{z }:=\tau_{z }^\sigma \wedge\inf\big\{u: \big(\varphi_{x }(u)\big)\vee \big(X_{z}(u)\big)\geq \bar H\big\}.$
It follows from the It\^o formula that
$$
\begin{aligned}
|\varphi_{x }(s)-X_{z }(s)|\leq&\int_0^s|\varphi_{x }(u)-X_{z }(u)|\big(a_1+b_1(\varphi_{x }(u)+X_{z }(u))\big)du\\
&+c_1\int_0^s X_{z }(u)Y_{z }(u)du+|\beta_1|\left|\int_0^sX_{z }(u)Y_{z }(u)dB_2(u)\right|\\
&+|\alpha_1|\left|\int_0^s\big(\varphi_{x }(u)-X_{z }(u)\big)(\varphi_{x }(u)+X_{z }(u)\big)dB_1(u)\right|.
\end{aligned}
$$
The elementary inequality $\big(\sum_{i=1}^n a_i\big)^2\leq 2^n\sum_{i=1}^n a^2_i$ leads to
\begin{equation}\label{e3.6}
\begin{aligned}
\E\sup\limits_{s\leq t}\big(&\varphi_{x }(t\wedge\xi_{z })-X_{z }(t\wedge\xi_{z })\big)^2\\
\leq&16\E \left(\int_0^{t\wedge\xi_{z }}|\varphi_{x }(u)-X_{z }(u)|\big(a_1+b_1(\varphi_{x }(u)+X_{z }(u)\big)du\right)^2\\
&+16c^2_1\E\int_0^{t\wedge\xi_{z }} X^2_{z }(u)Y^2_{z }(u)du+16\beta_1^2\E\sup\limits_{s\leq t}\left|\int_0^{s\wedge\xi_{z }}X_{z }(u)Y_{z }(u)dB_2(u)\right|^{2}\\
&+16\alpha_1^2\E\sup\limits_{s\leq t}\left|\int_0^{s\wedge\xi_{z }}\big(\varphi_{x }(u)-X_{z }(u)\big)(\varphi_{x }(u)+X_{z }(u)\big)dB_1(u)\right|^2.
\end{aligned}
\end{equation}
We have the following estimates for $t\in[0,T]$
\begin{equation}\label{e3.7}
\E\sup\limits_{s\leq t}\left|\int_0^{s\wedge\xi_{z }}X_{z }(u)Y_{z }(u)dB_2(u)\right|^{2}\leq 4\sigma^2\E\int_0^{t\wedge\xi_{z }}X^2_{z }(u)du\leq 4\bar H^2T\sigma^2,
\end{equation}
\begin{equation}\label{e3.8}
\E\int_0^{t\wedge\xi_{z }} X^2_{z }(u)Y^2_{z }(u)du\leq \bar H^2\sigma^2T,
\end{equation}
\begin{equation}\label{e3.9}
\begin{aligned}
\E\sup\limits_{s\leq t}\bigg|&\int_0^{s\wedge\xi_{z }}\big(\varphi_{x }(u)-X_{z }(u)\big)(\varphi_{x }(u)+X_{z }(u)\big)dB_1(u)\bigg|^2\\
\leq& 4\E\int_0^{t\wedge\xi_{z }}\big(\varphi_{x }(u)-X_{z }(u)\big)^2(\varphi_{x }(u)+X_{z }(u)\big)^2d(u)\\
\leq& 16\bar H^2\E\int_0^{t\wedge\xi_{z }}\big(\varphi_{x }(u)-X_{z }(u)\big)^2du,
\end{aligned}
\end{equation}
where \eqref{e3.7} and \eqref{e3.9} follow from the Burkholder-Davis-Gundy inequality. 	
By Holder's inequality,
\begin{equation}\label{e3.10}
\begin{aligned}
\E \bigg(\int_0^{t\wedge\xi_{z }}|&\varphi_{x }(u)-X_{z }(u)|\big(a_1+b_1(\varphi_{x }(u)+X_{z }(u))\big)du\bigg)^2\\
&\leq (a_1+2b_1\bar H)^2T \E\int_0^{t\wedge\xi_{z }}\big(\varphi_{x }(u)-X_{z }(u)\big)^2du\,\forall\,t\in[0,T].
\end{aligned}
\end{equation}
Applying \eqref{e3.7}, \eqref{e3.8}, \eqref{e3.9}, and \eqref{e3.10} to \eqref{e3.6} we have
$$
\begin{aligned}
\E\sup\limits_{s\leq t}&\big(\varphi_{x }(s\wedge\xi_{z })-X_{z }(s\wedge\xi_{z })\big)^2\\
&\leq \bar m\left(\sigma^2+\E\int_0^{t\wedge\xi_{z }}\big(\varphi_{x }(u)-X_{z }(u)\big)^2du\right)\\
&\leq \bar m\left(\sigma^2+\int_0^{t}\Big(\E\sup\limits_{s\leq u}\big(\varphi_{x }(s\wedge\xi_{z })-X_{z }(s\wedge\xi_{z })\big)^2\right)du\,\forall\,t\in[0,T]
\end{aligned}
$$ for some $\bar m=\bar m(\bar H, T)>0.$
Applying Gronwall's inequality,
$$\E\sup\limits_{s\leq T}\big(\varphi_{x }(s\wedge\xi_{z })-X_{z }(s\wedge\xi_{z })\big)^2\leq \bar m\sigma^2\exp(\bar m T).$$
As a result,
$$\PP\left\{\sup\limits_{s\leq T}\big(\varphi_{x }(s\wedge\xi_{z })-X_{z }(s\wedge\xi_{z })\big)^2\geq\nu^2\right\}\leq  \dfrac{\bar m\sigma^2e^{\bar m T}}{\nu^{2}}<\dfrac\eps2\text{ when } \sigma \text{ is sufficiently small}.$$ Then
$$\PP\left\{s\wedge\xi_{z }=s\wedge\tau_z^\sigma\,\forall s\in[0,T]\right\}\geq\PP\left\{ \sup\limits_{s\leq T}\big\{\big(\varphi_{x }(s)\big)\vee \big(X_{z }(s)\big)\big\}
\leq \bar H\right\}\geq 1-\dfrac{\eps}2,$$
yielding the desired result.
\end{proof}

\begin{lem}\label{lm3.4} For any $\eps>0$,
there is an $\hat M>0$ such that
$$\PP\left\{\left|\int_0^T\alpha_2 Y_{z }(t)dB_3(t)+\beta_2 X_{z }(t)dB_2(t)\right|\leq \dfrac{\hat M}{\eps}\sqrt{T\|z \|}\right\}\geq 1-\eps.$$
\end{lem}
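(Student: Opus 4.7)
My plan is a direct second-moment argument. Write
$$M_T := \int_0^T \bigl[\alpha_2 Y_z(t)\,dB_3(t) + \beta_2 X_z(t)\,dB_2(t)\bigr].$$
Since $B_2$ and $B_3$ are mutually independent $\F_t$-Brownian motions, $M_T$ is a square-integrable martingale whose quadratic variation is $\int_0^T [\alpha_2^2 Y_z^2(t) + \beta_2^2 X_z^2(t)]\,dt$. The It\^o isometry (applied separately to each integral, using orthogonality of $dB_2$ and $dB_3$) therefore yields
$$\E M_T^2 \;=\; \E\int_0^T \bigl[\alpha_2^2 Y_z^2(t)+\beta_2^2 X_z^2(t)\bigr]\,dt \;\leq\; (\alpha_2^2+\beta_2^2)\,\E\int_0^T \|Z_z(t)\|^2\,dt.$$

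Next I would invoke part (iii) of Proposition~\ref{prop2.1} with the admissible exponent $p = 2 \in (0,3)$ to obtain $\E\int_0^T\|Z_z(t)\|^2\,dt \leq M_2(T+\|z\|)$, hence $\E M_T^2 \leq C(T+\|z\|)$ for a constant $C$ depending only on $\alpha_2,\beta_2$, and $M_2$. Chebyshev's inequality then delivers, for every threshold $a>0$,
$$\PP\bigl(|M_T| > a\bigr) \;\leq\; \dfrac{\E M_T^2}{a^2} \;\leq\; \dfrac{C(T+\|z\|)}{a^2}.$$
Setting $a = (\hat M/\eps)\sqrt{T\|z\|}$ and choosing $\hat M$ large enough that $\hat M^2 \geq C\eps\,(T+\|z\|)/(T\|z\|)$, the right-hand side is bounded by $\eps$; because $T>1$ throughout this section, the quotient $(T+\|z\|)/(T\|z\|)$ is controlled uniformly by $1 + 1/\|z\|$, so $\hat M$ can be taken independent of $T$.

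The argument is routine once the second-moment estimate is in place; the substantive input is the moment bound from Proposition~\ref{prop2.1}(iii), which makes $\E M_T^2$ linear in $T+\|z\|$ and hence compatible with the $\sqrt{T\|z\|}$-scale threshold. The only subtlety I foresee is ensuring that the constant $\hat M$ is chosen uniformly in $T,z,\eps$; this is automatic from the linear-in-$(T+\|z\|)$ growth of $\E M_T^2$ combined with the assumption $T>1$. If a stronger uniformity in $\|z\|$ were required (e.g., as $\|z\|\to 0$), one could alternatively use Doob's maximal inequality to upgrade the bound to a uniform-in-$t$ estimate, but neither the statement nor its intended use in the sequel requires this refinement.
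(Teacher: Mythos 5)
Your proposal is correct and is essentially identical to the paper's own (two-line) proof: the It\^o isometry gives $\E M_T^2=\E\int_0^T(\alpha_2^2Y_z^2+\beta_2^2X_z^2)\,dt$, Proposition~\ref{prop2.1}(iii) with $p=2$ bounds this by $C(T+\|z\|)$, and Chebyshev's inequality finishes. Your added remark on uniformity is a fair observation (the constant degenerates as $\|z\|\to0$, which is harmless here since the lemma is only invoked for $z\in[H^{-1},H]\times(0,\delta_0]$), but the argument itself matches the paper's.
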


\begin{proof}
Since $$\E \left|\int_0^T\alpha_2 Y_{z }(t)dB_3(t)+\beta_2 X_{z }(t)dB_2(t)\right|^2=\E\int_0^T\big(\alpha_2^2Y^2_{z }(t)+\beta_2^2 X^2_{z }(t)\big)dt,$$ using (iii) of Proposition \ref{prop2.1} and Chebyshev's inequality we obtain the result.
\end{proof}

\begin{prop}\label{prop3.1}
Assume that $\lambda_1>0$.
For any $\eps>0,H>1$, there are $T=T(\eps, H)>0$ and $\delta_0=\delta_0(\eps, H)$ satisfying that for any $z \in [H^{-1},H]\times(0,\delta_0]$,
$\PP(\hat\Omega^{z })>1-4\eps$, where
$$\hat\Omega^{z }=\left\{\dfrac{\lambda_1}5T\leq\ln Y_{z }(T)-\ln y \right\}.$$
\end{prop}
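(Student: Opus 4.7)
The plan is to apply It\^o's formula to $\ln Y_z(T)$, isolate an ergodic-type time average along the boundary diffusion $\varphi_x$, and show that on a suitably chosen good event all remaining contributions are negligible compared with $\lambda_1 T$. Concretely, \eqref{e2.5} gives
\[
\ln Y_z(T) - \ln y = \int_0^T g(\varphi_x(t))\,dt + \int_0^T \bigl[g(X_z(t))-g(\varphi_x(t))\bigr]\,dt + \int_0^T h(Y_z(t))\,dt + M(T),
\]
where $g(u) = a_2 - c_2 u - \beta_2^2 u^2/2$ is the integrand whose $\pi_1^*$-mean is $\lambda_1$, $h(v) = -b_2 v - \alpha_2^2 v^2/2$, and $M(T) = \int_0^T [\alpha_2 Y_z(t)\,dB_3(t) + \beta_2 X_z(t)\,dB_2(t)]$. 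If $Y_z$ stays below a small $\sigma$ on $[0,T]$ (Lemma~\ref{lm3.2}) and $X_z(t)$ is $\nu$-close to $\varphi_x(t)$ on that interval (Lemma~\ref{lm3.3}), then the $h$-integral is at most $(b_2\sigma + \alpha_2^2\sigma^2/2)T$; the $g$-difference integral is at most $(c_2 + \beta_2^2 \bar H)\nu T$, where $\bar H$ is the a priori path bound from Proposition~\ref{prop2.1}(ii); and the ergodic identities \eqref{e2.2}--\eqref{e2.3} make $\frac{1}{T}\int_0^T g(\varphi_x)\,dt$ close to $\lambda_1$.

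The order of the choices is: (i) pick $T = T(\eps, H)$ large enough that for every $x \in [H^{-1}, H]$ one has
\[
\PP\!\left\{\frac{1}{T}\int_0^T g(\varphi_x(t))\,dt \geq \tfrac{4\lambda_1}{5}\right\} \geq 1-\eps,
\]
and also $(\hat M/\eps)\sqrt{T(H+1)} \leq \lambda_1 T/5$ for the martingale bound of Lemma~\ref{lm3.4}; (ii) extract $\bar H = \bar H(\eps,H,T)$ from Proposition~\ref{prop2.1}(ii); (iii) fix $\nu, \sigma_0 > 0$ small so that $(c_2 + \beta_2^2\bar H)\nu + b_2\sigma_0 + \alpha_2^2\sigma_0^2/2 < 2\lambda_1/5$; (iv) invoke Lemma~\ref{lm3.3} with parameters $(H, T, \eps, \nu)$ to obtain $\sigma \leq \sigma_0$; (v) invoke Lemma~\ref{lm3.2} with $(T, \eps, \sigma)$ to obtain $\delta_0$. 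Let $\hat\Omega^z$ be the intersection of the four good events from Lemmas~\ref{lm3.2}, \ref{lm3.3}, \ref{lm3.4}, and the ergodic event above. A union bound gives $\PP(\hat\Omega^z) \geq 1-4\eps$, and on $\hat\Omega^z$
\[
\ln Y_z(T) - \ln y \;\geq\; \tfrac{4\lambda_1}{5}T - \tfrac{2\lambda_1}{5}T - \tfrac{\lambda_1}{5}T \;=\; \tfrac{\lambda_1}{5}T,
\]
which is precisely the inequality defining $\hat\Omega^z$.

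The main obstacle is achieving uniformity in the ergodic step over $x \in [H^{-1}, H]$: \eqref{e2.2} provides only pointwise a.s.\ convergence, and one must upgrade this to a bound that is uniform in probability over a compact set of initial data. I would handle this by combining continuity of $x \mapsto \varphi_x(\cdot)$ (a Gronwall argument applied to the difference $\varphi_x - \varphi_{x'}$ in \eqref{e2.1}) with an $L^2$ control on the time average via moment estimates of the boundary diffusion, so that the displayed probability is small uniformly in $x \in [H^{-1}, H]$ once $T$ is large. A secondary bookkeeping point is that $\bar H$ depends on $T$ through Proposition~\ref{prop2.1}(ii), so $\nu$ must be selected only after $T$ has been fixed; the sequencing above is arranged to keep the choice of constants consistent.
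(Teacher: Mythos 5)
Your overall architecture coincides with the paper's: the same It\^o decomposition of $\ln Y_z(T)-\ln y$, the same four auxiliary events built from Lemmas \ref{lm3.2}, \ref{lm3.3}, \ref{lm3.4} and an ergodic event, and the same final arithmetic $\tfrac{4\lambda_1}{5}-\tfrac{2\lambda_1}{5}-\tfrac{\lambda_1}{5}=\tfrac{\lambda_1}{5}$. The one step you flag as the ``main obstacle'' --- uniformity of the ergodic lower bound over $x\in[H^{-1},H]$ --- is, however, exactly where your proposed patch does not go through, and where the paper does something different. You suggest combining Gronwall continuity of $x\mapsto\varphi_x$ with ``$L^2$ control on the time average.'' The latter is not available here: the integrand $g(\phi)=a_2-c_2\phi-\tfrac{\beta_2^2}{2}\phi^2$ satisfies $g^2(\phi)\sim\tfrac{\beta_2^4}{4}\phi^4$, while the invariant density behaves like $f_1^*(\phi)\sim c_1^*\phi^{-4}$ at infinity, so $\int g^2\,d\pi_1^*=\infty$ (this is why \eqref{e2.2} is restricted to $p<3$). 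A variance or CLT-type bound on $\tfrac1T\int_0^T g(\varphi_x)\,dt$ therefore cannot be extracted from the stated ergodic theorem, and the Gronwall modulus of continuity degrades exponentially in $T$ precisely as the averaging time must grow, so the grid argument also needs more care than you give it.

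The paper's resolution is a monotone-coupling trick that sidesteps all of this: by the comparison theorem for scalar SDEs, $\varphi_x(t)\leq\varphi_H(t)$ a.s.\ for every $x\in[H^{-1},H]$, and the map $\phi\mapsto a_2-c_2(\phi+\nu)-\tfrac{\beta_2^2}{2}(\phi+\nu)^2$ is decreasing on $(0,\infty)$; hence a single ergodic statement for the one initial point $x=H$ yields the lower bound $\tfrac{3\lambda_1}{5}T$ uniformly over the whole interval. The same monotonicity is used a second time to pass from $X_z(t)$ to $\varphi_x(t)+\nu$ inside $g$ (since $X_z\leq\varphi_x+\nu$ on the event of Lemma \ref{lm3.3}), which is why the paper never needs your Lipschitz bound $(c_2+\beta_2^2\bar H)\nu$ --- a bound that, as written, requires intersecting with a fifth event $\{X_z,\varphi_x\leq\bar H\}$ and would push your union bound to $1-5\eps$ rather than the claimed $1-4\eps$. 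Both of these are fixable cosmetically, but the uniformity step needs the comparison argument (or an equivalent) to be a complete proof.
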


\begin{proof}
From \eqref{e2.3}, it
 can be proved that $$\int_0^\infty \Big(a_2-c_2(\phi+\nu)-\dfrac{\beta_2^2}2(\phi+\nu)^2\Big)\pi_1^*(d\phi)\geq \dfrac{4\lambda_1}5$$ for sufficiently small $\nu$.
Let $\hat M$ be as in Lemma \ref{lm3.4}.
By the ergodicity of $\varphi(t)$ (see \eqref{erg}),
there is $T=T(\eps, H)>\dfrac{25\hat M^2H}{\eps^2\lambda_1^2}$ such that
$$\PP\left\{\dfrac1T\int_0^T \Big(a_2-c_2(\varphi_{H}(t)+\nu)-\dfrac{\beta_2^2}2(\varphi_{H}(t)+\nu)^2\Big)dt\geq \dfrac{3\lambda_1}5\right\}\geq 1-\eps.$$
By the uniqueness of solution, $\varphi_{x }(t)\leq\varphi_{H}(t)$ a.s. for all $x \in[H^{-1},H]$. As a result, $\PP(\Omega^{z}_2)\geq 1-\eps$ where
$$\Omega^{z}_2=\left\{\int_0^T \Big(a_2-c_2(\varphi_{x }(t)+\nu)-\dfrac{\beta_2^2}2(\varphi_{x }(t)+\nu)^2\Big)dt\geq \dfrac{3\lambda_1}5T\right\}\geq 1-\eps.$$
In view of Lemma \ref{lm3.3},
we can choose $\sigma=\sigma(\eps, H)>0$ such that $b_1\sigma+\dfrac{\alpha_2^2}2\sigma^2<\dfrac{\lambda}5$ and
 $$\PP(\Omega^{z }_3)\geq 1-\eps \text{ where } \Omega^{z }_3=\{|\varphi_{x }(t)-X_{z }(t)|<\nu\,\forall 0\leq t\leq T\wedge\tau_{z }^\sigma \}.$$
By virtue of Lemma \ref{lm3.2}, there is a $\delta_0=\delta_0(\eps, H)$ satisfying that for all $z \in[H^{-1},H]\times(0,\delta_0]$,
$$\PP(\Omega^{z }_4)\geq 1-\eps \text{ where }\Omega^{z }_4=\{\tau_{z }^\sigma \geq T\}.$$
Since $T>\dfrac{25\hat M^2H}{\eps^2\lambda_1^2}$, it follows from Lemma \ref{lm3.4} that
$$\PP(\Omega^{z }_5)\geq 1-\eps \text{ where } \Omega^{z }_5=\left\{\left|\int_0^T\alpha_2 Y_{z }(t)dB_3(t)+\beta_2 X_{z }(t)dB_2(t)\right|\leq \dfrac{\lambda_1}5T\right\}.$$
For $z \in [H^{-1},H]\times(0,\delta_0]$ and $\omega\in\hat\Omega^{z }=\cap_{i=2}^5\Omega^{z }_i$ we have
$$
\begin{aligned}
\ln Y_{z } (T)-\ln y \geq& \int_0^T \Big(a_2-c_2X_{z }(t)-\dfrac{\beta_2}2X^2_{z }(t)\Big)dt-b_2\int_0^TY_{z }(t)dt-\dfrac{\alpha^2_2}2\int_0^TY_z^2(t)dt\\
&-\left|\int_0^T\alpha_2 Y_{z }(t)dB_3(t)+\beta_2 X_{z }(t)dB_2(t)\right|\\
\geq&\int_0^T \Big(a_2-c_2[\varphi_{x }(t)+\nu]-\dfrac{\beta_2^2}2\big([\varphi_{x }(t)+\nu]\big)^2\Big)dt-\dfrac{2\lambda_1}{5}T\geq\dfrac{\lambda_1}{5}T.
\end{aligned}
$$
The proof is complete by noting that $\PP(\hat\Omega^z)=\PP\left(\cap_{i=2}^5\Omega^{z }_i\right)>1-4\eps$.
\end{proof}

\begin{prop}\label{prop3.2}
Suppose that $\lambda_1>0$. Then, for any $\Delta>0$, there are $T=T(\Delta)$ and $\delta_2=\delta_2(\Delta)>0$ such that
$$\limsup\limits_{n\to\infty}\dfrac1n\sum_{k=0}^{n-1}\PP\{Y_{z_0}(kT)\leq\delta_2\}\leq\Delta,\,\forall z_0\in\R^{2,\circ}_+$$
\end{prop}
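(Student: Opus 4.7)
I will establish this via a Foster--Lyapunov inequality for the skeleton chain $(Z(kT))_{k\ge 0}$. The key input is Proposition \ref{prop3.1}: whenever $Y(kT)\le\delta_0$ and $X(kT)\in[H^{-1},H]$, a single step of length $T$ pushes $\ln Y$ upward by at least $\lambda_1T/5$ with conditional probability $\ge 1-4\eps$. Translating this into a one-step drift inequality for a carefully chosen nonnegative test function $V$ of $Y$ (large when $Y$ is small) of the form
\[
\E[V(Y((k+1)T))-V(Y(kT))\mid\F_{kT}]\le -c_1\,\1_{G_k}+c_2,\qquad G_k=\{Y(kT)\le\delta_2,\,X(kT)\in[H^{-1},H]\},
\]
with $c_1\gg c_2>0$, and then telescoping, taking expectations, and dividing by $n$, yields $\limsup_n(1/n)\sum_k\PP(G_k)\le c_2/c_1$. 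Combined with an $X$-tightness estimate this gives the desired bound.

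\textbf{Parameter choice and drift computation.} Given $\Delta>0$, fix a small $\eps>0$ to be quantified at the end. Using Proposition \ref{prop2.1}(i) and Chebyshev's inequality applied to $(x+y)+(x+y)^{-1}$, pick $H>1$ large enough that $\PP\{X(kT)\notin[H^{-1},H]\}\le\eps$ for all $k\ge k_0$ (in the small-$Y$ regime $X+Y\approx X$, and both $\PP\{X+Y\ge H\}$ and $\PP\{X+Y\le H^{-1}\}$ are controlled by Markov's inequality applied to the eventual bounds $\E(X+Y),\E(X+Y)^{-1}\le M_0$). Apply Proposition \ref{prop3.1} to obtain $T$ and $\delta_0$, and set $\delta_2=\delta_0\,e^{-\lambda_1T/5}$. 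For the test function, the natural choices are either the truncated log $V(y)=(\ln(\delta_0/y))^+$ or the smooth $V(y)=y^{-\theta}$ for small $\theta>0$; in either case, on the good event of Proposition \ref{prop3.1} one obtains a one-step decrease of $V$ of order $\lambda_1T/5$ (respectively $\theta\lambda_1 T\cdot Y^{-\theta}/5$), while on the bad subevent of conditional probability $\le 4\eps$ the estimate \eqref{e3.1} combined with the uniform bound $\E(1+\|Z(kT)\|)^{1/p^*}\le C$ from Proposition \ref{prop2.1}(i) yields an error of order $\eps^{1/q^*}T$.

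\textbf{Main obstacle.} The delicate step is controlling the upward drift of $V$ on the complementary event $\{Y(kT)>\delta_2\}\cup\{X(kT)\notin[H^{-1},H]\}$, particularly on $\{Y(kT)>\delta_0\}$ where $V$ vanishes but can become positive in a single step if $Y$ crashes; a naive bound produces a contribution of order $n$, which ruins the averaging. The resolution is either (i) to truncate $V$ at a large cutoff $K$ and carefully exploit the Hölder gain $\PP(A)^{1/q^*}$ in \eqref{e3.1} to absorb small-probability failures, or (ii) to take the smooth $V(y)=y^{-\theta}$ and, via It\^o's formula together with the ergodic identity \eqref{e2.3} for $\lambda_1$ and the comparison $X\approx\varphi_x$ supplied by Lemma \ref{lm3.3}, verify that the drift coefficient of $V$ is approximately $-\theta\lambda_1\,Y^{-\theta}$ in the small-$Y$ region and bounded elsewhere, with all residual terms absorbed into the acceptable $c_2$. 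Once the Foster--Lyapunov inequality is established with $c_2/c_1<\Delta/2$ (arranged by taking $\eps$ small), the inequality $\PP\{Y(kT)\le\delta_2\}\le\PP(G_k)+\eps$ and passage to $\limsup$ complete the proof.
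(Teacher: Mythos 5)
Your overall strategy --- a Foster--Lyapunov drift inequality for the $T$-skeleton with a truncated-logarithm test function, negative drift on the thin strip supplied by Proposition \ref{prop3.1}, and error terms controlled by \eqref{e3.1} together with Proposition \ref{prop2.1}(i) --- is exactly the paper's argument; the paper's test function is $U(y)=(\ln H-\ln y)\vee L_1$, which is your option (i). You also correctly locate the delicate point: the upward drift of $V$ at starting points where $V$ vanishes. But your proposed resolution does not close that gap as stated. With $V(y)=(\ln(\delta_0/y))^+$ activating exactly at $\delta_0$, a starting point $y$ just above $\delta_0$ gives $\E V(Y_{z}(T))=\E(\ln\delta_0-\ln Y_{z}(T))^+\le\E|\ln Y_{z}(T)-\ln y|$, and by \eqref{e3.2} this is only $O(T)$, i.e.\ an $O(1)$ per-step upward drift after dividing by $T$, not $O(\eps)$. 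The event $\{Y_{z}(T)<\delta_0\}$ is \emph{not} a small-probability event for such $y$, so the H\"older gain $\PP(A)^{1/q^*}$ in \eqref{e3.1} cannot be invoked there; and truncating $V$ from above by a cutoff $K$ is irrelevant, since the problem sits at the bottom of $V$'s range, not the top. The resulting constant $c_2$ in your drift inequality is then of order one, and $c_2/c_1$ cannot be made $\le\Delta$.

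The missing idea is a buffer threshold. The paper sets $\Lambda=\theta_2(1+2H)^{1/p^*}\eps^{-1}$ and inserts $\delta_1=\delta_0e^{-\Lambda T}$ strictly between $\delta_2$ and $\delta_0$, so that $U$ is constant for $y\ge\delta_1$. Then, starting from $y>\delta_0$, activating the non-constant part of $U$ in one step requires a logarithmic drop exceeding $\Lambda T$, which by the Chebyshev bound \eqref{e3.3} (leading to \eqref{e3.11a}) has probability at most $5\eps$; only on that small event is \eqref{e3.1} applied, producing the acceptable $O(\eps^{1/q^*})$ contribution. Symmetrically, the good event of Proposition \ref{prop3.1} must be intersected with $\{\ln Y_{z}(T)-\ln y\le\Lambda T\}$ and one must take $\delta_2=\delta_1e^{-\Lambda T}$ (rather than your $\delta_0e^{-\lambda_1T/5}$), so that from $y\le\delta_2$ the chain cannot overshoot into the flat region of $U$ in one step and the full decrease $\lambda_1T/5$ is registered. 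With these two-sided buffers in place, your telescoping argument, the choice of $H$ large and then $\eps$ small, and the final inequality $\PP\{Y_{z_0}(kT)\le\delta_2\}\le\PP\{Z_{z_0}(kT)\in D_3\}+\PP\{Z_{z_0}(kT)\notin D\}$ all go through exactly as you outline and reproduce \eqref{e3.17}.
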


\begin{proof}
Let $\eps=\eps(\Delta)\in(0,1)$ and $H=H(\Delta)>1$  be chosen later.
Put $\Lambda={\theta_2(1+2|H|)^{1/p^*}}\eps^{-1}$ where $\theta_2$ is as in \eqref{e3.1}.
As a result of \eqref{e3.3} and Proposition \ref{prop3.1}, there are $\delta_0\in(0,1)$ and $T>1$ such that $\PP\{\hat\Omega_0^{z}\}>1-5\eps\,\forall z\in[H^{-1},H]\times(0,\delta_0]$ where
$\hat\Omega_0^{z}=\big\{\dfrac{\lambda_1}5T\leq\ln Y_{z }(T)-\ln y\leq\Lambda T\big\}.$
Let $L_1=\Lambda T+\ln \dfrac{H}{\delta_0}$.
Since $|\ln H-\ln(Y_z (T))|\leq |\ln H-\ln y|+\big|\ln Y_{z }(T)-\ln y \big|,$
it follows from \eqref{e3.3} that if $z\in [H^{-1},H]\times(\delta_0, H]$,
\begin{equation}\label{e3.11a}
\PP\{|\ln H-\ln(Y_z (T))|\geq L_1\}\leq \PP\{\big|\ln Y_{z }(T)-\ln y \big|\geq \Lambda T\}\leq5\eps.
\end{equation}
Let $\delta_1,\delta_2$ satisfy $L_1=\ln H-\ln \delta_1$ and $L_2:=L_1+\Lambda T=\ln H-\ln \delta_2.$
Note that $\delta_2<\delta_1<\delta_0.$
Define $U(y)=\big(\ln H-\ln y\big)\vee L_1.$
Clearly,
\begin{equation}\label{e3.11}
U(y_1)-U(y_2)\leq |\ln(y_1)-\ln(y_2)|.
\end{equation}
We now estimate $\dfrac1T\big(\E U(Y_{z}(T))-U(y)\big)$ for different
$z$.
First, for any $z\in\R^{2,\circ}_+$, using \eqref{e3.1} and \eqref{e3.11} we have for
$\omega\in\hat\Omega_0^{z,c}=\Omega\setminus\hat\Omega_0^z$ that
\begin{equation}\label{e3.11b}
\dfrac1T\E\1_{\hat\Omega_0^{z,c}}\big(U(Y_{z}(T))-U(y)\big)\leq \dfrac1T\E\1_{\hat\Omega_0^{z,c}}\big|\ln Y_{z}(T)-\ln y\big|\leq
\theta_2(1+2H)^{1/p^*}\big(\PP(\hat\Omega_0^{z,c})\big)^{1/q^*}.
\end{equation}

If $z\in D_3:=[H^{-1},H]\times(0,\delta_2]$, $U(y)=\ln H-\ln y\geq L_1+\Lambda T.$
In $\hat\Omega^z_0$, we have $$L_1\leq \ln H-\ln y-\Lambda T\leq \ln H-\ln (Y_{z}(T))\leq \ln H-\ln y-\dfrac{\lambda_1}{5}T.$$
As a result, if $\omega\in\hat\Omega_0^z$,
\begin{equation}\label{e3.11a-a}
\dfrac1T\big(U(Y_{z}(T))-U(y)\big)=\dfrac1T\big[\big(\ln H-\ln (Y_{z}(T))\big)-(\ln H-\ln y)\big]\leq -\dfrac{\lambda_1}{5}.
\end{equation}
Combining  \eqref{e3.11b} and \eqref{e3.11a-a} yields
\begin{equation}\label{e3.12}
\dfrac1T\big(\E U(Y_{z}(T))-U(y)\big)\leq -(1-\eps)\dfrac{\lambda_1}{5}+\theta_2(1+2H)^{1/p^*}\big(5\eps\big)^{1/q^*}\,\forall\,z\in D_3.
\end{equation}

If $z\in D_2:=[H^{-1},H]\times(\delta_2,\delta_1]$, we have $U(y)=\ln H-\ln y\geq L_1$.
If $\omega\in\hat\Omega_0^z$, $\ln H-\ln (Y_{z}(T))\leq \ln H-\ln y=U(y)$.
As a result, $U(Y_{z}(T))\leq U(y)$ in $\hat\Omega_0^z.$
This and \eqref{e3.11b} imply
\begin{equation}\label{e3.13}
\dfrac1T\big(\E U(Y_{z}(T))-U(y)\big)\leq \theta_2(1+2H)^{1/p^*}\big(5\eps\big)^{1/q^*}\,\forall\,z\in D_2.
\end{equation}

If $z\in D_1:=[H^{-1},H]\times(\delta_1,\delta_0]$
and $\omega\in\hat\Omega_z$, we have $\ln H-\ln (Y_{z}(T))\leq \ln H-\ln y\leq L_1$, which implies that $U(Y_{z}(T))=L_1=U(y).$
Consequently, we also have
\begin{equation}\label{e3.14}
\dfrac1T\big(\E U(Y_{z}(T))-U(y)\big)\leq \theta_2(1+2H)^{1/p^*}\big(5\eps\big)^{1/q^*}\,\forall\,z\in D_1.
\end{equation}

If $z\in D_0:=[H^{-1},H]\times(\delta_0,H]$,
we have $U(z)=L_1$.
In view of \eqref{e3.11a}, \eqref{e3.11}, and \eqref{e3.1} we have
\begin{equation}\label{e3.15}
\dfrac1T\big(\E U(Y_{z}(T))-U(y)\big)\leq \theta_2(1+2H)^{1/p^*}\big(5\eps\big)^{1/q^*}\,\forall\,z\in D_0.
\end{equation}

For any $z_0\in\R^{2,\circ}_+$, it follows from the Markov property of $Z(t)$ that
$$
\dfrac{1}{T}\E\big(U(Y_{z_0}(kT+T))-U(Y_{z_0}(kT)\big)\leq \int_{\R^{2,\circ}_+}\PP\{Z_{z_0}(kT)\in dz\}\left[\dfrac1T\big(\E U(Y_{z}(T))-U(y)\big)\right].
$$
Subsequently, letting $D:=[H^{-1},H]\times(0,H]=\cup_{i=0}^3D_i$ and using \eqref{e3.1}, \eqref{e3.12}, \eqref{e3.13}, \eqref{e3.14}, and \eqref{e3.15} we have
\begin{equation}\label{e3.15a}
\begin{aligned}
\dfrac{1}{T}\E\big(&U(Y_{z_0}(kT+T))-U(Y_{z_0}(kT)\big)\\
&\leq \big(-(1-\eps)\dfrac{\lambda_1}{5}+\eps_1\big)\PP\{Z_{z_0}(kT)\in D_3\}\\
&\quad+\eps_1\PP\{Z_{z_0}(kT)\in D\setminus D_3\}+\E\1_{\{Z_{z_0}(kT)\notin D\}}(1+|Z_{z_0}(kT)|)^{1/p^*}\\
&\leq -(1-\eps)\dfrac{\lambda_1}{5}\PP\{Z_{z_0}(kT)\in D_3\}+\big(\PP\{Z_{z_0}(kT)\notin D\}\big)^{1/q^*}(1+\E|Z_{z_0}(kT)|)^{1/p^*}+\eps_1,
\end{aligned}
\end{equation}
where $\eps_1=\theta_2(1+2H)^{1/p^*}\big(5\eps\big)^{1/q^*}.$
In view of  Proposition \ref{prop2.1} part (i),
\begin{equation}\label{e3.16}\barray
\ad \disp
\limsup\limits_{k\to\infty}\big(\PP\{Z_{z_0}(kT)\notin D\}\big)^{1/q^*}(1+\E|Z_{z_0}(kT)|)^{1/p^*}\\
\aad \ \leq \limsup\limits_{k\to\infty}\dfrac{1+2\E V(Z_{z_0}(kT))}{H^{1/q^*}}\leq\dfrac{1+2M_0}{H^{1/q^*}}
, \earray\end{equation}
and
\begin{equation}\label{e3.16b}\limsup\limits_{k\to\infty}\PP\{Z_{z_0}(kT)\notin D\}\leq \limsup\limits_{k\to\infty}\dfrac{\E V(Z_{z_0}(kT))}{H}\leq \dfrac{M_0}{H}.
\end{equation}
Clearly,
\begin{equation}\label{e3.16a}
\liminf\limits_{n\to\infty}\dfrac{1}{nT}\sum_{k=0}^{n-1}\E\big[U(Y_{z_0}(kT+T))-U(Y_{z_0}(kT)\big]=\liminf\limits_{n\to\infty}\dfrac{\E U(Y_{z_0}(nT))}{nT}\geq0.
\end{equation}
We derive from \eqref{e3.15a}, \eqref{e3.16}, and \eqref{e3.16a} that
\begin{equation}\label{e3.17}
\begin{aligned}
\limsup\limits_{n\to\infty}\dfrac{1}{n}\sum_{k=0}^{n-1}\PP\{Z_{z_0}(kT)\in D_3\}\leq \dfrac{5}{(1-\eps)\lambda_1}\left[\eps_1+\dfrac{1+2M_0}{H^{1/q^*}}\right].
\end{aligned}
\end{equation}
Note that $\PP\{Y_{z_0}(kT)\leq\delta_2\}\leq \PP\{Z_{z_0}(kT)\in D_3\}+\PP\{Z_{z_0}(kT)\notin D\}.$
In view of \eqref{e3.16b} and \eqref{e3.17},
by choosing $H=H(\Delta)$ sufficiently large and then choosing $\eps=\eps(\Delta)$ sufficiently small, we
obtain the desired result.
\end{proof}

\begin{proof}[Proof of Theorem \ref{thm2.1}]
Let any $\eps>0$. Since $\lambda_2>0$,
similar to Proposition \ref{prop3.2},
there exist  $T'>1$ and $\delta_2'>0$ such that
$$\limsup\limits_{n\to\infty}\dfrac1n\sum_{k=0}^{n-1}\PP\{X_{z_0}(kT')\leq\delta_2'\}\leq\Delta,\,z_0\in\R^{2,\circ}_+.$$
Moreover, it can be seen in the proof of Proposition \ref{prop3.2} that we can choose any sufficiently large $T'$ and sufficiently small $\delta_2'$ satisfying the above estimate. As a result, without loss of generality, we can choose $T'=T$ and $\delta_2'=\delta_2$.
Consequently,
$$\limsup\limits_{n\to\infty}\dfrac1n\sum_{k=0}^{n-1}\PP\{|X_{z_0}(kT)|\wedge|Y_{z_0}(kT)|\leq\delta_2\}\leq2\Delta,\,z_0\in\R^{2,\circ}_+.$$
This
together with part (i) of Proposition \ref{prop2.1}  implies that
there is a compact set $G\subset\R^{2,\circ}_+$ such that
$$\liminf\limits_{n\to\infty}\dfrac1n\sum_{k=0}^{n-1}\PP\{Z_{z_0}(kT)\in G\}\geq 1-3\Delta,\,z_0\in\R^{2,\circ}_+.$$
Thanks to (ii) of Proposition \ref{prop2.1}, there is an $\ell>1$ such that $\PP\{\ell^{-1}\leq X_{z}(t),Y_{z}(t)\leq\ell\}\geq 1-\Delta$ for all $z\in G, t\leq T$.
By the Markov property,
$$\PP\{\ell^{-1}\leq X_{z_0}(kT+t),Y_{z_0}(kT+t)\leq\ell\}\geq (1-\Delta)\PP\{Z_{z_0}(kT)\in G\}\,\forall\, t\leq T.$$
Thus, for any $z_0\in\R^{2,\circ}_+$,
$$\liminf\limits_{n\to\infty}\dfrac1{nT}\int_0^{nT}\PP\{\ell^{-1}\leq X_{z_0}(t),Y_{z_0}(t)\leq\ell\}dt\geq (1-3\Delta)(1-\Delta)\geq 1-4\Delta.$$
It
implies that
$$\liminf\limits_{t\to\infty}\dfrac1{t}\int_0^{t}\PP\{\ell^{-1}\leq X_{z_0}(s),Y_{z_0}(s)\leq\ell\}ds\geq 1-4\Delta,$$
which implies the existence of an invariant probability measure.
The rest of the
results of Theorem \ref{thm2.1} therefore follows from the non-degeneracy of the diffusion; see \cite{LB} or \cite{RK}.
\end{proof}

\section{Competitive Exclusion}\label{sec:com}
To prove Theorem \ref{thm2.1}  (the coexistence), we need only estimate the behavior of the solution near the
boundary for a sufficiently long but finite time.
In contrast, to prove Theorems \ref{thm2.2} and \ref{thm2.3}, we have to estimate the difference $\varphi_{x }(t)-X_{z }(t)$ in an infinite interval.
Note that in the deterministic case, the inverse $x^{-1}(t)$ of the solution to a logistic equation
$${d x(t) \over dt}= x(t)(a_1-b_1x(t))$$
satisfies a linear differential equation which is much easier to work with. Motivated by this, we
consider the difference $\varphi_{x }^{-1}(t)-X_{z }^{-1}(t).$

\begin{lem}\label{lm4.1}
For any $H, T>1,\eps>0, \gamma$, there is $\tilde\sigma>0$ such that $\forall z \in [H^{-1},H]\times(0,\tilde\sigma]$
$$\PP\left\{\left|\dfrac1{\varphi_{x }(t)}-\dfrac1{X_{z }(t)}\right|<\gamma\,\forall 0\leq t\leq T\wedge\tau_{z }^{\tilde\sigma}\right\}\geq 1-\eps.$$
\end{lem}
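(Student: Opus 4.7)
The strategy is to mimic the argument of Lemma \ref{lm3.3}, but applied to the reciprocal $V_t := 1/\varphi_x(t) - 1/X_z(t)$. The motivation, as hinted in the paragraph preceding the lemma, is that the inverse of a logistic-type SDE satisfies an equation whose drift contains a stabilizing (negative) coefficient on $V$, so the resulting $L^2$ estimate will yield a bound of order $\tilde\sigma$ that does not deteriorate on the random interval $[0, T\wedge\tau_z^{\tilde\sigma}]$.

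First, by part (ii) of Proposition \ref{prop2.1} there is a constant $\bar H = \bar H(H,T)>1$ such that
\[
\PP\Big\{\varphi_x(t), X_z(t) \in [\bar H^{-1}, \bar H]\ \forall t\in[0,T]\Big\} \geq 1-\eps/2
\]
uniformly in $z\in[H^{-1},H]\times(0,1]$. Set
\[
\xi_z := T \wedge \tau_z^{\tilde\sigma} \wedge \inf\big\{u : \varphi_x(u)\vee X_z(u) \geq \bar H \text{ or } \varphi_x(u)\wedge X_z(u) \leq \bar H^{-1}\big\}.
\]
A direct application of It\^o's formula to $1/\varphi$ and $1/X$, together with the identity $\varphi - X = -\varphi X V$, gives
\[
dV_t = \Big[-\big(a_1 + \alpha_1^2 \varphi_x(t) X_z(t)\big)V_t - \frac{c_1 Y_z(t)}{X_z(t)} - \beta_1^2\frac{Y_z^2(t)}{X_z(t)}\Big]dt + \beta_1\frac{Y_z(t)}{X_z(t)}dB_2(t),
\]
where the $-\alpha_1\,dB_1$ martingale contributions from $d(1/\varphi)$ and $d(1/X)$ cancel exactly.

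On $[0,\xi_z]$ we have $\varphi_x, X_z \in [\bar H^{-1}, \bar H]$ and $Y_z\leq\tilde\sigma$, so the coefficient of $V_t$ in the drift is bounded by $a_1+\alpha_1^2\bar H^2$, the remaining drift terms are bounded by $(c_1\bar H+\beta_1^2\bar H)\tilde\sigma$ (assuming $\tilde\sigma\leq 1$), and the diffusion coefficient is bounded by $|\beta_1|\bar H\tilde\sigma$. Writing $V_t^2$ via It\^o's formula, taking the supremum over $s\leq t$, and controlling the martingale part with Burkholder-Davis-Gundy — in direct parallel with the derivation of \eqref{e3.6}--\eqref{e3.10} — yields an inequality of the form
\[
\E\sup_{s\leq t}V_{s\wedge\xi_z}^2 \leq \tilde m\Big(\tilde\sigma^2 + \int_0^t \E\sup_{u\leq s} V_{u\wedge\xi_z}^2\,ds\Big),\qquad t\in[0,T],
\]
for some $\tilde m=\tilde m(\bar H,T)>0$. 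Gronwall's inequality then gives $\E\sup_{s\leq T}V_{s\wedge\xi_z}^2 \leq \tilde m\tilde\sigma^2 e^{\tilde m T}$, and Chebyshev's inequality shows that
\[
\PP\Big\{\sup_{s\leq \xi_z}|V_s|\geq\gamma\Big\} \leq \tilde m\tilde\sigma^2 e^{\tilde m T}/\gamma^2 < \eps/2
\]
for $\tilde\sigma$ sufficiently small. Intersecting with the confinement event from the first paragraph forces $\xi_z = T\wedge\tau_z^{\tilde\sigma}$, yielding the claim on an event of probability at least $1-\eps$.

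The only real subtlety is the It\^o computation that produces the above SDE for $V$: it is essential that the $dB_1$ parts of $d(1/\varphi)$ and $d(1/X)$ cancel, for otherwise the quadratic variation of $V$ would pick up an $O(1)$ contribution and the $L^2$ estimate would no longer be small in $\tilde\sigma$. Once this cancellation is observed, the remaining argument is a routine Gronwall/BDG estimate analogous to Lemma \ref{lm3.3}, with the added benefit that the drift $-(a_1+\alpha_1^2\varphi X)V_t$ is strictly dissipative and therefore contributes no destabilizing term to the Gronwall exponent.
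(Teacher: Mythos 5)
Your proof is correct, but it takes a genuinely different route from the paper's. The paper disposes of Lemma \ref{lm4.1} in three lines: by part (ii) of Proposition \ref{prop2.1} both $\varphi_x(t)$ and $X_z(t)$ stay in $[\hat H^{-1},\hat H]$ on $[0,T]$ with probability at least $1-\eps/2$, on that event $\bigl|\varphi_x^{-1}(t)-X_z^{-1}(t)\bigr|\leq \hat H^{2}\,|\varphi_x(t)-X_z(t)|$, and one then simply applies Lemma \ref{lm3.3} with $\nu=\gamma/\hat H^{2}$. You instead rederive the whole Gronwall/BDG estimate directly for $V_t=\varphi_x^{-1}(t)-X_z^{-1}(t)$, starting from the It\^o SDE for $V$; your computation is right (the $-\alpha_1\,dB_1$ terms do cancel, $V_0=0$, and your drift agrees with the paper's $f$ and $g$ in \eqref{e4.5}--\eqref{e4.6} after the substitution $\varphi-X=-\varphi X V$), and the localization at $\xi_z$ plus the confinement event correctly closes the argument. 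What your approach buys is that it is exactly the computation the paper defers to Proposition \ref{prop4.1}, where the dissipative drift $-(a_1+\alpha_1^2\varphi X)V$ and the $O(Y^2)$ forcing are essential for extending the comparison to an infinite time horizon; for the finite-horizon statement of Lemma \ref{lm4.1} itself it is more work than necessary, since the crude reduction to Lemma \ref{lm3.3} already suffices. The one presentational caveat is that your displayed SDE groups the forcing as $-c_1Y/X-\beta_1^2Y^2/X$ rather than $-(c_1Y+\beta_1^2Y^2)/X$ in a way that could be misread, but the bounds you state for it on $[0,\xi_z]$ are the correct ones, so nothing breaks.
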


\begin{proof}
In view of (ii) of Proposition \ref{prop2.1}, we can find $\hat H=\hat H(\eps,H, T)>1$ such that for all $z \in [H^{-1},H]\times[0,H]$
$$\PP\{\hat H^{-1}\leq X_{z }(t), \varphi_z(t)\leq \hat H\,\forall t\leq T\}\geq 1-\dfrac{\eps}2.$$
When $\hat H^{-1}\leq X_{z }(t), \varphi_z(t)\leq \hat H$, we have $|\varphi_{x }^{-1}(t)-X_{z }^{-1}(t)|\leq \hat H^2|\varphi_x(t)-X_z(t)|$.
Applying Lemma \ref{lm3.3}, we obtain the desired result.
\end{proof}

\begin{prop}\label{prop4.1}
Assume that $\lambda_1<0$. For any $H>1, \eps, \gamma>0, \lambda\in (0,-\lambda_1)$, there is a $\tilde\delta>0$ such that
$$\PP\left(\left\{\limsup\limits_{t\to\infty}\dfrac{\ln Y_{z}(t)}t\leq-\lambda\right\}\cap\left\{\left|\dfrac1{\varphi_{x }(t)}-\dfrac1{X_{z }(t)}\right|\leq\gamma\,\forall t\geq0\right\}\right)\geq 1-5\eps\,\forall\,z\in[H^{-1},H]\times[0,\tilde\delta].$$
\end{prop}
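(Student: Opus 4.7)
The plan is a bootstrap: if $Y_z(t)$ stays small for all $t\geq 0$, then $1/X_z$ remains close to $1/\varphi_x$, which via the ergodicity of $\varphi_x$ forces the drift of $\ln Y_z$ to be nearly $\lambda_1<0$, so $Y_z$ decays exponentially and thus indeed stays small. The challenge is to make this circular argument rigorous with uniform-in-$t$ control on $[0,\infty)$.

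First I would assemble two one-shot controls valid for all $t\geq 0$. The exponential martingale inequality \eqref{emi} applied to $\int_0^t(\alpha_2 Y_z\,dB_3+\beta_2 X_z\,dB_2)$ with a small constant $a>0$ produces an event $\Omega_{\text{mart}}$ with $\PP(\Omega_{\text{mart}})\geq 1-\eps$, on which the integral is dominated by $(a/2)\int_0^t(\alpha_2^2 Y_z^2+\beta_2^2 X_z^2)\,ds+a^{-1}\ln(1/\eps)$ uniformly in $t$. The ergodic theorem \eqref{erg} furnishes $\Omega_{\text{erg}}$ with $\PP(\Omega_{\text{erg}})\geq 1-\eps$ on which $t^{-1}\int_0^t\bigl(a_2-c_2\varphi_x-\tfrac{\beta_2^2}{2}\varphi_x^2\bigr)\,ds\leq \lambda_1+\eta$ for every $t\geq T_0$, with $\eta>0$ chosen so that $\lambda_1+\eta+O(\gamma+\tilde\sigma)<-\lambda$.

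Next I introduce the stopping time $\tau=\inf\{t\geq 0:|1/\varphi_x(t)-1/X_z(t)|>\gamma\ \text{or}\ Y_z(t)>\tilde\sigma\}$, with $\tilde\sigma$ to be fixed shortly via Lemma \ref{lm4.1}. On $\Omega_{\text{mart}}\cap\Omega_{\text{erg}}$, for $t\leq\tau$ we have $|X_z-\varphi_x|\leq \gamma X_z\varphi_x$, so It\^{o}'s formula for $\ln Y_z$ lets us replace $X_z$ by $\varphi_x$ in the drift up to controlled errors from $\gamma$, $\tilde\sigma$, and the martingale, yielding
$$\ln Y_z(t\wedge\tau)-\ln y\leq -\lambda(t\wedge\tau)+C_{\eps},\qquad t\geq T_0.$$
Choosing $\tilde\delta$ small enough that $\ln\tilde\delta+C_{\eps}\leq\ln(\tilde\sigma/2)$, this inequality forces $Y_z(t\wedge\tau)\leq\tilde\sigma/2$ for every $t\geq 0$, so the $Y$-component of $\tau$ is never triggered; in particular $\limsup_{t\to\infty}\ln Y_z(t)/t\leq -\lambda$.

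To rule out the $|1/\varphi_x-1/X_z|$-component of $\tau$ ever triggering, I iterate Lemma \ref{lm4.1} over intervals $[kT,(k+1)T]$. The strong Markov property at $kT$ combined with the geometric decay $Y_z(kT)\leq\tilde\delta e^{-k\lambda T}$ just established and the $O(\sigma^2/\gamma^2)$ failure estimate extracted from the proof of Lemma \ref{lm3.3} gives per-interval failure probability $O(\tilde\delta^2 e^{-2k\lambda T}/\gamma^2)$. Summing over $k$ yields total iteration failure probability $O(\tilde\delta^2/\gamma^2)$, at most $\eps$ for $\tilde\delta$ small, while two more $\eps$'s absorb a moment bound from Proposition \ref{prop2.1} and a symmetric exponential-martingale estimate. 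Altogether the exceptional probability is at most $5\eps$. The main obstacle is this last step: the per-interval failure probability must depend \emph{quantitatively} on the current size of $Y_z(kT)$, so one has to reopen the proof of Lemma \ref{lm3.3} rather than use Lemma \ref{lm4.1} as a black box, and the strong Markov property must be used carefully to reset the initial data at each $kT$.
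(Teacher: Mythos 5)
Your overall bootstrap (small $Y$ forces $1/X_z$ near $1/\varphi_x$, which via ergodicity forces $\ln Y_z$ to decay at rate close to $\lambda_1$, which in turn keeps $Y_z$ small) is the same as the paper's, and your treatment of the $Y$-component --- exponential martingale event, truncated ergodic average, the combined stopping time, and the choice of $\tilde\delta$ so small that the decay estimate prevents $Y_z$ from ever reaching $\tilde\sigma$ --- matches Lemmas \ref{lm3.2}, \ref{lm4.1} and the first half of the paper's proof of Proposition \ref{prop4.1}. The gap is in the step you yourself flag as the main obstacle: controlling $\left|1/\varphi_x(t)-1/X_z(t)\right|$ on $[0,\infty)$ by iterating the finite-horizon comparison over blocks $[kT,(k+1)T]$. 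The Gronwall bound underlying Lemma \ref{lm3.3} takes the form $\E\sup_{s\le T}\big(\varphi_x(s)-X_z(s)\big)^2\le \bar m\,(d_0^2+\sigma^2)e^{\bar m T}$ when the two processes start a distance $d_0$ apart, with $\bar m=\bar m(\bar H,T)$ not small. On block $k$ the inherited deviation $d_k$ (already of order $\gamma$) is therefore amplified by $e^{\bar m T}$, while only the new forcing is $O(e^{-2k\lambda T})$; nothing in your scheme damps the inherited deviation, and a Chebyshev bound of the form $\bar m\, d_k^2 e^{\bar m T}/p_k\le\gamma^2$ with $d_k$ of order $\gamma$ would force the per-block failure probability $p_k\ge \bar m e^{\bar m T}>1$, which is absurd. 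Resetting the boundary solution at $X_z(kT)$ on each block does not rescue the argument: the proposition asserts closeness to the single solution $\varphi_x$ started at time $0$ (and the ergodic events are built on that same $\varphi_x$), so chaining the resets back to $\varphi_x$ again requires an infinite-horizon stability estimate for the boundary diffusion that you have not supplied.

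What the paper does instead is exhibit the missing contraction explicitly. Applying It\^o's formula to $\big(1/\varphi_x(t)-1/X_z(t)\big)^2$ gives drift $f$ and diffusion $g$ as in \eqref{e4.5}--\eqref{e4.6}, and the key point is the one-sided bound $f(\phi,x,y)\le -(a_1-2c_1y-5\beta_1^2y^2)\big(1/\phi-1/x\big)^2+(\beta_1^2+c_1^2/a_1)\,y^2/\phi^2$: for $y$ small the quadratic term carries a strictly negative coefficient, so the squared deviation is a dissipative system forced by $Y_z^2/\varphi_x^2$. Combining the exponential decay $Y_z(t)\le\tilde m_1e^{-\lambda t}$ with the ergodic bound $\int_0^t\varphi_x^{-2}(s)\,ds\le 2Q_{-2}t$ and an integration by parts as in \eqref{e4.14} shows that the total accumulated forcing over all time is below $\gamma_0^2/(2\tilde m_3)$, so the deviation never reaches $\gamma_0$ on the good event and no iteration over blocks is needed. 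This dissipative structure of the reciprocal difference --- the very reason the paper compares $1/\varphi_x$ with $1/X_z$ rather than $\varphi_x$ with $X_z$ --- is the idea your proposal is missing, and without it (or some substitute uniform-in-time stability estimate) the argument does not close.
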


\begin{proof}
Consider the case
$$\lambda_1=a_2-\int_0^\infty(c_2\phi+\dfrac{\beta_2^2}2\phi^2)f^*_1(\phi)d\phi<0.$$
Let any $\lambda\in(0, -\lambda_1)$ and $d=\dfrac{-\lambda_1-\lambda}4.$
Since $$\int_0^\infty\left(c_2\phi+\dfrac{\beta_2^2}2\phi^2\right)f^*_1(\phi)d\phi=a_2-\lambda_1<\infty,$$ we can find $\eta_1,\eta_2,\eta_3\in(0,1)$ such that
$$\int_{\eta_1}^{\infty}\left(c_2(\phi-\eta_1)+
\dfrac{\beta_2^2(1-\eta_3)}2\big(\phi-\eta_1)^2\right)f^*_1(\phi)d\phi\geq a_2-\lambda_1-\vartheta=a_2+\lambda-3d$$
and
$$\int_{\eta_2^{-1}}^{\infty}\left(c_2(\phi-\eta_1)+
\dfrac{\beta_2^2(1-\eta_3)}2\big(\phi-\eta_1)^2\right)f^*_1(\phi)d\phi\leq d.$$
By the ergodicity \eqref{erg},
there is a $T_1=T_1(\eps, H)$ such that
with a probability greater than $1-\eps$, we have
$$\dfrac1t\int_0^t\1_{\{\eta_1\leq\varphi_{H^{-1}}(s)\}}\Big(c_2\big(\varphi_{H^{-1}}(s)
-\eta_1\big)+\dfrac{\beta_2^2(1-\eta_3)}2\big(\varphi_{H^{-1}}(s)-\eta_1\big)^2\Big)ds\geq a_2+\lambda-2d\,\forall t\geq T_1,$$
and
$$\dfrac1t\int_0^t\1_{\{\eta_2^{-1}\leq\varphi_{H}(s)\}}\Big(c_2\big(\varphi_{H}(s)
-\eta_1\big)+\dfrac{\beta_2^2(1-\eta_3)}2\big(\varphi_{H}(s)-\eta_1\big)^2\Big)ds\leq 2d\,\forall t\geq T_1,$$

	Combining this with the fact that $\varphi_{H^{-1}}(s)\leq\varphi_{x}(s)\leq\varphi_{H}(s)$ a.s. $\forall s\geq0$, $\forall x>H^{-1}$,  with a probability greater than $1-\eps$ we have
$$\dfrac1t\int_0^t\1_{\{\eta_1\leq\varphi_{x}(s)\}}\Big(c_2\big(\varphi_{x}(s)
-\eta_1\big)+\dfrac{\beta_2^2(1-\eta_3)}2\big(\varphi_{x}(s)-\eta_1\big)^2\Big)ds\geq a_2+\lambda-2d\,\forall t\geq T_1,$$
and
$$\dfrac1t\int_0^t\1_{\{\eta_2^{-1}\leq\varphi_{x}(s)\}}\Big(c_2\big(\varphi_{x}(s)
-\eta_1\big)+\dfrac{\beta_2^2(1-\eta_3)}2\big(\varphi_{x}(s)-\eta_1\big)^2\Big)ds\leq 2d\,\forall t\geq T_1.$$
It follows that	$\PP(\Omega_6^z)\geq1-\eps$, where
$$
\Omega_6^z=\Big\{a_2-\dfrac1t\int_0^t\1_{\{\eta_1\leq\varphi_{x}(s)\leq\eta_2^{-1}\}}\Big(c_2\big(
\varphi_{x}(s)-\eta_1\big)+\dfrac{\beta_2^2(1-\eta_3)}2\big(\varphi_{x}(s)-\eta_1\big)^2\Big)ds\leq-\lambda\,\forall t\geq T_1\Big\}.$$
Observe that the estimate $x\geq\big(\phi-\eta_1)\1_{\{\eta_1\leq\phi\leq\eta_2^{-1}\}}$ holds if $|\phi^{-1}-x^{-1}|\leq \eta_1\eta_2^2.$
Indeed,
 if $x\geq\phi$ or $\phi>\eta_2^{-1}$ or $\phi<\eta_1$, we obviously have
 $x\geq \big(\phi-\eta_1)\1_{\{\eta_1\leq\phi\leq\eta_2^{-1}\}}.$
In the case when $x<\phi\leq\eta_2^{-1}$ and $\phi>\eta_1$ and $|\phi^{-1}-x^{-1}|\leq \eta_1\eta_2^2$, we have $\phi-x\leq \eta_2^{-2}|\phi^{-1}-x^{-1}|\leq\eta_1$, which implies that
$x\geq \phi-\eta_1= \big(\phi-\eta_1)\1_{\{\eta_1\leq\phi\leq\eta_2^{-1}\}}.$
Consequently, if $\omega\in \Omega_6^{z}\cap\{\vartheta_z\geq T_1\}$,
we have
\begin{equation}\label{e4.1}
\dfrac1t\int_0^t\Big(a_2-c_2X_{z}(s)-\dfrac{\beta_2^2(1-\eta_3)}2X^2_{z}(s)\Big)ds\leq -\lambda\,\forall t\in [T_1,\vartheta_z]
\end{equation}
where $\vartheta_z=\inf\left\{t>0: \left|\dfrac1{\varphi_{x }(t)}-\dfrac1{X_{z }(t)}\right|\geq \gamma_0:=\gamma\wedge(\eta_1\eta_2^2)\right\}.$
Recall that
\begin{equation}\label{e4.2}
\begin{aligned}
\ln Y_{z }(t)=\ln y &+\int_0^{t}\Big[a_2-b_2Y_z(s)-c_2X_{z }(s)-\dfrac{\alpha^2_2}2 Y_{z }(s)-\dfrac{\beta_2^2}2X_{z }^2(s)\Big]ds\\
&+\int_0^{t}\big[\alpha_2Y_{z }(s)dB_3(s)+\beta_2X_{z }(s)dB_2(s)\big].
\end{aligned}
\end{equation}
Setting $$
\begin{aligned}
\Omega_7^{z}=\bigg\{\int_0^{t}&\big[\alpha_2Y_{z }(s)dB_3(s)+\beta_2X_{z }(s)dB_2(s)\big]\\&\leq\dfrac1{\eta_3}\ln\dfrac1\eps+ \dfrac{\eta_3}2\int_0^{t}\big[\alpha_2^2Y^2_{z }(s)+\beta_2^2X^2_{z }(s)\big]ds\,\forall\,t\geq0\bigg\},
\end{aligned}
$$
it follows from \eqref{e4.1} and \eqref{e4.2} that for $\omega\in \Omega_6^{z}\cap\Omega_7^{z}\cap\{\vartheta_z\geq T_1\}$,
we have
\begin{equation}\label{e4.3}
\ln Y_{z}(t)\leq \ln y+\dfrac1{\eta_3}\ln\dfrac1\eps -\lambda t\,\forall t\in [T_1,\vartheta_z].
\end{equation}
If $y\leq 1$, putting $\tilde m_1=\exp\left(\dfrac1{\eta_3}\ln\dfrac1\eps\right)=\dfrac{\exp(\eta_3^{-1})}\eps$, we have
\begin{equation}\label{e4.4}
Y_{z}(t)\leq \tilde m_1\exp(-\lambda t)\forall t\in [T_1,\vartheta_z] \text{ if } \omega\in \Omega_6^{z}\cap\Omega_7^{z}\cap\{\vartheta_z\geq T_1\}.
\end{equation}
Now, we estimate $\dfrac1{\varphi_{x }(t)}-\dfrac1{X_{z }(t)}$ for a
larger time interval.
It follows from It\^o's formula that
$$d\left(\dfrac1{\varphi_{x }(t)}-\dfrac1{X_{z }(t)}\right)^2=f(\varphi_{z }(t),X_{z }(t), Y_{z }(t))dt+g(\varphi_{z }(t),X_{z }(t), Y_{z }(t))dB_2(t),$$
where
\begin{equation}\label{e4.5}
\begin{aligned}
f(\phi, x, y)=&-2a_1\left(\dfrac1\phi-\dfrac1x\right)^2+2\alpha_1^2(\phi-x)\left(\dfrac1\phi-\dfrac1x\right)-2(c_1y+\beta_1^2y^2)\dfrac1x\left(\dfrac1\phi-\dfrac1x\right)+\beta_1^2y^2\dfrac1{x^2}\\
= &-2a_1\left(\dfrac1\phi-\dfrac1x\right)^2-2(c_1y+\beta_1^2y^2)\left[\dfrac1\phi\left(\dfrac1\phi-\dfrac1x\right)-\left(\dfrac1\phi-\dfrac1x\right)^2\right]\\
&+\beta_1^2y^2\left(\dfrac1\phi-\left(\dfrac1\phi-\dfrac1x\right)\right)^2\\
\leq &-(2a_1-2c_1y-5\beta_1^2y^2)(\dfrac1\phi-\dfrac1x)^2+\beta_1^2y^2(\dfrac1\phi)^2-2c_1y\dfrac1\phi\left(\dfrac1\phi-\dfrac1x\right)\\
\leq&-(a_1-2c_1y-5\beta_1^2y^2)\left(\dfrac1\phi-\dfrac1x\right)^2+\big(\beta_1^2+\dfrac{c_1^2}{a_1}\big)y^2\dfrac1{\phi^2},
\end{aligned}
\end{equation}
and
\begin{equation}\label{e4.6}
g(\phi, x, y)=2\beta_1y\dfrac1x\left(\dfrac1\phi-\dfrac1x\right)=2\beta_1y\left(\dfrac1\phi-\dfrac1x\right)\left[\dfrac1\phi-\left(\dfrac1\phi-\dfrac1x\right)\right].
\end{equation}
Putting
$$
\begin{aligned}
\Omega_8^{z}=\left\{\int_0^{t}g\big(\varphi_{x }(s), X_{z }(s), Y_{z }(s)\big)dB_2(s)\leq \dfrac{\gamma_0^2}2+ \tilde m_2\int_0^{t}\big[g\big(\varphi_{x }(s), X_{z }(s), Y_{z }(s)\big)\big]^2ds\,\forall\,t\geq0\right\},
\end{aligned}
$$
where $\tilde m_2=\dfrac1{\gamma_0^2}\ln\dfrac1\eps.$
For
$\omega\in \Omega_8^{z}$,
\begin{equation}\label{e4.7}
\begin{aligned}
\left(\dfrac1{\varphi_{x }(t)}-\dfrac1{X_{z}(t)}\right)^2\leq\dfrac{\gamma^2_0}2+\int_0^{t}\Big(f(\varphi_{z }(s),X_{z }(s), Y_{z }(s))+\tilde m_2 g^2(\varphi_{z }(s),X_{z }(s), Y_{z }(s))\Big)ds.
\end{aligned}
\end{equation}
We deduce from \eqref{e4.5} and \eqref{e4.6} that
\begin{equation}\label{e4.8}
f(\phi, x, y)+\tilde m_2 g^2(\phi, x, y)\leq \tilde m_3y^2\dfrac1{\phi^2}\text{ if }\left|\dfrac1\phi-\dfrac1x\right|\leq 1 \text{ and }(8\tilde m_2+5)\beta_1^2y^2+2c_1y\leq a_1,
\end{equation}
where $\tilde m_3=8\tilde m_2\beta_1^2+\beta_1^2+\dfrac{c_1^2}{a_1}$.
In view of \eqref{e2.2}, there is a $T_2=T_2(\eps,H)>0$ such that $$\PP\left\{\dfrac1t\int_0^t\dfrac1{\varphi_{H^{-1}}^2(s)}ds\leq 2Q_{-2}\,\forall t\geq T_2\right\}\geq 1-\eps.$$
As a result, for all $x\in[H^{-1},H]$,
\begin{equation}\label{e4.11}
\PP\{\Omega_9^{z}\}\geq1-\eps \text{ where }\Omega_9^{z}=\Big\{\Upsilon_x(t):=\int_0^t\dfrac1{\varphi_{x}^2(s)}ds\leq 2Q_{-2}t\,\,\forall t\geq T_2\Big\}.
\end{equation}
Clearly, we can choose $T_3=T_3(\eps, H)\geq T_1\vee T_2$ such that
\begin{equation}\label{e4.9}
2\tilde m_1^2Q_{-2}\left(e^{-2\lambda t}t+2\lambda\int_{T_3}^t e^{-2\lambda s}sds\right)< \dfrac{\gamma_0^2}{4\tilde m_3}\,\forall t\geq T_3,
\end{equation}
and $\tilde\sigma=\tilde\sigma(\eps, H)<1$ sufficiently small such that
\begin{equation}\label{e4.10}
(8\tilde m_2+5)\beta_1^2\tilde\sigma^2+2c_1\tilde\sigma\leq a_1\text{ and } 2\tilde\sigma^2Q_{-2}T_3\leq \dfrac{\gamma_0^2}{4\tilde m_3}.
\end{equation}
In view of Lemmas \ref{lm3.2} and \ref{lm4.1},
we can find a $\tilde\delta=\tilde\delta(\eps, H)$ so small that
\begin{equation}\label{e4.12}
\ln \tilde\delta+\dfrac1{\eta_3}\ln\dfrac1\eps-\lambda T_3< \ln\tilde\sigma
\end{equation}
and
$$\PP(\Omega_{10}^{z})\geq 1-\eps\,\forall z\in [H^{-1},H]\times(0,\tilde\delta]\text{ where }\Omega_{10}^{z}=\{\zeta_{z}:=\vartheta_z\wedge\tau_z^{\tilde\sigma}\geq T_3\}.$$
It follows from \eqref{e4.7}, \eqref{e4.8}, and \eqref{e4.10} that when $\omega\in \Omega_8^{z}$ we have
\begin{equation}\label{e4.13}
\Big(\dfrac1{\varphi_{x }(t\wedge\zeta_{z})}-\dfrac1{X_{x }(t\wedge\zeta_{z})}\Big)^2\leq\dfrac{\gamma_0^2}2+\tilde m_3\int_0^{t\wedge\zeta_{z}}\dfrac{Y^2_{x }(s)}{\varphi^2_{x }(s)}ds\,\forall\, t\geq0.
\end{equation}
We have $\PP( \Omega_7^{z}), \PP( \Omega_8^{z})\geq1-\eps$ by the exponential martingale inequality. Hence $\PP(\tilde\Omega^z)\geq1-5\eps$ where
$\tilde\Omega^{z}=\cap_{i=6}^{10}\Omega^{z}_i$.
For $\omega\in \tilde\Omega^{z}$ and $t\geq T_3$, by integration by parts and using \eqref{e4.4}, \eqref{e4.9}, \eqref{e4.10}, and \eqref{e4.11}, we yield
\begin{equation}\label{e4.14}
\begin{aligned}
\int_0^{t\wedge\zeta_{z}}\dfrac{Y^2_{x }(s)}{\varphi^2_{x }(s)}ds=&\int_0^{T_3}\dfrac{Y^2_{x }(s)}{\varphi^2_{x }(s)}ds+\int_{T_3}^{t\wedge\zeta_{z}}\dfrac{Y^2_{x }(s)}{\varphi^2_{x }(s)}ds\\
\leq& \tilde\sigma^2\int_0^{T_3}\dfrac{1}{\varphi^2_{x }(s)}ds+\tilde m_1^2\int_{T_3}^{t\wedge\zeta_{z}}\exp(-2\lambda s)d\Upsilon_x(s)\\
\leq& 2\tilde\sigma^2Q_{-2}{T_3}+\tilde m_1^2\left[e^{-2(t\wedge\zeta_{z})}\Upsilon_x(t\wedge\zeta_{z})+2\lambda\int_{T_3}^{t\wedge\zeta_{z}}e^{-2\lambda s}\Upsilon_x(s)ds\right]
<\dfrac{\gamma_0^2}{2\tilde m_3}
\end{aligned}
\end{equation}
It follows from \eqref{e4.13} and \eqref{e4.14} that if $\omega\in \tilde\Omega^{z}$,
then $$\left(\dfrac1{\varphi_{x }(t\wedge\zeta_{z})}-\dfrac1{X_{x }(t\wedge\zeta_{z})}\right)^2<\gamma_0^2.$$
As a result, in $\tilde\Omega^{z}$, $t\wedge\zeta_{z}<\vartheta_{z}\,\forall t\geq T_3$,
which implies that $\{\zeta_{z}\leq\vartheta_z\}\supset \tilde\Omega^{z}$.
Since $\zeta_z=\vartheta_z\wedge\tau_z^{\tilde\sigma}$,
we obtain $\{\tau_{z}^{\tilde\sigma}\leq\vartheta_z\}\supset \tilde\Omega^{z}$.
When $z\in[H^{-1},H]\times(0,\tilde\delta]$ and $\omega\in  \tilde\Omega^{z}$,
it follows from \eqref{e4.3} and \eqref{e4.12} that
$$\ln Y_{z}(t\wedge\tau_{z}^{\tilde\sigma})\leq \ln y+\dfrac1{\eta_3}\ln\dfrac1\eps -\lambda(t\wedge\tau_{z}^{\tilde\sigma})<\ln\tilde\sigma\,\forall t\geq T_3.$$
It means that $t\wedge\tau_{z}^{\tilde\sigma}<\tau_{z}^{\tilde\sigma}\,\forall t\geq T_3$ for any $z\in[H^{-1},H]\times(0,\tilde\delta]$ and $\omega\in  \tilde\Omega^{z}$.
Equivalently, $\tau_{z}^{\tilde\sigma}=\vartheta_z=\infty$ for $\omega\in  \tilde\Omega^{z}$ and $z\in[H^{-1},H]\times(0,\tilde\delta]$.

As a result,  for $z\in[H^{-1},H]\times(0,\tilde\delta]$
$$\PP\left\{\limsup\limits_{t\to\infty}\dfrac{\ln Y_{z}(t)}t\leq-\lambda\text { and }\left|\dfrac1{\varphi_{x }(t)}-\dfrac1{X_{z }(t)}\right|\leq\gamma_0\leq\gamma\,\forall t\geq0\right\}\geq\PP(\tilde\Omega^z)\geq 1-5\eps.$$
\end{proof}

\begin{prop}\label{prop4.2}
For any $H>1, \eps>0,\rho>0$, there is a $\bar\delta>0$ such that for all $z\in [H^{-1},H]\times (0,\bar\delta)$, we have
$$\PP\left(\left\{\limsup\limits_{t\to\infty}\left|\dfrac{1}t\int_0^tX_{z }(s)ds-Q_1\right|\leq\rho\right\}\cap\left\{\limsup\limits_{t\to\infty}\left|
\dfrac{1}t\int_0^tX^2_{z}(s)ds-Q_2\right|\leq\rho\right\}\right)\geq 1-\eps.$$
\end{prop}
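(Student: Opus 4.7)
The plan is to combine the pathwise coupling $|1/\varphi_x(t)-1/X_z(t)|\le\gamma$ from Proposition~\ref{prop4.1} with the ergodicity \eqref{e2.2} of $\varphi_x$, transferring the limiting time-averages of $\varphi_x^p$ to those of $X_z^p$ via a truncation in $\varphi_x$. Implicitly we are in the regime $\lambda_1<0$, as the section title suggests. First, I would apply Proposition~\ref{prop4.1} with some $\lambda\in(0,-\lambda_1)$ and a small $\gamma>0$ (to be specified) to obtain $\tilde\delta_1>0$ and an event $\Omega_\star^z$ with $\PP(\Omega_\star^z)\ge 1-\eps/2$ on which $|1/\varphi_x(t)-1/X_z(t)|\le\gamma$ for all $t\ge0$, whenever $z\in[H^{-1},H]\times(0,\tilde\delta_1]$. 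By the ergodic relation \eqref{e2.2} applied to $h(\phi)=\phi,\,\phi^2,\,\phi^p\1_{\{\phi>K\}}$, almost surely $\frac{1}{t}\int_0^t\varphi_x^p(s)ds\to Q_p$ and $\frac{1}{t}\int_0^t\varphi_x^p(s)\1_{\{\varphi_x(s)>K\}}ds\to\int_K^\infty\phi^pf_1^*(\phi)d\phi$, the latter being as small as desired for large $K$ since $Q_1,Q_2<\infty$.

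Next, split $\frac{1}{t}\int_0^tX_z^p(s)ds=I_1(t)+I_2(t)$ with $I_1(t):=\frac{1}{t}\int_0^tX_z^p(s)\1_{\{\varphi_x(s)\le K\}}ds$ and $I_2(t)$ the complementary tail. Whenever $\varphi_x(s)\le K$ and $\gamma K\le 1/2$, the coupling forces $X_z(s)\le 2\varphi_x(s)$ and $|X_z^p(s)-\varphi_x^p(s)|\le C_p\gamma\varphi_x^{p+1}(s)\le C_p\gamma K\varphi_x^p(s)$, so $\limsup_{t\to\infty}|I_1(t)-\frac{1}{t}\int_0^t\varphi_x^p\1_{\{\varphi_x\le K\}}ds|\le C_p\gamma K Q_p$, which becomes $\le\rho/4$ by taking $\gamma K$ small. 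The real obstacle is controlling $I_2(t)$ almost surely, because on $\{\varphi_x>1/\gamma\}$ the coupling provides no upper bound on $X_z$ at all.

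To handle $I_2(t)$, I would establish on a further event $\Omega^\flat$ with $\PP(\Omega^\flat)\ge 1-\eps/2$ the almost-sure bound $\limsup_{t\to\infty}\frac{1}{t}\int_0^tX_z^{p'}(s)ds\le R$ for some $p'\in(2,5/2)$ and constant $R>0$. Taking $V(x,y)=(x+y)^q$ with $q\in(0,1/2)$, It\^o's formula yields $\mathcal{L}V\le C_1V-K_*V^{1+2/q}$ (the diffusion contributes a dominant negative $(x+y)^{q+2}$ term since $q(q-1)<0$), while the martingale part $M(t)$ has quadratic variation $[M](t)\le C\int_0^t(X_z+Y_z)^{2q+2}(s)ds$ with $2q+2<3$; Proposition~\ref{prop2.1}(iii) then gives $\E[M](t)\le Ct$ and hence $M(t)/t\to 0$ almost surely via Doob plus Borel-Cantelli along a dyadic sequence. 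Combining with the Young-type bound $V\le\eta V^{1+2/q}+C_\eta$ to absorb $C_1V$, dividing the resulting inequality $K'\int_0^tV^{1+2/q}ds\le V(Z(0))+Ct+M(t)$ by $t$, and taking $\limsup$, one obtains the claim with $p'=q+2$. Markov then gives $\frac{1}{t}\int_0^tX_z^p\1_{\{X_z>L\}}ds\le RL^{-(p'-p)}$, while $\frac{1}{t}\int_0^tL^p\1_{\{\varphi_x>K\}}ds\to L^p\pi^*_1(\{\phi>K\})$, making $I_2(t)$ small by taking $L$ then $K$ large. Choosing $K$ large, then $L$ large, then $\gamma$ small so that $\gamma K\le 1/2$, and setting $\bar\delta:=\tilde\delta_1$, the conclusion $\limsup_{t\to\infty}|\frac{1}{t}\int_0^tX_z^p(s)ds-Q_p|\le\rho$ for $p=1,2$ holds on $\Omega_\star^z\cap\Omega^\flat$, which has probability $\ge 1-\eps$. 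The main technical hurdle is the Lyapunov-plus-SLLN step that furnishes the almost-sure moment bound for $X_z^{p'}$ with $p'>2$, which is essential precisely because the coupling $|1/X_z-1/\varphi_x|\le\gamma$ degenerates on $\{\varphi_x>1/\gamma\}$.
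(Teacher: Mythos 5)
Your argument is correct, but the mechanism you use for the upper bound is genuinely different from the paper's. Both proofs get the lower bounds $\liminf_{t}\frac1t\int_0^tX_z^p(s)ds\ge Q_p-O(\rho)$, $p=1,2$, the same way: the coupling $|1/\varphi_x-1/X_z|\le\gamma$ supplied by Proposition~\ref{prop4.1} plus the ergodic theorem applied to a truncation of $\phi^p$. The divergence is in the matching upper bound. You control the tail $\frac1t\int_0^tX_z^p\1_{\{\varphi_x>K\}}ds$ by first establishing an almost-sure higher-moment bound $\limsup_t\frac1t\int_0^tX_z^{p'}(s)ds\le R$ with $p'=q+2\in(2,5/2)$ via the Lyapunov function $(x+y)^q$, $q\in(0,1/2)$; your computation $\mathcal{L}V\le C_1V-K_*V^{1+2/q}$ and the martingale SLLN step both check out, and $2q+2<3$ keeps the quadratic variation inside the range of Proposition~\ref{prop2.1}(iii). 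You then finish with Chebyshev in $L$ and ergodicity of $\1_{\{\varphi_x>K\}}$. The paper instead writes the It\^o formula for $\ln X_z(t)$ and $\ln\varphi_x(t)$, uses the coupling to get $\liminf_t\frac{\ln X_z(t)}{t}\ge0$ together with $Y_z(t)\to0$ on the good event, and deduces the single inequality $\limsup_t\frac1t\int_0^t\big(b_1X_z(s)+\frac{\alpha_1^2}{2}X_z^2(s)\big)ds\le a_1=b_1Q_1+\frac{\alpha_1^2}{2}Q_2$; combined with the two individual lower bounds this squeezes both averages simultaneously, with no moment above order $2$ ever needed. The paper's route is shorter and exploits the exact balance $b_1Q_1+\frac{\alpha_1^2}{2}Q_2=a_1$ inherited from the boundary equation; yours is heavier but more robust, since it does not depend on that algebraic identity and would survive perturbations of the drift. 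One bookkeeping slip: your final sentence fixes $K$ before $L$, but the term $L^p\pi_1^*((K,\infty))$ forces the order $L$ first, then $K$ (as you correctly state one sentence earlier), then $\gamma$.
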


\begin{proof}
Let $\bar\eta_1,\bar\eta_2,\bar\eta_3\in(0,1)$ be such that
$$\int_{\bar\eta_1}^{\bar\eta_2^{-1}}(\phi-\bar\eta_1) f^*_1(\phi)d\phi\geq Q_1-\dfrac\rho{1\vee b_1}
\text{ and }
\int_{\bar\eta_1}^{\bar\eta_2^{-1}}(\phi-\bar\eta_1)^2 f^*_1(\phi)d\phi\geq Q_2-\dfrac\rho{1\vee(\alpha_1^2/2)}.$$
In view of Proposition \ref{prop4.1}, there is a $\bar\delta>0$ such that for all $z\in [H^{-1},H]\times (0,\bar\delta)$,
$$\PP(\bar\Omega^z_1)>1-\eps\text{ where } \bar\Omega^z_1=\left\{\lim\limits_{t\to\infty}Y_z(t)=0\right\}\cap\left\{\left|\dfrac1{\varphi_{x }(t)}-\dfrac1{X_{z }(t)}\right|\leq\bar\eta_1\bar\eta_2^2\,\forall t\geq0\right\}.$$
Similar to \eqref{e4.1}, we have for $\omega\in\bar\Omega^z_1$ that
\begin{equation}\label{e4.15c}
\liminf\limits_{t\to\infty}\dfrac1t\int_0^tX_{z }(s)ds\geq  Q_1-\dfrac\rho{1\vee b_1}
\end{equation}
and
\begin{equation}\label{e4.15a}
\liminf\limits_{t\to\infty}\dfrac1t\int_0^tX_{z }^2(s)ds\geq  Q_2-\dfrac\rho{1\vee(\alpha_1^2/2)}.
\end{equation}
On the other hand, we have from It\^o's formula that
$$
\begin{aligned}
\dfrac{\ln X_z(t)}t=&\dfrac{\ln x}t+a_1-\dfrac1t\int_0^t\left(b_1 X_z(s)+\dfrac{\alpha_1^2}2 X_z^2(s)+\dfrac{\beta^2_1}2Y^2_z(s)\right)ds+c_1\int_0^tY_z(s)ds\\
&+\dfrac1t\int_0^t\big(\alpha_1 X_z(s)dB_1(s)+\beta_1Y_z(s)dB_2(s)\big)
,\end{aligned}
$$
and
$$
\begin{aligned}
\dfrac{\ln \varphi_{x}(t)}t=\dfrac{\ln x}t+a_1-\dfrac1t\int_0^t\left(b_1 \varphi_{x}(s)+\dfrac{\alpha_1^2}2 \varphi_{x}^2(s)\right)ds+\dfrac1t\int_0^t\alpha_1 \varphi_{x}(s)dB_1(s).
\end{aligned}
$$
Using the ergodicity of $\varphi_x(t)$
and the strong law of large numbers for martingales we have
$$\lim\limits_{t\to\infty}\left[\dfrac1t\int_0^t\left(b_1 \varphi_{x}(s)+\dfrac{\alpha_1^2}2 \varphi_{x}^2(s)\right)ds+\dfrac1t\int_0^t\alpha_1 \varphi_{x}(s)dB_1(s)\right]=b_1Q_1+\dfrac{\alpha^2_1}2Q_2 \quad\text{a.s.}$$
By direct calculation, $b_1Q_1+\dfrac{\alpha^2_1}2Q_2=a_1$, which implies that
 $$\lim\limits_{t\to\infty}\dfrac{\ln \varphi_{x}(t)}t=0\quad\text{a.s.}$$
 Note that, if $\omega\in \bar\Omega^z_1$,  we have $\left|\dfrac1{\varphi_{x }(t)}-\dfrac1{X_{z }(t)}\right|\leq\bar\eta_1\bar\eta_2^2$.
 Hence $$\ln X_{z}(t)=-\ln\dfrac1{X_{z }(t)}\geq-\left|\ln \dfrac1{\varphi_{x}(t)}\right|-|\ln (\eta_1\bar\eta_2^2)|.$$
 As a result,
 $$\liminf\limits_{t\to\infty}\dfrac{\ln X_{z}(t)}t\geq0\quad\text{ for almost } \omega\in \bar\Omega^z_1.$$
 Using this estimate and arguments similar the proof of \cite[Theorem 2.2]{DDT} as well as the convergence of $Y_z(t)$ to $0$ in $\bar\Omega^z_1$, we can show that
\begin{equation}\label{e4.15b}
\limsup\limits_{t\to\infty}\dfrac1t\int_0^t\big(b_1 X_z(s)+\dfrac{\alpha_1^2}2 X_z^2(s)\big)ds\leq a_1=b_1Q_1+\dfrac{\alpha^2_1}2Q_2	\text{ for almost } \omega\in\bar\Omega^z_1.
\end{equation}
The claim of the proposition is derived from \eqref{e4.15c}, \eqref{e4.15a}, and \eqref{e4.15b}.
\end{proof}

We are now in a position to prove Theorems \ref{thm2.2} and \ref{thm2.3}.

\begin{proof}[Proof of Theorem \ref{thm2.2}]
Suppose $\lambda_1<0$ and $\lambda_2>0$.
Consider
any $\eps, \gamma>0$ and $\lambda\in(0,-\lambda_1)$.
In view of Proposition \ref{prop2.1}, there is an $H>1$ such that
\begin{equation}\label{e4.16}
\limsup\limits_{t\to\infty}\PP\{(Y_{z}(t),X_{z}(t)\big)\in C\}\geq 1-\eps \text{
where } C:=\{H^{-1}\leq x\vee y\leq H\}.\end{equation}
By virtue of Proposition \ref{prop4.1}, there is $\tilde\delta_1>0$ such that
\begin{equation}\label{e4.18}
\PP\left\{\limsup\limits_{t\to\infty}\dfrac{\ln Y_{z}(t)}t\leq-\lambda\text { and }\left|\dfrac1{\varphi_{x }(t)}-\dfrac1{X_{z }(t)}\right|\leq\gamma\,\forall t\geq0
\right\}\geq 1-\eps\, \forall\, z\in C_1
,\end{equation}
where
$C_1:=[H^{-1},H]\times(0,\tilde\delta_1).$
Since $\lambda_2>0$, similar to Proposition \ref{prop3.2}, there is $T_4>1,\tilde\delta_2>0$ such that
\begin{equation}\label{e4.15}
\limsup\limits_{n\to\infty}\dfrac1{n}\sum_{i=0}^{n-1}\PP\{X_{z_0}(T_4)<\delta_2\}\leq\eps.
\end{equation}
\eqref{e4.18}
indicates that $Z(t)$ is not recurrent in $\R^{2,\circ}_+$.
Since the diffusion is non-degenerate, $Z(t)$ must be transient.
Note that $C_2:=C\setminus(C_1\cup \{(x,y): x<\tilde\delta_2\})$ is a compact subset of $\R^{2,\circ}_+.$
By the transience of $Z(t)$,
\begin{equation}\label{e4.17}
\lim\limits_{t\to\infty}\PP\{(Z_{z_0}(t)\in C_2\}=0.
\end{equation}
It follows from \eqref{e4.16}, \eqref{e4.15}, and \eqref{e4.17} that
$$\limsup\limits_{n\to\infty}\sum_{i=0}^{n-1}\PP\{(Z_{z_0}(iT_4)\in C_1\}\geq 1-2\eps.$$
It means that, there is $i_0$ such that $\PP\{(Z_{z_0}(i_0T_4)\in C_1\}\geq 1-3\eps$.
By the Markov property, we deduce from this and \eqref{e4.18} that
$$\PP\left\{\limsup\limits_{t\to\infty}\dfrac{\ln Y_{z_0}(t)}t\leq-\lambda\right\}\geq (1-\eps)(1-3\eps)\geq 1-4\eps.$$
It holds for any $\eps>0$ and $\lambda\in(0,-\lambda_1)$, so we claim that
\begin{equation}\label{e4.30}
\PP\left\{\limsup\limits_{t\to\infty}\dfrac{\ln Y_{z_0}(t)}t\leq\lambda_1\right\}=1.
\end{equation}
Likewise, using Proposition \ref{prop4.2} and the arguments above, we can show that
\begin{equation}\label{e4.18a}
\PP\left\{\lim\limits_{t\to\infty}\dfrac{1}t\int_0^tX_{z_0}(s)ds=Q_1\text{ and }\lim\limits_{t\to\infty}\dfrac{1}t\int_0^tX^2_{z_0}(s)ds=Q_2\right\}=1.
\end{equation}
Employing the strong law of large numbers for martingales,
\begin{equation}\label{e4.18b}
\PP\left\{\lim\limits_{t\to\infty}\dfrac{1}t\int_0^tX_{z_0}(s)dB_2(s)=0\text{ and }\lim\limits_{t\to\infty}\dfrac{1}t\int_0^tY_{z_0}(s)dB_3(s)=0\right\}=1.
\end{equation}
Applying \eqref{e4.30}, \eqref{e4.18a} and \eqref{e4.18b} to \eqref{e2.5}
leads to
$$\PP\left\{\lim\limits_{t\to\infty}\dfrac{\ln Y_{z_0}(t)}t=\lambda_1\right\}=1.$$
To prove the remaining part,
it suffices to show that the distribution of $X_{z_0}^{-1}(t)$
converges weakly to the measure $\check\pi_1$ on $(0,\infty)$ with $\check\pi_1(dx)=\frac{1}{x^2}f^*_1\left(\frac1x\right)$.
In light of Portmanteau's theorem, let $h(\cdot)$ be a Lipschitz function in $(0,\infty),$
we need to show that
$$\lim\limits_{t\to\infty}\E h(X_{z_0}^{-1}(t))= h^*:=\int_0^\infty \dfrac{h(\phi)}{\phi^2}f^*_1(\dfrac1\phi)d\phi\,\forall\,z_0\in\R_+^{2,\circ}.$$

Let $K_h>0$ be such that $|h(x_1)|\leq K_h$ and $|h(x_1)-h(x_2)|\leq K_h|x_1-x_2|$ for all $x_1,x_2\in(0,\infty)$.
We have the following estimate.
\begin{equation}\label{e4.19}
\begin{aligned}
\Big|\E h(X_z^{-1}(t))- h^*\Big|\leq&\Big|\E h(\varphi_x^{-1}(t))- h^*\Big|+K_h\gamma\PP\{|\varphi_x^{-1}(t)-X_z^{-1}(t)|\leq\gamma\}\\
&+ 2K_h\PP\{|\varphi_x^{-1}(t)-X_z^{-1}(t)|\geq\gamma\}.
\end{aligned}
\end{equation}
It follows from \eqref{e4.19} and the weak convergence of the distribution of $\varphi_x^{-1}(t)$ to $\check\pi_1$ (since the distribution of $\varphi_x(t)$ converges weakly to $\pi^*_1$) that
\begin{equation}\label{e4.20}
\begin{aligned}
\limsup\limits_{t\to\infty}\Big|\E h(X_z^{-1}(t))- h^*\Big|\leq& K_h\gamma\limsup\limits_{t\to\infty}\PP\{|\varphi_x^{-1}(t)-X_z^{-1}(t)|\leq\gamma\}\\
&+2K_h\limsup\limits_{t\to\infty}\PP\{|\varphi_x^{-1}(t)-X_z^{-1}(t)|\geq\gamma\}.
\end{aligned}
\end{equation}
By the Markov property,
\begin{equation}\label{e4.22}
\begin{aligned}
\Big|\E h(X_{z_0}^{-1}(t+i_0T_4))- h^*\Big|\leq& \int_{\R^{2,\circ}_+}\Big|\E h(X_{z}^{-1}(t))- h^*\Big|\PP\{X_{z_0}(i_0T_4)\in dz\}\\
\leq&\int_{C_1}\Big|\E h(X_{z}^{-1}(t))- h^*\Big|\PP\{X_{z_0}(i_0T_4)\in dz\}+ 2K_h\PP\{X_{z_0}(i_0T_4)\notin C_1\}.
\end{aligned}
\end{equation}
Using \eqref{e4.18} and \eqref{e4.20}, and applying Fatou's lemma to \eqref{e4.22}, we obtain
$$\limsup\limits_{t\to\infty}\left|\E h(X_z^{-1}(t+i_0T_4))- h^*\right|\leq (K_h\gamma+K_h\eps)+ 6K_h\eps.$$
It holds for any $\eps,\gamma>0$, we obtain the convergence of $\E h(X_{z_0})(t)$ to $h^*$.
The proof is complete.
\end{proof}

\begin{proof}[
Proof of Theorem \ref{thm2.3}]
For any $\eps>0$. Let $H>1$ such that
$$
\limsup\PP\{(Y_{z}(t),X_{z}(t)\big)\in C\}\geq 1-\eps \text{
where } C:=\{H^{-1}\leq x\vee y\leq H).$$
Since $\lambda_1,\lambda_2<0$,
Let $\lambda_1'\in(0,-\lambda_2)$ and $\lambda_2'\in(0,-\lambda_2)$
there is $\tilde\delta_3>0$ such that
$$\PP\left\{\lim\limits_{t\to\infty}Y_z(t) =0\right\}\geq 1-\eps\,\forall z\in C_3:=[H^{-1},H]\times(0,\tilde\delta_3)$$
and
$$\PP\left\{\lim\limits_{t\to\infty}X_{z}(t)=0\right\}\geq 1-\eps\,\forall z\in C_4:=(0,\tilde\delta_3)\times[H^{-1},H].$$

Since the diffusion is non-degenerate, for $t>0$, $\PP\{Z_{z_0}(t)\in C_3\}$
and $\PP\{Z_{z_0}(t)\in C_4\}$ are both positive.
By the Markov property, $p_{z_0}:=\PP\{\lim\limits_{t\to\infty}X_{z_0}(t)=0\}>0$ and
 $q_{z_0}:=\PP\{\lim\limits_{t\to\infty}Y_{z_0}(t)=0\}>0.$
We now show that $p_{z_0}+ q_{z_0}=1$.
Since $\limsup\limits_{t\to\infty}\PP\{Z_{z_0}(t)\in C\setminus(C_3\cup C_4)=0$,
similar to the proof of Theorem \ref{thm2.3}, there is a $T_{z_0}(\eps)>0$ such that
$$\PP\{Z_{z_0}(T_{z_0}(\eps))\in C_3\cup C_4\}\geq 1-3\eps.$$
As a consequence of the Markov property,
$$\PP\left\{\lim\limits_{t\to\infty}X_{z_0}(t)=0 \text{ or } \lim\limits_{t\to\infty}Y_{z_0}(t)=0\right\}\geq 1-4\eps.$$
Since $\eps$ is taken arbitrarily, we claim  $p_{z_0}+ q_{z_0}=1$.
Analogous to Theorem \ref{thm2.2}, we can show that
$$\PP\left\{\lim\limits_{t\to\infty}\dfrac{\ln Y_{z_0}(t)}t=\lambda_1\right\}=q_{z_0}
\
\hbox{ and }\
\PP\left\{\lim\limits_{t\to\infty}\dfrac{\ln X_{z_0}(t)}t=\lambda_1\right\}=p_{z_0}.$$
The remaining assertion can
be proved by arguments similar to
that of Theorem \ref{thm2.2}.
\end{proof}

\section{A Piecewise Deterministic Model of Competitive Type}\label{sec:pie}
In \cite{DDY1} and \cite{DD}, we considered a Kolmogorov system of competitive type under telegraph noise given by
\begin{equation}\label{e5.1}
\left\{\begin{array}{ccc}\dot x(t)&=&x(t) a({\xi(t)}, x(t),
y(t))\\
\dot y(t) &=&y(t)b({\xi(t)}, x(t), y(t)),\end{array}\right.
\end{equation}
where $\{\xi(t): {t\geq 0} \}$ be an
$\F_t$-adapted continuous-time Markov chain whose state space is
a two-element set $\M  = \{1,2\}$ and
$a_i(x, y)$ and $b_i(x, y)$ are real-valued functions defined for $ i\in \M$ and $(x,y) \in \R^2_+$,
and are continuously differentiable in $(x,y)\in
\R_+^2=\{(x,y):x\geq 0,y\geq 0\}$.
We also assume that the generator of $\xi(t)$ is given by
$Q =\left( \begin{array}{ll}  -\alpha & \alpha \\
 \beta  & -\beta \\ \end{array} \right)$
 with $\alpha > 0$ and  $\beta >0$.
Note that in the above and henceforth,
we write
$a_i(x,y)$ instead of $a(i,x,y)$ to distinguish the discrete state $i$ with the
continuous state $(x,y)$.
Due to the  telegraph noise $\xi(t)$,
the system  switches randomly
between two deterministic Kolmogorov systems
\begin{equation}
\label{e5.2}
\left\{
\begin{array}{ccc}
\dot x(t)&=&x (t) a_1( x(t), y(t))\\
\dot y(t) &=&y(t) b_1( x(t), y(t)),
\end{array}\right.
\end{equation}
\begin{equation}\label{e5.3}
\left\{\begin{array}{ccc}\dot x(t) &=&x(t) a_2( x(t), y(t))\\
\dot y(t) &=&y
(t) b_2(
x(t), y(t)).\end{array}\right.
\end{equation}
The two following assumption are imposed throughout this section.
\begin{asp}\label{asp5.1} {\rm For each $i\in \M$,
$a_i( x, y)$ and $b_i(x, y)$ are continuously differentiable in $(x,
y)\in\R^2_+.$ Moreover,
\begin{enumerate}
\item
 $\dfrac{\partial a_i(x, 0)}{\partial x}<0\,\forall\,x>0$ and $i\in \M$;\quad $\dfrac{\partial b_i( 0, y)}{\partial y}<0\,\forall\,y>0$ and $i\in \M$.

\item
 $a_i( 0, 0)>0, \limsup\limits_{x\to\infty}a_i( x, 0)<0$;\quad $b_i(0, 0)>0, \limsup\limits_{y\to\infty}b_i(0, y)<0$.
\end{enumerate}
}\end{asp}

\begin{asp}\label{asp5.2} {\rm
Every solution starting in $\R^2_+\setminus\{(0,0)\}$ will eventually enter an invariant set  $D\subset[0,H_0]^2\setminus [0, H_0^{-1}]^2$ where $H_0>1$ satisfying
$a_i(x, 0), b_i(0, y)>0$ if $x, y<H_0^{-1}$ and $a_i(x, 0), b_i(0, y)<0$ if $x, y>H_0$.
}\end{asp}

Consider two equations on the boundary
\begin{align}\label{e5.5}
\dot u(t)=u(t)a({\xi(t)}, u(t),0),\quad u(0)\in (0,\infty)\\
\label{e5.6}
\dot v(t)=v(t)b({\xi(t)}, 0,v(t)),\quad v(0)\in (0,\infty).
\end{align}

It is known that under Assumptions \ref{asp5.1} and \ref{asp5.2}, the Markov processes $(\xi(t), u(t))$ and  $(\xi(t), v(t))$ have unique invariant probability measures $\mu(\cdot)$ and $\nu(\cdot)$ respectively. We refer to \cite{DD} for the expression of the density functions of $\mu(\cdot)$ and $\nu(\cdot)$.
Like \eqref{e2.3} and \eqref{e2.4}, we define two values.
\begin{equation}\label{e5.7}
\bar\lambda_1=\sum_{i=1}^2\int\limits_{\R_+} b_i(u, 0)\mu(\{i\}\times du),\qquad
\bar\lambda_2=\sum_{i=1}^2\int\limits_{\R_+} a_i(0, v)\nu(\{i\}\times dv).
\end{equation}
In \cite{DDY1}, we showed that if $\bar\lambda_1$ and $\bar\lambda_2$ are positive, the process $(\xi(t), x(t), y(t))$ has an invariant probability measure in $\R^{2,\circ}$ that is unique and has some nice properties under additional assumptions. The goal of this section is to provide some results for \eqref{e5.1} when $\lambda_1$ and/or $\lambda_2$ are negative.
Let $z_{i_0, z_0}(t)=(x_{i_0, z_0}(t), y_{i_0, z_0}(t))$ be the solution to \eqref{e5.1} with initial value $\xi(0)=i_0, z_{i_0, z_0}(0)=z_0=(x_0,y_0)$. Denote by
$u_{i_0, x_0}(t)$ and $v_{i_0, y_0}(t)$ the solutions to \eqref{e5.2} and \eqref{e5.3} with initial value
$\xi(0)=i_0, u_{i_0, x_0}(0)=x_0$ and $\xi(0)=i_0, v_{i_0, y_0}(0)=y_0$ respectively.
In view of Assumption \ref{asp5.2}, we assume in the sequel that $z_{i_0, z_0}(t), u_{i_0,x_0}(t), v_{i_0, y_0}(t)\in[0,H_0]^2\setminus[0,H_0^{-1}]^2\,\forall t\geq0.$
\begin{prop}\label{prop5.1}
If $\bar\lambda_1<0$,
for any $\eps, \gamma>0, \lambda\in (0,-\bar\lambda_1)$, there is a $\bar\delta>0$ such that for all $ (i_0, z_0)\in\M\times\Big(D\cap[H_0^{-1},H_0]\times(0,\bar\delta)\Big),$
$$\PP\left(\left\{\limsup\limits_{t\to\infty}\dfrac{\ln y_{i_0, z_0}(t)}t\leq-\lambda\right\}\cap\bigg\{\left|u_{i_0, x_0}(t)-x_{i_0, z_0}(t)\right|\leq\gamma\,\forall t\geq0\bigg\}\right)\geq 1-\eps.$$
\end{prop}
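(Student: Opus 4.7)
The proof parallels that of Proposition \ref{prop4.1}. Since randomness here enters only through the Markov chain $\xi(t)$ and the dynamics on the compact invariant set $D\subset[0,H_0]^2\setminus[0,H_0^{-1}]^2$ are ordinary differential equations, the It\^o calculus and exponential martingale inequality used in Proposition \ref{prop4.1} are replaced by the ordinary chain rule and uniform Lipschitz bounds: Assumption \ref{asp5.2} confines the solutions to $D$, and Assumption \ref{asp5.1} together with continuous differentiability gives uniform Lipschitz continuity of $a_i,b_i$ on $D$ with some constant $K$.

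The first step is to invoke the ergodic theorem for the Feller process $(\xi(t), u_{i_0,x_0}(t))$ with invariant measure $\mu$. Given $\lambda\in(0,-\bar\lambda_1)$, set $d=(-\bar\lambda_1-\lambda)/4$ and choose cutoffs $\eta_1,\eta_2,\eta_3\in(0,1)$ and a time $T_1=T_1(\eps)$ so that with probability at least $1-\eps$ the truncated time average of $b(\xi(s), u(s)-\eta_1, 0)$ on $\{\eta_1\le u(s)\le\eta_2^{-1}\}$ stays below $-\lambda-2d$ for all $t\ge T_1$; this is the direct analogue of $\Omega_6^z$. Next, introduce the stopping times
$$
\tau^{\tilde\sigma}=\inf\{t: y_{i_0,z_0}(t)\ge\tilde\sigma\},\qquad \vartheta=\inf\Big\{t: \Big|\tfrac{1}{u_{i_0,x_0}(t)}-\tfrac{1}{x_{i_0,z_0}(t)}\Big|\ge \gamma_0:=\gamma\wedge(\eta_1\eta_2^2)\Big\}.
$$
Using the Lipschitz bound $|b(\xi,x,y)-b(\xi,u,0)|\le K(|x-u|+y)$ and integrating $\ln y_{i_0,z_0}(t) = \ln y_0 + \int_0^t b(\xi(s), x(s), y(s))\,ds$ on $[T_1,\tau^{\tilde\sigma}\wedge\vartheta]$, one deduces $y_{i_0,z_0}(t)\le \tilde m_1 e^{-\lambda t}$ on this interval, in direct analogy with \eqref{e4.3}--\eqref{e4.4}.

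The central step is the infinite-time bound on $|1/u-1/x|$. Using Assumption \ref{asp5.1}(1), write $a(\xi,u,0)=a(\xi,0,0)-\tilde R(\xi,u)$ with $\tilde R(\xi,\cdot)\ge 0$ and increasing; a direct computation then gives
$$
\frac{d}{dt}\left(\frac{1}{u}-\frac{1}{x}\right)= -a(\xi,0,0)\left(\frac{1}{u}-\frac{1}{x}\right)+\Big[\frac{\tilde R(\xi,u)}{u}-\frac{\tilde R(\xi,x)}{x}\Big]+\frac{a(\xi,x,0)-a(\xi,x,y)}{x}.
$$
Crucially, $a(\xi,0,0)\ge a_{\min}:=\min_{i\in\M} a_i(0,0)>0$ by Assumption \ref{asp5.1}(2), providing the linear contraction that played the role of $-a_1$ in \eqref{e4.5}. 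Multiplying by $2(1/u-1/x)$, bounding the remaining terms by $C(|x-u|+y)=C(ux|1/u-1/x|+y)$ on $D$, and arguing as in \eqref{e4.5}--\eqref{e4.14} produces an inequality of the form
$$
\left(\frac{1}{u(t)}-\frac{1}{x(t)}\right)^{\!2}\le C_2\int_0^{t\wedge\tau^{\tilde\sigma}\wedge\vartheta}\frac{y^2(s)}{u^2(s)}\,ds,
$$
which together with $y(s)\le \tilde m_1 e^{-\lambda s}$ and the lower bound $u(s)\ge H_0^{-1}$ is finite and bounded uniformly in $t$.

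Finally, shrinking $\bar\delta$ so that the right-hand side above stays below $\gamma_0^2/2$ and so that $\ln\bar\delta+C_1-\lambda T\le\ln\tilde\sigma$ for an appropriately chosen $T$, a stopping-time argument identical to the one concluding Proposition \ref{prop4.1} forces $\tau^{\tilde\sigma}=\vartheta=\infty$ on the good event, which simultaneously establishes both of the claimed properties for all $t\ge 0$. The main obstacle is the central step: with no It\^o correction available, the contraction of $1/u-1/x$ comes exclusively from the linear coefficient $-a(\xi,0,0)$, so Assumption \ref{asp5.1}(2) (uniform positivity of $a_i(0,0)$) is essential. One must estimate the nonlinear perturbation $\tilde R(\xi,u)/u-\tilde R(\xi,x)/x$ carefully, showing that its contribution, multiplied by $(1/u-1/x)$, cannot overcome the deterministic contraction rate $2a_{\min}$ on $D$, so that the overall coefficient of $(1/u-1/x)^2$ in the differential inequality remains strictly negative.
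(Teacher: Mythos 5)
Your overall architecture (ergodic average of $b$ along $u$, stopping times $\tau^{\tilde\sigma}$ and $\vartheta$, exponential decay of $y$, an infinite-horizon Gronwall-type bound on the distance between the interior solution and the boundary solution, then the bootstrapping that forces $\tau^{\tilde\sigma}=\vartheta=\infty$) matches the paper's. But the central step fails as you have set it up. You compare $1/u-1/x$ and try to extract contraction solely from the constant $a(\xi,0,0)\ge a_{\min}>0$, treating all of the $x$-dependence of $a$ as a perturbation $\tilde R(\xi,u)/u-\tilde R(\xi,x)/x$. In the Lotka--Volterra case of Proposition \ref{prop4.1} this perturbation vanishes identically because $\tilde R(x)/x=b_1$ is constant -- that is the only reason the $1/x$ substitution works there. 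For a general Kolmogorov system it does not vanish: writing $F(w)=-a(\xi,w,0)/w$, one has $\frac{d}{dt}(1/u-1/x)=F(u)-F(x)+[a(\xi,x,y)-a(\xi,x,0)]/x$ and $F'(w)=-\partial_x a(\xi,w,0)/w+a(\xi,w,0)/w^2$; the second term is negative wherever $a(\xi,\cdot,0)<0$ (which happens on part of $[H_0^{-1},H_0]$ by Assumption \ref{asp5.2}), and it can dominate the first, making $F$ locally decreasing. Then $(F(u)-F(x))(1/u-1/x)>0$ and the coefficient of $(1/u-1/x)^2$ in your differential inequality is \emph{positive}, so the assertion you flag as "the main obstacle" -- that the perturbation "cannot overcome" $2a_{\min}$ -- is simply false in general. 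Assumption \ref{asp5.1}(2) is not the right source of contraction here.

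The paper avoids this precisely by \emph{not} comparing reciprocals: it compares $\ln u_{i_0,x_0}(t)$ with $\ln x_{i_0,z_0}(t)$. Since $\frac{d}{dt}(\ln u-\ln x)=a(\xi,u,0)-a(\xi,x,y)$, the mean value theorem together with Assumption \ref{asp5.1}(1) (namely $\partial_x a_i(x,0)\le-\kappa<0$ uniformly on the relevant compact interval) gives $(\ln u-\ln x)\bigl(a(\xi,u,0)-a(\xi,x,0)\bigr)\le-\kappa(\ln u-\ln x)(u-x)\le-(\kappa/H_0)(\ln u-\ln x)^2$, with the only residual perturbation being the $y$-term, bounded by $\bar K\,y\,|\ln u-\ln x|$; this yields \eqref{e5.8} and the rest of your argument then goes through verbatim with $|\ln u-\ln x|$ in place of $|1/u-1/x|$ (the two are comparable on $D$ by \eqref{e5.9}, so the conclusion in terms of $|u-x|\le\gamma$ follows). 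To repair your write-up, replace the reciprocal comparison and the $a_{\min}$ contraction by the logarithmic comparison and the uniform monotonicity constant $\kappa$ from Assumption \ref{asp5.1}(1).
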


\begin{proof}
Since $(\xi(t), u(t))$ is an ergodic Markov process,
the result  can be proven in the same manner as in Proposition \ref{prop4.1}.
It should be noted that it is even
simpler to have such results for \eqref{e5.1} than for \eqref{e1.2} because of two reasons.
First, some estimates for \eqref{e5.1} can be done with probability 1 in view of the nature of a piecewise deterministic process. Second, under Assumption \ref{asp5.2},  the solution of \eqref{e5.1}  evolves only in a compact domain.
The only difference that should be pointed out is that we do not compare $u_{i_0,x_0}^{-1}(t)$ and
$x_{i_0, z_0}^{-1}(t)$ like Proposition \ref{prop4.1}. Instead, we compare $\ln (u_{i_0,x_0}(t))$ and $\ln (x_{i_0, z_0}(t))$.
Since  $z_{i_0, z_0}(t)\in [0,H_0]^2\setminus[0,H_0^{-1}]^2\,\forall\,t\geq0,$
if $y_{i_0, z_0}(t)<H_0^{-1}$ then $x_{i_0, z_0}(t)\in[H_0^{-1},H_0]$ and
\begin{equation}\label{e5.9}
H_0^{-1}|u_{x_0}(t)-
x_{i_0, z_0}(t)|\leq \big|\ln u_{i_0,x_0}(t)-\ln x_{i_0, z_0}\big|\leq H_0|u_{i_0,x_0}(t)-
x_{i_0, z_0}(t)|.
\end{equation}
From (1) of Assumption \ref{asp5.1}, there is a  $\kappa>0$  such that $\dfrac{\partial a_i(x,
0)}{\partial x}\leq -\kappa\,\forall\,\delta\leq x\leq H_0$.
Let $\bar K=\sup\big\{\big|\dfrac{\partial a_i(x,y)}{\partial y}\big|: i\in\M, (x,y)\in[0,H_0]^2\big\}.$
It is clear from the mean value theorem that
\begin{equation}\label{e5.8}
\begin{aligned}
\dfrac{d}{dt}&\big(\ln u_{i_0,x_0}(t)-\ln x_{i_0, z_0}(t)\big)^2\\
&=\big(\ln u_{i_0,x_0}(t)-\ln x_{i_0, z_0}(t)\big)\Big[a({\xi(t)}, u_{i_0,x_0}(t), 0)-
a({\xi(t)}, x_{i_0, z_0}(t), y_{i_0, z_0}(t))\Big]\\
&\leq-\kappa\big(\ln u_{i_0,x_0}(t)-\ln x_{i_0, z_0}(t)\big)\big(u_{i_0,x_0}(t)-x_{i_0, z_0}(t)\big)+\bar K y_{i_0, z_0}(t)\big|\ln u_{i_0,x_0}(t)-\ln x_{i_0, z_0}(t)\big|\\
&\leq -\dfrac{\kappa}{H_0}\big(\ln u_{i_0,x_0}(t)-\ln x_{i_0, z_0}(t)\big)^2+\bar K y_{i_0, z_0}(t)\big|\ln u_{i_0,x_0}(t)-\ln x_{i_0, z_0}(t)\big|\\
&\leq -\kappa_1\big(\ln u_{i_0,x_0}(t)-\ln x_{i_0, z_0}(t)\big)^2+\kappa_2 y^2_{i_0, z_0}(t)
\end{aligned}
\end{equation}
where $\kappa_1, \kappa_2$ are some positive constants.
From \eqref{e5.9} and \eqref{e5.8}, we can easily proceed like Proposition \ref{prop4.1} to obtain the desired result.
\end{proof}
We denote by $\pi^1_t(u,v)=(x_1(t,u,v),y_1(t,u,v)),$ (resp. $\pi^2_t(
u,v)=(x_2(t,u,v),y_2(t,u,v))$ the solution of Equation
$\eqref{e5.2}$ (resp. \eqref{e5.3}) with initial value $(u,v)$.
Because of the degeneracy of \eqref{e5.1}, we cannot obtain the counterparts of Theorems \ref{thm2.2} and \ref{thm2.3} for \eqref{e5.1} in general.
However,  such results can be achieved in some cases.

\subsection{Case Study 1}
We consider the case when one of the two systems \eqref{e5.2} and \eqref{e5.3} has a globally asymptotically
stable equilibrium that is  positive.

\begin{thm}\label{thm5.1}
Let Assumptions \ref{asp5.1} and \ref{asp5.2} be satisfied.
Assume that system \eqref{e5.2} has a globally
stable positive equilibrium $(x^*_1, y^*_1)$.
Let
\begin{equation}\label{e5.10}
S=\left\{(x, y)=\pi_{t_n}^{\varrho(n)}\circ\cdots\circ\pi_{t_1}^{\varrho(1)}(x^*_1, y^*_1):0<t_1,t_2,... ,t_n; \; n\in \N\right\},
\end{equation}
where $\varrho(k)=1$ if $k$ is even, otherwise $\varrho(k)=2$.
Let $\bar S$ be the closure of $S$ in $\R^2_+=\{(x,y): x\geq0, y\geq 0\}$.
Then we have
\begin{enumerate}
\item If $\bar\lambda_1<0$, $\bar\lambda_2>0$ and $\bar S\cap \{(x,y): y=0\}\ne\emptyset$
then $\PP\left\{\lim\limits_{t\to\infty}\dfrac{\ln y(t)}t=\bar\lambda_1\right\}=1$.
\item If $\bar\lambda_1<0$, $\bar\lambda_2<0$ and $\bar S\cap \{(x,y): y=0\}\ne\emptyset$, $\bar S\cap\{(x,y): x=0\}\ne\emptyset$
then $p_{i_0,z_0}, q_{i_0,z_0}>0$ and $p_{i_0,z_0}+q_{i_0,z_0}=1$
where $p_{i_0,z_0}=\PP\left\{\lim\limits_{t\to\infty}\dfrac{\ln y_{i_0,z_0}(t)}t=\bar\lambda_1\right\}$ and
$q_{i_0,z_0}=\PP\left\{\lim\limits_{t\to\infty}\dfrac{\ln x_{i_0,z_0}(t)}t=\bar\lambda_2\right\}$.
\end{enumerate}
\end{thm}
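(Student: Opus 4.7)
My plan is to adapt the argument used for Theorems \ref{thm2.2} and \ref{thm2.3} to the PDMP setting, replacing the non-degeneracy of the diffusion used there by the explicit accessibility condition $\bar S \cap \{y=0\} \neq \emptyset$ (and, for Part 2, also $\bar S \cap \{x=0\} \neq \emptyset$). Proposition \ref{prop5.1} will play the role of Proposition \ref{prop4.1}: once the process enters the strip $C_1 := [H_0^{-1}, H_0] \times (0, \bar\delta)$ corresponding to a given $\lambda \in (0,-\bar\lambda_1)$ and small $\gamma, \eps>0$, with probability at least $1-\eps$ one has $\limsup_t (\ln y(t))/t \leq -\lambda$ together with $|u_{i_0,x_0}(t)-x_{i_0,z_0}(t)| \leq \gamma$ for all $t\geq 0$.

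The first step is a quantitative accessibility lemma: there exist $T_0>0$ and $p_0>0$ such that for every $(i,z)\in \M\times D_1$, where $D_1 := D\cap \{y\geq H_0^{-1}\}$,
\[
\PP\{(x_{i,z}(T_0),\, y_{i,z}(T_0)) \in C_1\} \geq p_0.
\]
This will follow from the support theorem for PDMPs: since $(x^*_1, y^*_1)$ is globally stable for \eqref{e5.2}, the closure of paths emanating from $(i,z)$ contains any trajectory obtained by first staying long enough in regime $1$ (to approach $(x^*_1,y^*_1)$) and then alternating the flows $\pi^1$ and $\pi^2$ for prescribed times as in the definition \eqref{e5.10}; since $\bar S$ contains a point $(\hat x, 0)$, one can steer $(x,y)$ into any neighborhood of $(\hat x,0)$, in particular into $C_1$, within a bounded time horizon. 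Compactness of $\M\times D_1$ and joint continuity of the flows provide the required uniform lower bound $p_0$. Granted this, Part 1 is handled exactly as Theorem \ref{thm2.2}. Assuming $\bar\lambda_2>0$, the argument of Proposition \ref{prop3.2} carries over (in fact it simplifies, thanks to Assumption \ref{asp5.2}) and forces the process to spend a Cesaro-positive fraction of its sampling times $\{kT_0\}$ inside $D_1$; combined with the uniform reachability, this yields some $k_0$ with $\PP\{(x(k_0 T_0), y(k_0 T_0)) \in C_1\}\geq 1-\eps$. The strong Markov property and Proposition \ref{prop5.1} then give $\PP\{\limsup_t (\ln y(t))/t \leq -\lambda\}\geq 1-O(\eps)$, hence almost surely. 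Finally, on the a.s.\ event where $|u_{i_0,x_0}(t)-x_{i_0,z_0}(t)|\to 0$ and $y(t)\to 0$, the ergodic theorem for $(\xi(t),u(t))$ applied to
\[
\frac{\ln y(t)}{t} = \frac{\ln y_0}{t} + \frac{1}{t}\int_0^t b(\xi(s), x(s), y(s))\,ds
\]
upgrades the one-sided bound to the precise limit $\bar\lambda_1$.

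For Part 2, the hypothesis $\bar S\cap\{x=0\}\neq \emptyset$ yields the mirror Proposition \ref{prop5.1} for the $y$-axis with rate $\bar\lambda_2$, and the accessibility lemma applies symmetrically to both $C_3:=[H_0^{-1},H_0]\times(0,\bar\delta)$ and $C_4:=(0,\bar\delta)\times[H_0^{-1},H_0]$. Uniform positivity of the entrance probabilities gives $p_{i_0,z_0}, q_{i_0,z_0}>0$. To obtain $p_{i_0,z_0}+q_{i_0,z_0}=1$, I mimic the proof of Theorem \ref{thm2.3}: the process cannot remain forever inside a compact subset of $\R^{2,\circ}_+$ bounded away from both axes, since from any such set the probability of hitting $C_3\cup C_4$ within time $T_0$ is at least $p_0>0$, iterating which sends the probability of never being absorbed to zero. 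The main technical obstacle is the accessibility step, where a purely topological statement ($\bar S$ touches an axis) must be converted into a quantitative, uniform-in-$(i,z)$ lower bound; this requires the support theorem for PDMPs together with continuity of the concatenation map $(t_1,\dots,t_n)\mapsto \pi_{t_n}^{\varrho(n)}\circ\cdots\circ\pi_{t_1}^{\varrho(1)}$ and a uniform lower bound on the probability that $\xi(\cdot)$ realizes a prescribed switching schedule up to tolerance on a bounded interval.
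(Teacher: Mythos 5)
Your overall architecture is the same as the paper's: show that the process returns (recurrently, or with probability close to one at some sampling time) to a thin strip $[H_0^{-1},H_0]\times(0,\bar\delta)$ near the $x$-axis, by combining (a) a recurrence statement coming from $\bar\lambda_2>0$ that keeps the process away from the $y$-axis, with (b) a uniform-in-initial-condition positive probability of steering from a compact set into that strip using the reachable set $S$; then conclude with Proposition \ref{prop5.1} and the strong Markov property. The paper does exactly this, except that it imports both ingredients from \cite{DD}: recurrence of $(\xi(t),x(t),y(t))$ relative to $\M\times D_\theta$ with $D_\theta=([\theta,H_0]\times(0,H_0])\setminus[0,H_0^{-1}]^2$ is quoted from \cite[Theorem 2.1]{DD}, and the uniform accessibility $\inf_{(i,z)\in\M\times K_\eps}\PP\{z_{i,z}(T_{U_1})\in U_1\}>0$ for the compact set $K_\eps=[\theta,H_0]\times[\eps,H_0]$ is stated to be implicit in \cite[Theorem 2.2]{DD}, rather than re-derived from a support theorem and the continuity of the concatenated flows as you propose. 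Your route is self-contained where the paper's is citation-based, but the skeleton is identical.

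There is one concrete flaw in your accessibility lemma: you claim a uniform lower bound $p_0$ over $\M\times D_1$ with $D_1=D\cap\{y\geq H_0^{-1}\}$. This set imposes no lower bound on $x$, and since both Kolmogorov flows leave the $y$-axis invariant, from a point with $x=0$ the strip $C_1\subset\{x\geq H_0^{-1}\}$ is unreachable, and as $x\downarrow 0$ the time needed for $\pi^1$ to approach the positive equilibrium $(x^*_1,y^*_1)$ blows up; hence no uniform $T_0,p_0$ exist on $D_1$ as defined. The compact set over which accessibility is uniform must be bounded away from \emph{both} axes, as in the paper's $K_\eps$; the excursions near the $y$-axis are then controlled precisely by the $\bar\lambda_2>0$ Ces\`aro argument you already invoke (this is the role of the recurrence relative to $D_\theta$, which carries the constraint $x\geq\theta$). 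With that correction your Part 1 goes through, and the same caveat applies symmetrically in Part 2, where the compact ``waiting set'' must be bounded away from both axes before the $p_0$-iteration forces eventual entry into $C_3\cup C_4$.
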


\begin{proof}
We shall only prove
claim 1 because the other one can be
 obtained with a slight modification. In view of \cite[Theorem 2.1]{DD}, if $\lambda_2>0$ then there is $\theta\in(0, H_0)$ such that for any initial value in $\M\times\R^{2,\circ}_+$, the process $(\xi(t), x(t), y(t))$ is recurrent relative to $\M\times D_\theta$  where $D_\theta=([\theta,H_0]\times(0,H_0])\setminus[0,H_0^{-1}]^2$. If  $\bar S\cap \{(x,y): y=0\}\ne\emptyset$, for any $0<\eps<H_0^{-1}$, there is $(x_1, y_1)\in S\cap \big([H^{-1}_0,H_0]\times(0,\eps)\big).$
Split $D_\theta$ into $K_\eps:=[\theta, H_0]\times[\eps, H_0]$ and $[H_0^{-1},H_0]\times(0,\eps)$.
Since $K_\eps:=[\theta, H_0]\times[\eps, H_0]$ is compact, it is proven implicitly in \cite[Theorem 2.2]{DD} that for any neighborhood $U_1$ of $(x_1, y_1)$ there is $T_{U_1}>0$ satisfying
$\inf\limits_{(i_0', z_0')\in\M\times K_\eps}\PP\{z_{i_0', z_0'}(T_{U_1})\in U_1\}>0$.
Let $U_1$ be such that $y<\eps\,\forall (x,y)\in U_1$, we claim that
$\inf\limits_{(i_0', z_0')\in\M\times K_\eps}\PP\{z_{i_0', z_0'}(T_{U_1})\in ([H_0^{-1},H_0]\times(0,\eps)\}>0$.
This estimate, combined with the recurrence relative to $\M\times D_\theta$ of $(\xi(t), x(t), y(t))$, yields that $(\xi(t), x(t), y(t))$ is  recurrent relative to $\M\times([H^{-1}_0,H_0]\times(0,\eps)$ for any initial value in $\M\times\R^{2,\circ}_+$.
In view of the strong Markov property of $(\xi(t), x(t), y(t))$ and Proposition \ref{prop5.1}, we can obtain  claim 1 of Theorem \ref{thm5.1}.
\end{proof}

\subsection{Case Study 2}
 Note that in view of Assumption \ref{asp5.1},
  there are   unique
 pairs $(u_1,u_2)$ and $(v_1, v_2)$  satisfying $a_i( u_i
, 0) = 0$ and $b_i(0, v_i)=0$
for $i=1,2$.
We now consider the case
 that each of the two species dominates a state.
  We describe this situation by the following assumption.

\begin{asp}\label{asp5.3} {\rm
$(0, v_1)$ (reps $(u_2, 0)$) is
 {a} saddle point of  system \eqref{e5.2}
(resp. \eqref{e5.3}) while $(u_1, 0)$ (resp. $(0, v_2)$) is stable.
Moreover, all positive solutions to \eqref{e5.2} (resp. \eqref{e5.3})
converge to the stable equilibrium $(u_1, 0)$ (resp. $(0, v_2)$).
}\end{asp}

By the center
 manifold theorem and the attractiveness of $(u_1, 0)$ and
$(0, v_2)$,
there exist $(x_1^\diamond, y_1^\diamond)$ and $(x_2^\diamond, y_2^\diamond)$  such that the solution to \eqref{e5.2} starting at
$(x_1^\diamond, y_1^\diamond)$ as well as the solution to \eqref{e5.3} starting at
$(x_2^\diamond, y_2^\diamond)$   can expand to the whole real line
and
\begin{equation}
\lim\limits_{t\to\infty}\pi_t^1(x_1^\diamond, y_1^\diamond)=(u_1, 0)
\mbox{ and }\lim\limits_{t\to-\infty}\pi_t^1(x_1^\diamond, y_1^\diamond)=(0, v_1)
\end{equation}
\begin{equation}
\lim\limits_{t\to\infty}\pi^2_t(x_2^\diamond, y_2^\diamond)=(0, v_2)
\mbox{ and }
\lim\limits_{t\to-\infty}\pi^2_t(x_2^\diamond, y_2^\diamond)=(u_2, 0).
\end{equation}
Denote  by $\Gamma_1$ and $\Gamma_2$ their orbits, respectively.

It is proved in \cite[Section 5]{DDY1} that for any compact set $K\subset\R^{2,\circ}_+$ and any neighborhood $U_2$ of $(x_2, y_2)\in \Gamma_1\cup\Gamma_2$, there is  $T_{U_2}>0$ such that
$\inf\limits_{(i_0', z_0')\in\M\times K}\PP\{z_{i_0', z_0'}(T_{U_2})\in ([H_0^{-1},H_0]\times(0,\eps)\}>0$.
Moreover, the closures of $\Gamma_1$ and $\Gamma_2$ have non-empty intersections with each of the two axes.
By these facts and using the argument in the proof of Theorem \ref{thm5.1}, we obtain the following result.

\begin{thm}\label{thm5.2}
Let Assumptions \ref{asp5.1}, \ref{asp5.2} and \ref{asp5.3} be satisfied.
\begin{enumerate}
\item If $\bar\lambda_1<0$, $\bar\lambda_2>0$
then $\PP\left\{\lim\limits_{t\to\infty}\dfrac{\ln y(t)}t=\bar\lambda_1\right\}=1$.
\item If $\bar\lambda_1<0$, $\bar\lambda_2<0$,
then $p_{i_0,z_0}>0, q_{i_0,z_0}>0$ and $p_{i_0,z_0}+q_{i_0,z_0}=1$
where $p_{i_0,z_0}=\PP\left\{\lim\limits_{t\to\infty}\dfrac{\ln y_{i_0,z_0}(t)}t=\bar\lambda_1\right\}$ and
$q_{i_0,z_0}=\PP\left\{\lim\limits_{t\to\infty}\dfrac{\ln x_{i_0,z_0}(t)}t=\bar\lambda_2\right\}$.
\end{enumerate}
\end{thm}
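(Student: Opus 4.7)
\noindent\textbf{Proof proposal for Theorem \ref{thm5.2}.}
The plan is to follow the template of the proof of Theorem \ref{thm5.1}, using Proposition \ref{prop5.1} as the local (near-boundary) estimate and Assumption \ref{asp5.3} together with the reachability results recalled from \cite{DDY1} as the global control mechanism. The key point is that in Theorem \ref{thm5.1} one used $\bar S\cap\{y=0\}\neq\emptyset$ (resp.\ $\bar S\cap\{x=0\}\neq\emptyset$) to drive the process into a neighborhood of one of the axes with positive probability; here the same role will be played by the heteroclinic orbits $\Gamma_1$ and $\Gamma_2$, whose closures are known to meet both axes.

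For claim (1), fix $\eps>0$ and $\lambda\in(0,-\bar\lambda_1)$. First, since $\bar\lambda_2>0$, the result in \cite{DDY1} applied as in Theorem \ref{thm5.1} gives a $\theta\in(0,H_0)$ such that $(\xi(t),x(t),y(t))$ is recurrent relative to $\M\times D_\theta$ with $D_\theta=([\theta,H_0]\times(0,H_0])\setminus[0,H_0^{-1}]^2$. Second, pick a point $(x_1,y_1)\in\overline{\Gamma_1\cup\Gamma_2}\cap(\{y=0\}\cap\{H_0^{-1}\le x\le H_0\})$, guaranteed by Assumption \ref{asp5.3} and the description of $\Gamma_1,\Gamma_2$ recalled before the theorem. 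By the reachability statement from \cite{DDY1} (``for any compact $K\subset\R^{2,\circ}_+$ and any neighborhood $U_2$ of a point of $\Gamma_1\cup\Gamma_2$ there is $T_{U_2}>0$ with $\inf_{(i_0',z_0')\in\M\times K}\PP\{z_{i_0',z_0'}(T_{U_2})\in U_2\}>0$''), applied with $K=D_\theta\cap\{y\ge\eps'\}$ and $U_2\subset[H_0^{-1},H_0]\times(0,\bar\delta)$, where $\bar\delta$ is supplied by Proposition \ref{prop5.1}, we obtain a uniform positive lower bound on the probability of entering $[H_0^{-1},H_0]\times(0,\bar\delta)$ in one bounded time window. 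Combining this with the strong Markov property and the recurrence to $\M\times D_\theta$, the hitting time of $[H_0^{-1},H_0]\times(0,\bar\delta)$ is almost surely finite. At that hitting time we apply Proposition \ref{prop5.1}: with probability $\ge 1-\eps$ one has $\limsup_{t\to\infty}t^{-1}\ln y(t)\le-\lambda$ and $|u_{i_0,x_0}(t)-x_{i_0,z_0}(t)|\le\gamma$ for all subsequent $t$. Iterating the strong Markov argument as in the proof of Theorem \ref{thm2.2}, the $\eps$ is squeezed to $0$, giving $\PP\{\limsup_t t^{-1}\ln y(t)\le-\lambda\}=1$ for every $\lambda\in(0,-\bar\lambda_1)$, hence $\PP\{\limsup_t t^{-1}\ln y(t)\le\bar\lambda_1\}=1$. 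To promote this to an equality of limits, one writes
\[
\frac{\ln y_{i_0,z_0}(t)}{t}=\frac{\ln y_0}{t}+\frac{1}{t}\int_0^t b(\xi(s),x_{i_0,z_0}(s),y_{i_0,z_0}(s))\,ds,
\]
replaces $x_{i_0,z_0}(s)$ by $u_{i_0,x_0}(s)$ using the uniform closeness $|u-x|\le\gamma$ furnished by Proposition \ref{prop5.1}, uses $y(s)\to 0$ to discard the $y$-dependence of $b$, and applies the ergodic theorem for $(\xi(t),u(t))$ together with the definition \eqref{e5.7} of $\bar\lambda_1$; letting $\gamma\downarrow 0$ concludes.

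For claim (2), the argument is symmetric. Pick points on $\overline{\Gamma_1\cup\Gamma_2}$ lying in a small neighborhood of each axis. As the diffusion is irreducible in the sense of the reachability result from \cite{DDY1}, for any initial state $(i_0,z_0)\in\M\times\R^{2,\circ}_+$ the process reaches $[H_0^{-1},H_0]\times(0,\bar\delta)$ with positive probability in finite time, and symmetrically it reaches $(0,\bar\delta)\times[H_0^{-1},H_0]$ with positive probability. Proposition \ref{prop5.1} (and its analogue with the roles of $x$ and $y$ swapped, obtained by relabeling) then produces the two lower bounds $p_{i_0,z_0}>0$ and $q_{i_0,z_0}>0$. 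To show $p_{i_0,z_0}+q_{i_0,z_0}=1$, one adapts the last paragraph of the proof of Theorem \ref{thm2.3}: fix $\eps>0$, choose a compact $C\subset\R^{2,\circ}_+$ outside which the process spends vanishing time (using Assumption \ref{asp5.2}, solutions are trapped in the compact set $[0,H_0]^2\setminus[0,H_0^{-1}]^2$, so this step is even easier here than in the diffusion case); then show using the reachability result that for every fixed $T$ large enough, $\PP\{z(T)\in([H_0^{-1},H_0]\times(0,\bar\delta))\cup((0,\bar\delta)\times[H_0^{-1},H_0])\}\ge 1-3\eps$, and conclude by the strong Markov property and Proposition \ref{prop5.1}. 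The identification of the two exponents as $\bar\lambda_1$ and $\bar\lambda_2$ on the corresponding events is exactly as in the last part of claim (1).

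The main obstacle I expect is the global control step: one must show that from any compact set in $\M\times\R^{2,\circ}_+$, the piecewise deterministic process can be steered, with a uniform positive probability in a bounded time, into the small rectangle $[H_0^{-1},H_0]\times(0,\bar\delta)$ (and its symmetric counterpart for claim 2) where Proposition \ref{prop5.1} takes over. This is not at all automatic, since the underlying flows $\pi^1$ and $\pi^2$ need not by themselves drive trajectories to a neighborhood of the $x$-axis; the existence and the location of the heteroclinic orbits $\Gamma_1,\Gamma_2$ in Assumption \ref{asp5.3}, combined with the switching of $\xi(t)$, are precisely what allow concatenations of the flows to approximate any point in $\overline{\Gamma_1\cup\Gamma_2}$, and this is where the result imported from \cite[Section 5]{DDY1} is indispensable. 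Once this control ingredient is in place, the remainder of the argument is a direct transcription of the proofs of Theorems \ref{thm2.2}, \ref{thm2.3}, and \ref{thm5.1}.
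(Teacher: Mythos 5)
Your proposal is correct and follows essentially the same route as the paper, which proves Theorem \ref{thm5.2} by citing the reachability result of \cite[Section 5]{DDY1} for neighborhoods of points on $\Gamma_1\cup\Gamma_2$, noting that the closures of $\Gamma_1$ and $\Gamma_2$ meet both axes, and then repeating the argument of Theorem \ref{thm5.1} (recurrence plus Proposition \ref{prop5.1} plus the strong Markov property, with the limit identified as in Theorems \ref{thm2.2} and \ref{thm2.3}). Your write-up is in fact more detailed than the paper's, which compresses all of this into a single sentence.
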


\section{Discussion}\label{sec:dis}
In this paper, we have provided sufficient conditions for coexistence as well as exclusion of a stochastic competitive Lotka-Volterra system \eqref{e1.2}. In fact, our conditions are very close to necessary ones.
 Only critical case when $\lambda_1=0$ or $\lambda_2=0$ has not been studied.
Let us return to \eqref{e1.2} where $B_i(\cdot), i=1,2,3$ may be correlate. To be more precise, we assume that $$(B_1(\cdot), B_2(\cdot), B_3(\cdot))^\top=A(W_1(\cdot), W_2(\cdot), W_3(\cdot))^\top,$$
where $W_i(\cdot), i=1,2,3$ are mutually independent Brownian motions and
$A$ is a constant $3\times3$ matrix with $1\leq$ rank$(A)$ $\leq 3$.
Equation \eqref{e1.2} on the $x$-axis and the $y$-axis becomes
\begin{equation}\label{e6.1}
d\varphi (t)=\varphi (t)\big(a_1-b_1\varphi (t)\big)dt+(\gamma_1\varphi(t)+\alpha_1\varphi^2 (t))dB_1(t)
\end{equation}
and
\begin{equation}\label{e6.2}
d\psi (t)=\psi (t)\big(a_2-b_2\psi (t)\big)dt+(\gamma_2\psi(t)+\alpha_2\psi^2 (t))dB_3(t),
\end{equation}
respectively.
We can verify the conditions of \cite[Theorem 3.1,
p. 447]{IW}
enables us to prove that
if $a_1-\dfrac{\gamma_1^2}2<0$, then
$\PP\{\lim\limits_{t\to\infty}\varphi(t)=0\}=1$ for all positive solutions $\varphi(t)$.
We can therefore use arguments similar to the proof of Proposition \ref{prop4.1} to show that if the initial value is close to be on the $x$-axis, the solution will converge to   the $x$-axis with an arbitrarily large probability.
In case  $a_1-\dfrac{\gamma_1^2}2>0$, \eqref{e6.1} has a unique invariant probability measure
whose density $\tilde f^*_1$ can be solved from the Fokker-Planck equation.
Define $$\tilde\lambda_1=a_2-\int_0^\infty\big(c_2\phi+\dfrac{\beta_2^2}2\phi\big)\tilde f^*_1(\phi)d\phi\text{ if }a_1-\dfrac{\gamma_1^2}2>0.$$
The value $\tilde\lambda_2$ can be defined in the same manner if $a_2-\dfrac{\gamma_2^2}2>0$.
Using our method introduced in Sections \ref{sec:coe} and \ref{sec:com} with slight modifications to treat extra terms, we can show that
if $\tilde\lambda_1, \tilde\lambda_2,>0$ then \eqref{e1.1} has an invariant probability measure in $\R^{2,\circ}_+.$
If $\tilde\lambda_1<0$, the result in Proposition \ref{prop4.1} holds for \eqref{e1.1}.
For this reason, if the diffusion in \eqref{e1.1} is nondegenerate, the results stated in Theorems \ref{thm2.1}, \ref{thm2.2}, and \ref{thm2.3} hold for \eqref{e1.1} with $\lambda_1,\lambda_2$ replaced by $\tilde\lambda_1, \tilde\lambda_2.$
The convergence to the boundary in case either $a_1-\dfrac{\gamma_1^2}2$ or $a_2-\dfrac{\gamma_2^2}2$ is negative can also obtained.
If the diffusion is degenerate, we need to investigate the Lie-algebra generated by the drift and the diffusion as well as the corresponding control system to get further results under some additional assumptions. The reader might find how generate Lotka-Volterra models of predator-prey type are treated in \cite{DDT, RR, DDY2} in light of well-known results in \cite{WK, IK, SV}.

{\color{blue}
As a special case, when $\alpha_i=\beta_i=0, \gamma_i\ne 0, i=1,2,$
\eqref{e1.1} becomes
\begin{equation}\label{e6.3}
\begin{cases}
dX(t)=X(t)\big(a_1-b_1X(t)-c_1Y(t)\big)dt+\gamma_1X(t)dB_1(t)\\
dY(t)=Y(t)\big(a_2-b_2Y(t)-c_2X(t)\big)dt+\gamma_2Y(t)dB_3(t).
\end{cases}
\end{equation}
In this case, it is easy to compute $\tilde\lambda_i, i=1,2$.
In fact,
$$
\tilde\lambda_1=a_2-\dfrac{c_2}{b_1}\left(a_1-\dfrac{\gamma_1^2}2\right)\,\text{ if }\, a_1-\dfrac{\gamma_1^2}2>0;
\,\qquad
\tilde\lambda_2=a_1-\dfrac{c_1}{b_2}\left(a_2-\dfrac{\gamma_2^2}2\right)\,\text{ if }\, a_2-\dfrac{\gamma_2^2}2>0.
$$
Assuming that $B_1(t)$ and $B_3(t)$ are independent standard Brownian motions,
applying the
results for \eqref{e1.1} to the special case \eqref{e6.3}, we have the following assertions.

\begin{thm}\label{thm6.1}
Let $Z_{z_0}(t)=(X_{z_0}(t), Y_{z_0}(t))$ be the solution to \eqref{e6.3} with initial value $z_0\in\R^{2,\circ}_+$.
Then the following assertions hold:
\begin{enumerate}[{\rm 1.}]
\item
If $a_1-\frac{\gamma_1^2}2<0$ then $X_{z_0}(t)$ converges to $0$ almost surely with an exponential rate $a_1-\frac{\gamma_1^2}2$.
\item If $a_2-\frac{\gamma_2^2}2<0$ then $Y_{z_0}(t)$ converges to $0$ almost surely with an exponential rate $a_2-\frac{\gamma_2^2}2$.
\item If $a_i-\frac{\gamma_i^2}2\geq0, i=1,2$ and $\tilde\lambda_1>0$, $\tilde\lambda_2<0$ then $X_{z_0}(t)$ converges to $0$ almost surely with an exponential rate $\tilde\lambda_2$.
\item If $a_i-\frac{\gamma_i^2}2\geq0, i=1,2$ and $\tilde\lambda_1<0$, $\tilde\lambda_2>0$ then $Y_{z_0}(t)$ converges to $0$ almost surely with an exponential rate $\tilde\lambda_1$.
\item If $a_i-\frac{\gamma_i^2}2\geq0, i=1,2$ and $\tilde\lambda_1>0$, $\tilde\lambda_2>0$ then the distribution of $Z_{z_0}(t)$ converges in total variation to an invariant probability measure on $\R^{2,\circ}_+$.
\item If $a_i-\frac{\gamma_i^2}2\geq0$ and $\tilde\lambda_i>0$, $i=1,2$ then
 for any $z_0\in\R^{2\circ}_+$, we have
$p_{z_0}>0, q_{z_0}>0$ and $p_{z_0}+q_{z_0}=1$ where
$$p_{z_0}=\PP\left\{\lim\limits_{t\to\infty} \dfrac{\ln X_{z_0}(t)}t=\tilde\lambda_2\right\}\text{ and } q_{z_0}=\PP\left\{\lim\limits_{t\to\infty} \dfrac{\ln Y_{z_0}(t)}t=\tilde\lambda_1\right\}.$$
\end{enumerate}
\end{thm}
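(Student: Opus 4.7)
The plan is to reduce each assertion either to a direct It\^o calculation (for assertions 1 and 2) or to an invocation of the extensions of Theorems \ref{thm2.1}--\ref{thm2.3} announced in the discussion above (for assertions 3--6), after first computing the threshold values explicitly in closed form.

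As a preliminary step I would pin down the boundary equation $d\varphi = \varphi(a_1-b_1\varphi)dt + \gamma_1\varphi\,dB_1$, the classical stochastic logistic. When $a_1-\gamma_1^2/2<0$ this SDE drives $\varphi$ to zero a.s.\ at exponential rate $a_1-\gamma_1^2/2$; when $a_1-\gamma_1^2/2>0$ its unique invariant density $\tilde f^*_1$ is of Gamma type, with shape $2(a_1-\gamma_1^2/2)/\gamma_1^2$ and rate $2b_1/\gamma_1^2$, hence first moment $(a_1-\gamma_1^2/2)/b_1$. Plugging this into $\tilde\lambda_1=a_2-c_2\int\phi\,\tilde f^*_1(\phi)d\phi$ (the quadratic contribution vanishes because $\beta_2=0$) recovers the closed form stated in the theorem, and symmetrically for $\tilde\lambda_2$. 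Assertions 1 and 2 are then immediate: applying It\^o's formula to $\ln X_{z_0}$ and using $b_1X, c_1Y\ge 0$ yields
$$\ln X_{z_0}(t)\le \ln x_0 + \left(a_1-\tfrac{\gamma_1^2}{2}\right)t + \gamma_1 B_1(t),$$
so the strong law of large numbers for Brownian motion gives $\limsup_{t\to\infty}t^{-1}\ln X_{z_0}(t)\le a_1-\gamma_1^2/2<0$, the claimed rate of exponential decay; assertion 2 is symmetric.

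For assertions 3 through 6 I would invoke the extensions of Theorems \ref{thm2.1}, \ref{thm2.2}, and \ref{thm2.3} to (1.1) outlined in the discussion. These apply because the diffusion matrix of \eqref{e6.3} is $\diag(\gamma_1^2 X^2,\gamma_2^2 Y^2)$, which is non-degenerate on $\R^{2,\circ}_+$. Under $a_i-\gamma_i^2/2\ge 0$ the thresholds $\tilde\lambda_i$ are precisely those found in the preliminary step. Assertion 5 is then the coexistence conclusion of the extended Theorem \ref{thm2.1}; assertions 3 and 4 come from the extended Theorem \ref{thm2.2}; and assertion 6 (which, in view of the dichotomy $p_{z_0}+q_{z_0}=1$ with both $p_{z_0},q_{z_0}>0$, must be read with $\tilde\lambda_i<0$ rather than $>0$) is the extended Theorem \ref{thm2.3}.

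The main obstacle is to verify rigorously that the machinery of Sections \ref{sec:coe} and \ref{sec:com} transfers to \eqref{e6.3}, whose noise is linear rather than quadratic in $(X,Y)$. The most delicate point is Proposition \ref{prop4.1}: the It\^o expansion of $(\varphi_x^{-1}-X_z^{-1})^2$ acquires different coefficients once the $\alpha_1$-terms disappear and new $\gamma_1$-terms enter the drift of $\varphi^{-1}$, but the dominant dissipative contribution $-2a_1(\phi^{-1}-x^{-1})^2$ and the structurally crucial small-$y$ forcing of order $y^2/\phi^2$ both survive, so the same exponential-martingale and Gronwall argument closes after adjusting constants. The a priori moment bounds of Proposition \ref{prop2.1} are in fact easier here, since linear multiplicative noise preserves $L^p$ bounds for every $p$. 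Once these cosmetic adjustments are made, the extended versions of Theorems \ref{thm2.1}--\ref{thm2.3} carry over nearly verbatim, and the six assertions follow.
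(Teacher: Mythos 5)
Your proposal is correct and follows essentially the same route as the paper: the paper gives no self-contained proof of Theorem \ref{thm6.1}, deriving it instead from the extensions of Theorems \ref{thm2.1}--\ref{thm2.3} to \eqref{e1.1} sketched in the discussion (non-degeneracy of the diffusion on $\R^{2,\circ}_+$, convergence to the boundary when $a_i-\gamma_i^2/2<0$, and the explicit form of $\tilde\lambda_i$ obtained from the Gamma-type invariant density of the linear-noise logistic equation), which is exactly your argument. Your additional observations --- the direct It\^o/SLLN treatment of assertions 1--2, which yields the stated rate as an upper bound on $\limsup_{t\to\infty}t^{-1}\ln X_{z_0}(t)$, and the reading of assertion 6 with $\tilde\lambda_i<0$ --- are consistent with, and slightly more explicit than, what the paper records.
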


This theorem recovers the main findings in \cite[Theorems 9 and 10]{LWW}.
Similar results in a slight different context can also be found in \cite{EHS}.
It indicates that our results generalizes existing ones to more complex models.

Many existing works have been devoted to studying stochastic ecological models.
However, most of them dealt with models with linear diffusion parts.
Our paper introduced a new approach to treating stochastic models with non-linear diffusion parts.
In particular, the techniques developed in this paper are suitable to treat generalizations of some existing stochastic ecological models such as
cooperative
models in \cite{LW}, predator-prey
models in \cite{DDY2, RR}, as well as food chain
models in
\cite{LB}.
It should be noted that our main idea relies on
analyzing the behavior of solutions on the boundary.
The model in this paper is two dimensional, so
we can explicitly compute the ergodic invariant probability measures on the boundary
as well as Lyapunov exponents $\lambda_1$ and $\lambda_2$.
In general, with the same idea and some modifications and developments in techniques, we can treat
stochastic models in higher dimensions.
The signs of Lyapunov exponents with respect to ergodic invariant probability measures
on the boundary determine the behavior of solutions in the interior domain.
In a higher dimension, we are in general unable to compute invariant probability measures explicitly,
so Lyapunov exponents may not be calculated explicitly.
However, they can be estimated via a numerical method.
More details would be given in the future.
}

 One may also consider a more general model with regime-switching. It means that the coefficients
 $a_i, b_i, c_i, \alpha_i, \beta_i, \gamma_i$, $i=1,2$ in \eqref{e1.1} are functions of a Markov chain $r(t)$ with finite states. We suppose that  $r(\cdot)$ is independent of $B_i(\cdot)$ $i=1,2,3$.
 If the generator of $r(\cdot)$ does not depend on the state of $Z(t)$, we can prove the existence and uniqueness of invariant probability measures on the $x$-axis and the $y$-axis.
Then, we can also define $\lambda_1,\lambda_2$ and obtain similar results without any difficulty.
However, if  the generator of $r(\cdot)$ is state-dependent
(that is, the switching depends on the diffusions),
the comparison  between solutions on the boundary and
those in the interior is much more difficult.
This deserves  more careful thoughts and consideration.

{\color{blue}Recently, stochastic ecosystems with delay have also been studied intensively (see e.g., \cite{BM, LB2} and references therein).
Although the main idea of this paper may work with delay systems, it is not easy to apply our method to those systems
directly.
The main difficulty is that we need to work with infinite dimensional function spaces
that are not locally compact.
It is thus difficult to obtain
certain uniform estimates.
It appears that
novel techniques are
needed to treat those models.
}

\end{document}